\newtheorem{lem}{Lemma}[section]
\newtheorem{thm}{Theorem}[section]
\newtheorem{prop}[thm]{Proposition}
\numberwithin{equation}{section}
\newcommand{\del}{\partial}
\newcommand{\commentout}[1]{}
\newcommand{\vpran}[1]{\left(#1\right)}
\newcommand{\R}{\mathbb{R}}
\newcommand{\VV}{\mathbb{V}}
\newcommand{\dm}{ \, {\rm d} m}
\newcommand{\X}{{\mathbb X}}
\newcommand{\dmu}{\, {\rm d} \mu}
\newcommand{\dnu}{\, {\rm d} \nu}
\newcommand{\ds}{\, {\rm d} s}
\newcommand{\norm}[1]{\left\lVert#1 \, \right\rVert}
\newcommand{\nn}{\nonumber}
\newcommand {\Chi} {{\bf \raise 2pt \hbox{$\chi$}} }
\newcommand{\CalL}{{\mathcal{L}}}
\newcommand{\CalP}{{\mathcal{P}}}
\newcommand{\CalT}{{\mathcal{T}}}
\newtheorem{rmk}{Remark}[section]
\newcommand{\Ni}{\noindent}
\newcommand{\dsigma}{\, {\rm d}\sigma}
\newcommand {\dt}{\, {\rm d} t}
\newcommand {\dv}  { {\rm d}v} 
\newcommand {\da}  { {\rm d} a} 
\newcommand{\dx}{ \, {\rm d} x}
\newcommand{\dtau}{\, {\rm d} \tau}
\newcommand{\BigO}{{\mathcal{O}}}
\newcommand{\vint}[1]{\left\langle#1\right\rangle}
\newcommand{\Disp}{\displaystyle}
\newcommand {\f}   {\frac}
\newcommand {\p}   {\partial}
\newcommand{\barint}{\kern3pt \raise3.4pt\hbox{\vrule height.6pt
    width7pt} \kern-10pt \int}
    \newcommand{\Eps}{\epsilon}
    \newcommand{\Denote}{\stackrel{\Delta}{=}}
\newcommand{\abs}[1]{\lvert#1\rvert}
\newcommand{\beq}{\begin{equation}}
\newcommand{\eeq}{\end{equation}}
\newcommand{\bea} {\begin{array}{rl}}
\newcommand{\eea} {\end{array}}
\newcommand{\bepa}{\left\{ \begin{array}{l}}
\newcommand{\eepa} {\end{array}\right.}
\newtheorem{theorem}{Theorem}[section]
\newtheorem{definition}[theorem]{Definition}
\newtheorem{remark}[theorem]{Remark}
\newtheorem{proposition}[theorem]{Proposition}
\newcommand{\QED}{{ \hfill
                       {\unskip\kern 6pt\penalty 500 \raise -2pt\hbox{\vrule\vbox to 6pt{\hrule width 6pt
                       \vfill\hrule}\vrule} \par}   }}
\title[Large Gradient]{Macroscopic Limits of pathway-based kinetic models for E.coli chemotaxis in large gradient environments}
\author{Weiran Sun}
\address{Department of Mathematics, Simon Fraser University, 8888 University Dr., Burnaby, BC V5A 1S6, Canada}
\email{weirans@sfu.ca}
\author{Min Tang}
\address{Department of mathematics and Institute of natural sciences , 
Shanghai Jiao Tong University, Shanghai, 200240, China.}
\email{tangmin@sjtu.edu.cn}
\begin{document}

\maketitle


\begin{abstract}
It is of great biological interest to understand the molecular origins of chemotactic behavior of E. coli by developing population-level models based on the underlying signaling pathway dynamics. 
 We derive macroscopic models for E.coli chemotaxis that match quantitatively with the agent-based model (SPECS) for all ranges of the spacial gradient, in particular when the chemical gradient is large such that the standard Keller-Segel model is no longer valid. These equations are derived both formally and rigorously as asymptotic limits for pathway-based kinetic equations. We also present numerical results that show good agreement between the macroscopic models and SPECS.  Our work provides an answer to the question of how to determine the population-level diffusion coefficient and drift velocity from the molecular mechanisms of chemotaxis,
for both shallow gradients and large gradients environments.

\end{abstract}

\bigskip

\noindent {\bf Key words:}  kinetic-transport equations;  chemotaxis; asymptotic analysis; run and tumble; biochemical pathway;
\\[3mm]
\noindent {\bf Mathematics Subject Classification (2010):} 35B25; 82C40; 92C17

\section{Introduction}
The movement of Escherichia coli (E. coli) presents an pattern of alternating forward-moving runs and reorienting tumbles. 
The run-and-tumble movements can be described by
a Boltzmann type velocity jump model \cite{HO,HP09}. 
It responds to external chemical signals by a biased random walk process.  In order to develop quantitative and predictive models, we have to first understand the response of bacteria to signal changes which is a sophisticated chemotactic signal transduction pathway. Fortunately, modern experimental technologies have enabled people to quantitatively measure the details of the E.coli chemotactic sensory system \cite{CSL,hazel,SB,SWOT}.  The response of E.coli to signal changes includes two steps: excitation and adaptation. Excitation is a rapid response of the cell to the external signal. It is due to the biochemical pathways regulating the flagellar motors. The slow adaptation allows the cell to subtract out the background signal.
It is carried out by the relatively slow receptor methylation and demethylation processes that modulate the methylation level of receptors \cite{E, tu,STB}.

It is possible to develop predictive agent-based models thanks to the understanding of the intracellular signalling pathway. However, direct computation of agent-based models is extremely time consuming when large number of cells are evolved. Moreover, the results are usually shown to be noisy  \cite{JOT}. Therefore, it is of great biological interest to understand the molecular origins of chemotactic behaviour of E. coli by developing population-level model based on the underlying signalling pathway dynamics.

In order to establish quantitative connections between the agent-based models and population-level models, the usual strategy is to use mesoscopic kinetic-transport equations and derive their macroscopic limits \cite{TMPA}.
There are two different classes of kinetic-transport
models for E.coli chemotaxis in the literature. One heuristically includes tumbling frequencies depending on the path-wise gradient of chemotactic signals, while the other takes into account an intra-cellular molecular biochemical pathway and relates the tumbling frequency to this information \cite{PTV}. It is possible to rescale both type of kinetic-transport models and study their diffusion and hyperbolic limits as in \cite{CMPS,DS,HO,HO1,STY,LTY}.

In most previous work, in order to take into account the effects of the internal signal pathway and derive macroscopic models, moments of the 
internal state are used \cite{ErbanOther04,STY, SWOT,XO,X,XY}.
The derivation is based on moment closure techniques. 
The moment system is usually closed by the assumption that the deviation of the internal state is not far from its expectation. This assumption is only valid when the chemical gradient is small. Macroscopic models are then derived by various asymptotic limits of the closed moment system. For example, the Keller-Segel model can be considered as a diffusion limit of the first-order moment closure, which assumes that the internal states of all bacteria are concentrated at their expectation. In \cite{ErbanOther04,XO} the authors derived the macroscopic Keller-Segel equation from agent-based models by incorporating a toy linear model for the intracellular signal transduction pathways. Recently, more complicated real intracellular signalling networks that are intrinsically nonlinear are considered \cite{STY,X}. 
Another model is introduced in \cite{SWOT}, where the authors developed a pathway-based mean field theory (PBMFT). This theory is used to explain the counter-intuitive experiment which shows the mass centre of the cells does not follow the dynamics of ligand concentration in a spatial-temporal fast-varying environment \cite{ZSOT}. PBMFT can be considered as a hyperbolic limit of the second order moment closure system \cite{STY}. However, compared with the agent-based simulations, PBMFT only recovers the right behavior for the mass centre in the spatial-temporal fast-varying environment and does not give a good match of the detailed dynamics of bacteria space distribution.  One remedy proposed in \cite{XY} is to use a fourth-order moment system. 
Since higher-order moment systems include more information about the internal state distribution, it is reasonable to expect that they can yield better approximations. In fact, the larger the path-wise gradient is, the wider the distribution of the internal state spreads. Therefore more moments should be included. However, it is not yet fully understood what the correct number of moments one should choose to obtain an accurate approximation. This depends on how fast the environment varies, i.e. how large the space gradient is and how quickly the signal changes.

In this paper, instead of using moment closure in the internal state, we derive both formally and rigorously macroscopic models as asymptotic limits of pathway kinetic equations for E.coli chemotaxis.
We also show numerically that so-derived macroscopic models match quantitatively with the agent-based model {\it for all ranges of chemical gradients}. 

The pathway kinetic model we consider contains both individual bacteria movement by run-and-tumble and an intra-cellular molecular content  \cite{SWOT}.
This equation governs the evolution of the probability density function $p(x,v,m,t)$ of bacteria at time $t$, position $x \in\R^d$, velocity $v \in \VV$, and methylation level $m >0$. In this paper, we will restrict ourselves to the discrete kinetic equation where $x \in \R^1$ and 
\begin{align*}
   \VV = \{-v_0, v_0\} \,,
\qquad 
   \dv = \frac{1}{2} \vpran{\delta(v - v_0) + \delta(v + v_0)} \,.
\end{align*} 
Here $v_0 > 0$ is the fixed speed. The general form of the kinetic equation is
\begin{align} \label{eq:kinetic-m} 
  \del_t p + v \cdot \nabla_x p + \del_m [f(m,M)p] 
  &= Q[m,M](p) \,,
\end{align}
where $M(x,t)$ is the methylation level at equilibrium which relates to the extra-cellular chemical signal.
The function $f(m,M)$ describes the intracellular adaptation dynamics that gives the evolution of the methylation level. The tumbling term $Q[m,M](p)$ satisfies
\begin{align} \label{def:Q}
 Q[m,M](p)
 =\int_\VV  \left[
     \lambda (m, M, v, v') p(t, x, v', m)  
     - \lambda (m, M, v', v) p(t, x, v, m) \right] \dv',
\end{align}
where $\lambda(m, M, v, v')$ denotes the methylation dependent tumbling frequency from $v'$ to $v$, in other words the response of the cell depending on its environment and internal state. The methylation level $M(x,t)$ is related to the extra-cellular attractant profile $S$ by a logarithmic dependency such that
$$M = M(S) = m_0 + \frac{f_0(S)}{\alpha_0}, 
\qquad\mbox{with}\quad
   f_0(S) = \ln \biggl(\frac{1+S/K_I}{1+S/K_A}\biggr) \,.$$
The constant $m_0$ is a reference methylation level in the absence of signal and
the constants $K_I$ and $K_A$ represent the dissociation constants for inactive and active receptors respectively. They satisfy the relation that $K_I \ll S \ll K_A$. Therefore, $f_0(S)  \approx \ln (S/K_I)$. In this paper, we will use
\begin{align*}
   f_0(S) = \ln (S/K_I) \,,
\qquad
   S = S_0e^{\int_0^xG(x')\dx'} \,.
\end{align*}
Then the gradient of $M$ simplifies to 
\begin{align} \label{cond:del-M}
    \del_x M = G/\alpha_0 \,. 
\end{align}
We will consider the the case when $G$ is uniform in space, which is the exponential environment as in the experiment in 
 \cite{kalinin}.

As in \cite{SWOT}, we assume that the tumbling frequency $\lambda$ is independent of $v$ and $v'$. Moreover, the specific forms of the intracellular dynamics and the tumbling frequency are given by 
\begin{align} \label{assump:f-Lambda}
   f \big(m - M\big) 
   = F_0(a)
   = k_R (1 - a/a_0) \,,
\quad 
     \lambda(m,M,v,v')
     = Z(a)
     = z_0 + \tau_0^{-1} \left(\frac{a}{a_0} \right)^H, 
\end{align}
where $a(m-M(S))$ is the receptor activity that
 depends on the intracellular
methylation level $m$ and the extracellular chemoattractant
concentration $S$ in the way that 
\begin{align}  \label{def:a-m}
    a=\bigl(1+\exp(NE)\bigr)^{-1} \,,
\qquad\mbox{with } 
    E=-\alpha_0 (m-m_0)+f_0(S)= - \alpha_0 (m-M(S)) .
\end{align}
Here the coefficient $N$ represents the number of tightly coupled receptors. 
The parameter $k_R$ is the methylation rate, $a_0$ is the receptor preferred activity. The parameters $z_0$, $H$, $\tau_0$ in the tumbling frequency represent the rotational diffusion, the Hill coefficient of flagellar motors response curve, and the average run time respectively. All these parameters can be measured biologically. For more details about the derivation of these formalisms and physical meanings of these parameters, we refer the reader to \cite{SWOT} and the references therein.

The most widely used macroscopic (or population-level) model is the Keller-Segel equation, which was first introduced in \cite{Patlak}. Later, Keller and Segel used it to model chemotaxis behavior of bacteria and cells \cite{KS1,KS2}. It reads
 $$
 \partial_t\rho=\nabla\cdot(D\nabla\rho-\kappa\rho\phi(\nabla S)).
 $$
The fundamental question of how to determine $D$ and $\phi(\nabla S)$ from the molecular mechanisms of chemotaxis has been studied in \cite{SWOT,STY,X} using~\eqref{eq:kinetic-m},
where the molecular origins of the logarithmic sensitivity \cite{kalinin} of the E. coli chemotaxis is justified for slowly varying environment.
However, as pointed out in \cite{SWOT}, both Keller-Segel equation and BPMFT fail to give the right average drift velocity in the exponential environment when the chemical gradient becomes large. 
The valid macroscopic equation that can match quantitatively with the agent-based model for large gradient environment has been open since then and this is what we want to address in this paper. In particular, we give an answer to the question of how to determine the population level drift velocity from the molecular mechanisms of chemotaxis, {\it for all ranges of chemical gradients}. It is shown that for large chemical gradients, the leading-order macroscopic equations are hyperbolic, therefore the diffusion term is of higher order compared with the advection term. When the chemical gradient decreases, the leading-order macroscopic equation becomes the standard Keller-Segel equation with the same diffusion and advection coefficient as in \cite{STY}.

The rest of the paper is organized as follows.
In Section 2, we use asymptotic analysis to formally derive the leading-order macroscopic equations from~\eqref{eq:kinetic-m}.
Quantitative agreement of the distribution function as well as the drift velocity of the agent based simulation and our analytical results are numerically shown in Section 3. In Section 4, we introduce various scalings to~\eqref{eq:kinetic-m} and rigorously show the convergence of the kinetic model to the macroscopic models derived in Section 2. We then conclude in Section 5.

\section{Formal Asymptotics}\label{sec:asymptotics}

In this section we formally derive the leading-order macroscopic equations from the kinetic equation~\eqref{eq:kinetic-m}.
Both the leading order distribution and the chemotaxis drift velocity will be derived explicitly.

Throughout this paper we will use the notation
\begin{align*}
   \vint{\vint{F}}_{v, a} = \int_0^1\int_\VV \f{F}{N\alpha_0a(1-a)}\dv\da \,,
\qquad
   \vint{F}_{v} = \int_\VV F\dv \,,
\end{align*}
for any function $F$ which makes sense of the above integrals. 

We start with reformulating equation~\eqref{eq:kinetic-m}. Since the adaptation rate $f$ is in a particularly simple form in the receptor activity $a$, we re-write equation~\eqref{eq:kinetic-m} in $(t, x, v, a)$. To this end, let $q(t,x, v, a) = p(t, x, v, m)$. Then $q$ satisfies 
\begin{align*}
   \del_t q + v \cdot \nabla_x q 
   + (\del_a q) (D_t M) \frac{\del a}{\del M} 
   + \frac{\del a}{\del m} \del_a (F_0(a) q)
   = Z(a) \CalL q_\Eps\,,
\end{align*}
where $F_0(a)$ is defined in~\eqref{assump:f-Lambda} and by \eqref{def:a-m}, we have
\begin{align} \label{def:L}
 \frac{\del a}{\del M} 
    = - \frac{\del a}{\del m}
    = - N \alpha_0 a (1 - a) \,,\qquad
   \CalL q_\Eps = \int_\VV (q(t, x, v', a) - q(t, x, v, a)) \dv' \,.
\end{align} 
Therefore, the $q$-equation becomes
\beq \label{eq:q}
\begin{aligned}
  & \del_t q + v \cdot \nabla_x q 
   + N \alpha_0 a(1 - a) \del_a \vpran{\vpran{- D_t M + k_R \vpran{1 - \frac{a}{a_0}}} q}
   = Z(a) \CalL q_\Eps \,.
\end{aligned}
\eeq
The weighted average of $q$ in $a$ satisfies
\begin{align*}
    \bar q(t, x, v) 
&    = \int_0^1 q(t, x, v, a) \frac{\del m}{\del a} \da 
    = \int_0^1 q(t, x, v, a) \frac{1}{N \alpha_0 a(1-a)} \da 
\\
&    = \int_{-\infty}^\infty p(t, x, v, m) \dm
    = \bar p(t, x, v) \,.
\end{align*}
%
%
Furthermore, by the assumption \eqref{cond:del-M}, the $q$-equation simplifies to
\begin{align} \label{eq:q-1}
   \del_t q + v \del_x q 
   + N \alpha_0 a(1 - a) \del_a \vpran{\vpran{- \f{v}{\alpha_0} G + k_R \vpran{1 - \frac{a}{a_0}}} q}
   =Z(a) \CalL q_\Eps \,,
\end{align}
with $a_0 = 1/2$ as in \cite{SWOT}. 


To perform the asymptotic analysis, we
introduce the small parameter $\Eps > 0$ and rescale equation~\eqref{eq:q-1} as
\begin{align} \label{eq:q-1-scaled}
   \Eps^\beta \del_t q_\Eps + \Eps v \del_x q_\Eps 
   + N \alpha_0 a(1 - a) \del_a \vpran{\vpran{-\f{ v}{\alpha_0} G + k_R \vpran{1 - \frac{a}{a_0}}} q_\Eps}
   = Z(a) \CalL q_\Eps \,
\end{align}
with $\beta \in [1,2]$. 
Denote the total density and the density flux as $\rho_\Eps$ and $J_\Eps$ such that
\begin{align*}
    \rho_\Eps(x,t)
    =\vint{\vint{q_\epsilon}}_{v,a} \,,
\qquad
    J_\Eps(x,t)
    =\vint{\vint{vq_\epsilon}}_{v,a} \,.
\end{align*}
Then $\rho_\Eps, J_\Eps$ satisfy the macroscopic equation
\begin{align}\label{eq:macro}
    \epsilon^{\beta-1}\p_t\rho_{\epsilon}+ \p_xJ_\epsilon=0 \,.
\end{align}

The main part for the asymptotic analysis is to show how to close equation~\eqref{eq:macro}. The idea is to use the leading-order distribution given by 
the ODE
\begin{align} \label{eq:q-0-1}
   N \alpha_0 a(1 - a) \del_a \vpran{\vpran{- \f{v}{\alpha_0} G + k_R \vpran{1 - \frac{a}{a_0}}} q_0}
   = Z(a) \CalL q_0 \,.
\end{align}
The solution to the above ODE can be found explicitly and we have the following proposition:
\begin{proposition} \label{prop:leading-order}
Suppose the velocity space is discrete such that 
\begin{align*}
    \VV = \{v_0, -v_0\} \,, 
\qquad
    \dv = \frac{1}{2} \vpran{\delta(v - v_0) + \delta(v + v_0)} \,.
\end{align*}
Let $q_0$ be a probability density function and denote
\begin{align} 
   q_0^+ = q_0(t, x, v_0, a) \,,
\qquad
   q_0^- = q_0(t, x, -v_0, a) \,, \nn
\\[2pt]
    g=\frac{v_0}{k_R} \f{G}{\alpha_0}\,,\qquad
      a_1 = \frac{1 - g}{2} \,,
\qquad
   a_2 = \frac{1 + g}{2} \,. \label{def:g-a-1-2}
\end{align}

\Ni (a) If $g > 1$, then $a_1 < 0 < 1 < a_2$. In this case the solution to \eqref{eq:q-0-1}  in the space of probability measures has the form 
\begin{align}
   q_0^+(t, x, a)
   = \rho_0(t, x) c_0 \frac{\frac{1}{2} - a_1}{a - a_1} \, 
      \exp\vpran{\frac{1}{4 N \alpha_0 k_R} 
           \int_{1/2}^a \frac{Z(\tau)}{\tau (1-\tau)} \frac{2\tau - 1}{(a_1 - \tau) (a_2 - \tau)} \dtau} \,
            \Denote \rho_0 Q_0^+ \,,
    \label{soln:q-0-plus-1}
\\
    q_0^-(t, x, a)
   = \rho_0(t, x) c_0 \frac{\frac{1}{2} - a_1}{a_2 - a} \, 
      \exp\vpran{\frac{1}{4 N \alpha_0 k_R} 
           \int_{1/2}^a \frac{Z(\tau)}{\tau (1-\tau)} \frac{2\tau - 1}{(a_1 - \tau) (a_2 - \tau)} \dtau} \,
           \Denote \rho_0 Q_0^- \,.
    \label{soln:q-0-minus-1}
\end{align}
where $\rho_0$ is a probability measure  and $c_0 > 0$ is determined by the normalization condition
\begin{align} \label{cond:normalization}
    \int_\R \int_0^1 \frac{q^+_0 + q^-_0}{2N \alpha_0 a(1-a)} \da \dx = 1 \,.
\end{align}

\Ni (b) If $0 < g < 1$, then $0 < a_1 < \frac{1}{2} < a_2 < 1$. In this case the solution to \eqref{eq:q-0-1}  in the space of probability measures has the same form as in~\eqref{soln:q-0-plus-1} and~\eqref{soln:q-0-minus-1} for $a \in (a_1, a_2)$ and
\begin{align*}
    q_0^+ = q_0^- = 0 \,,
\qquad
    a \in [0, a_1) \cup (a_2, 1] \,.
\end{align*}

\Ni (c) If $g=0$, then the solution to \eqref{eq:q-0-1}  in the space of probability measures is 
\begin{align*}
    q_0^+ = q_0^- = \frac{N \alpha_0}{4} \rho_0(t, x) \delta_{1/2}(a) \,, 
\end{align*}
where $\rho_0$ is a probability measure such that the normalization condition~\eqref{cond:normalization} holds. 
\end{proposition}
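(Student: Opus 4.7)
My plan is to reduce \eqref{eq:q-0-1} to a scalar linear ODE in $a$, solve it explicitly, and then verify that the resulting formula makes sense as a probability measure in each regime of $g$. Since $\VV=\{v_0,-v_0\}$ with equal weights, setting $q_0^\pm(t,x,a):=q_0(t,x,\pm v_0,a)$ collapses the collision term to $\CalL q_0^\pm = \tfrac12(q_0^\mp - q_0^\pm)$. With $a_0=1/2$ the drift coefficient factors as $\mp 2k_R(a-a_{1,2})$ by the definitions of $g,a_1,a_2$, so \eqref{eq:q-0-1} becomes the coupled system
\begin{align*}
-2N\alpha_0 k_R\, a(1-a)\,\partial_a\bigl[(a-a_1) q_0^+\bigr] &= \tfrac12 Z(a)(q_0^- - q_0^+),\\
\phantom{-}2N\alpha_0 k_R\, a(1-a)\,\partial_a\bigl[(a_2-a) q_0^-\bigr] &= \tfrac12 Z(a)(q_0^+ - q_0^-).
\end{align*}

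Adding the two lines produces the identity $\partial_a\bigl[-(a-a_1)q_0^+ + (a_2-a)q_0^-\bigr]=0$, so the bracketed ``flux'' is constant in $a$ on each connected component of the support. For a probability-measure solution this constant must vanish, giving the algebraic relation $(a-a_1)q_0^+=(a_2-a)q_0^-$. Substituting back and using $a_1+a_2=1$ to rewrite $q_0^--q_0^+ = \tfrac{2a-1}{a_2-a}\,q_0^+$, the quantity $u(a):=(a-a_1)q_0^+$ solves the scalar ODE
$$\frac{\partial_a u}{u} = \frac{Z(a)(2a-1)}{4N\alpha_0 k_R\, a(1-a)(a_1-a)(a_2-a)}.$$
Integrating from the regular base point $a=1/2$ and identifying $q_0^+(1/2)=\rho_0(t,x)\,c_0$ yields \eqref{soln:q-0-plus-1}; \eqref{soln:q-0-minus-1} then follows from the flux relation together with the identity $(1/2-a_1)=(a_2-1/2)=g/2$.

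Case (a) with $g>1$ is then immediate: $a_1<0$ and $a_2>1$, so the integrand in the exponent is smooth on $[0,1]$, both $q_0^\pm$ are bounded and positive there, and $c_0$ is uniquely fixed by \eqref{cond:normalization}. In case (b) with $0<g<1$, both $a_1,a_2$ lie inside $(0,1)$ and the integrand develops simple poles at these points. A residue computation (using $Z(a_j)>0$ and the signs of $(2a_j-1)$ and $(a_2-a_1)$) shows $\exp(\int\cdot)$ vanishes like $|a-a_j|^{\gamma_j}$ for some $\gamma_j>0$, which absorbs the $1/(a-a_1)$ and $1/(a_2-a)$ prefactors and keeps $q_0^\pm$ integrable against the weight $1/(2N\alpha_0 a(1-a))$ on $(a_1,a_2)$. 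Outside $(a_1,a_2)$ the same exponent diverges to $+\infty$, so any nontrivial continuation violates integrability; setting $q_0^\pm=0$ there is consistent because $(a-a_1)q_0^+$ and $(a_2-a)q_0^-$ both approach $0$ from inside, preserving the flux identity across $a_j$.

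The most delicate case is (c). When $g=0$ the roots coalesce at $1/2$, the system decouples by symmetry $(q_0^+=q_0^-=:q_0)$, and the reduced equation becomes $\partial_a[(1-2a)q_0]=0$. Since $1/(1-2a)$ is not integrable near $a=1/2$, no classical positive density solves this; the only normalizable nonnegative distributional solution is $q_0=\alpha\rho_0\delta_{1/2}$, and $\alpha=N\alpha_0/4$ is fixed by \eqref{cond:normalization}. The main technical hurdle, common to cases (b) and (c), is justifying rigorously that the flux constant must vanish and interpreting the equation distributionally across the singular or degenerate points; once that is in hand, the remaining work is a routine integrating-factor calculation.
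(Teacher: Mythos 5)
Your overall route is the same as the paper's: pass to the $\pm$ system, add the two equations to find that $(a_1-a)q_0^+ + (a_2-a)q_0^-$ is constant in $a$, argue the constant vanishes, reduce to a scalar ODE for $(a-a_1)q_0^+$, and integrate from the base point $a=1/2$; the resulting formulas agree with \eqref{soln:q-0-plus-1}--\eqref{soln:q-0-minus-1}, and your pole analysis at $a_1,a_2$ in case (b) is consistent with the paper's decay exponents. However, there are genuine gaps. First, the step you explicitly defer -- ``the flux constant must vanish'' -- is not a technicality to postpone but the crux of the classification, and it has a short proof you should supply: in case (a) a nonzero constant forces one of $Q_0^{\pm}$ to be bounded below by a positive constant near $a=0$ or $a=1$, which is incompatible with \eqref{cond:normalization} because the weight $1/(a(1-a))$ is non-integrable at the endpoints; in cases (b) and (c) the bracket is nonnegative on $(0,a_1)$ and nonpositive on $(a_2,1)$ (respectively $(1-2a)(q_0^++q_0^-)$ changes sign across $1/2$), so the constant is zero. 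Note that in case (b) this same sign argument immediately gives $q_0^\pm=0$ off $[a_1,a_2]$, which is cleaner than your continuation/integrability sketch: the formula based at $1/2$ does not even extend across the non-integrable poles at $a_1,a_2$, so ``the same exponent diverges to $+\infty$ outside'' is not literally meaningful, and on the outer intervals the ODE has its own independent solutions that must be ruled out by sign or integrability.

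Second, since the solution is sought in the space of probability measures, in case (b) the identity $(a-a_1)q_0^+=(a_2-a)q_0^-$ determines $q_0^-$ only up to an atom $c_2\,\delta_{a_2}$ (and $q_0^+$ up to an atom at $a_1$), because multiplication by the vanishing factor annihilates such atoms; your argument never excludes them, yet excluding them is precisely what separates case (b) from the genuinely atomic case (c). The paper does this by noting that $(a_1-a)q_0^+$ and $(a_2-a)q_0^-$ are BV/Lipschitz, so the two ODEs force $q_0^- - q_0^+\in L^\infty$, which forbids point masses at $a_1,a_2$. Third, in case (c) you \emph{assume} $q_0^+=q_0^-$ ``by symmetry,'' whereas the proposition asserts uniqueness among probability measures, so symmetry must be derived: from the zero flux you get $(1-2a)(q_0^++q_0^-)=0$, hence by nonnegativity each of $q_0^\pm$ is a multiple of $\delta_{1/2}$, and plugging back into either equation forces the two weights to be equal (the paper instead multiplies by $(1-2a)$ and uses a sign/boundary-condition argument to conclude $(1-2a)q_0=0$). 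With these three points filled in, your argument coincides with the paper's proof.
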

Before showing the details of the proof of the above proposition, we derive the formal closures for~\eqref{eq:macro} by using Proposition \ref{prop:leading-order}.
\subsection{Formal Asymptotic Limits}
We will divide the analysis according to $g = \BigO(1)$ and $g= o(1)$. The difference between these two ranges is that in the former case, we only the leading-order distribution
is used, while in the latter case we need to use the next-order correction as well. 

\subsection*{Case I: $g = \BigO(1)$}
In this case we formally decompose $q_\Eps$ according to the orders of $\Eps$ such that
\begin{align*}
    q_\Eps = q_0 + \Eps q_1 + \cdots \,.
\end{align*}
 By matching the terms in~\eqref{eq:q-1-scaled}, we derive that the leading-order term $q_0$ satisfies the ODE~\eqref{eq:q-0-1}. 
Let $\beta=1$ and close equation~\eqref{eq:macro} by its leading-order approximation $q \approx q_0$. Then \eqref{eq:macro} becomes
\begin{align*}
   \del_t \int_0^1 \frac{q_0^+ + q_0^-}{2N \alpha_0 a(1-a)} \da 
   +  \del_x \int_0^1 \frac{v_0(q_0^+ - q_0^-)}{2N \alpha_0 a(1-a)} \da
  = 0 \,.
\end{align*}
Therefore, from \eqref{soln:q-0-plus-1}, \eqref{soln:q-0-minus-1}, $\rho_0$ satisfies the transport equation
\begin{align} \label{eq:macro-1}
   \del_t \rho_0
   + \del_x  (\kappa_1 \rho_0)  = 0 \,,
\quad
   \text{for $g > 1$} \,,
\end{align}
and 
\begin{align} \label{eq:macro-2}
   \del_t \rho_0
   + \del_x  (\kappa_2 \rho_0)  = 0 \,,
\quad
   \text{for $0 < g < 1$} \,,
\end{align}
where if $g > 1$ then the transport speed is
\begin{align}
   \kappa_1 
&   = v_0 \vpran{\int_0^1\frac{Q_0^+ - Q_0^-}{a(1-a)} \da}
\Big/
\vpran{\int_0^1\frac{Q_0^+ + Q_0^-}{a(1-a)} \da}
\label{eq:kappa1}
\\
 &  = \frac{v_0}{a_2-a_1} \int_0^1 \frac{1 - 2a}{a(1-a)} 
      \exp\vpran{\frac{1}{4 N \alpha_0 k_R} 
           \int_{1/2}^a \frac{Z(\tau)}{\tau (1-\tau)} \frac{2\tau - 1}{(a_1 - \tau) (a_2 - \tau)} \dtau} \da \,,\nonumber
\end{align}
and if $0 < g < 1$, then the transport speed is
\begin{align}
   \kappa_2 
&  = v_0 \vpran{\int_{a_1}^{a_2}\frac{Q_0^+ - Q_0^-}{a(1-a)} \da}
\Big/
\vpran{\int_{a_1}^{a_2}\frac{Q_0^+}{a(1-a)} \da}
\label{eq:kappa2}
\\
 & =\frac{v_0}{a_2-a_1} \int_{a_1}^{a_2} \frac{1 - 2a}{a(1-a)} 
      \exp\vpran{\frac{1}{4 N \alpha_0 k_R} 
           \int_{1/2}^a \frac{Z(\tau)}{\tau (1-\tau)} \frac{2\tau - 1}{(a_1 - \tau) (a_2 - \tau)} \dtau} \da \,.\nonumber
\end{align}
Here $a_1, a_2$ are defined in~\eqref{def:g-a-1-2} and $Q_0^\pm$ are defined in~\eqref{soln:q-0-plus-1}-\eqref{soln:q-0-minus-1}. 

\subsection*{Case II: $g = o(1)$} 
%
%
%
%
%
To be precise, we consider the case where $g = \BigO(\Eps^\mu)$ with $0<\mu \leq 1$. Let
\begin{align}\label{eq:Gmu}
    G_\mu = G /\Eps^\mu = \BigO(1) \,,
\qquad
    g_\mu = \frac{v_0G_\mu}{k_R\alpha_0} =  \BigO(1) \,.
\end{align}
Equation~\eqref{eq:q-1-scaled} becomes 
\begin{align} \label{eq:q-3-scaled}
   \Eps^{1+\mu} \del_t q_\Eps + \Eps v \del_x q_\Eps 
   + N \alpha_0 a(1 - a) \del_a \vpran{\vpran{- \Eps^\mu v  \f{G_\mu}{\alpha_0} + k_R \vpran{1 - 2a}} q_\Eps}
   = Z(a) \CalL q_\Eps \,,
\end{align}
where $\CalL$ is defined in~\eqref{def:L}. Formally decompose $q_\Eps$ as
\begin{align}\label{eq:qepsmu}
    q_\Eps = q_0 + \Eps^\mu q_1 + o(\Eps^\mu) \,.
\end{align}
Then the leading-order equation is when $g=0$ in \eqref{eq:q-0-1}, which yields 
\begin{align} \label{soln:q-0-case-3}
    q_0(t, x, v, a) = \frac{N \alpha_0}{4}\rho_0(t, x) \delta_{1/2}(a) .
\end{align}
Meanwhile, if we denote the first few orders of $q_\Eps$ up to $\BigO(\Eps)$ as $\tilde q_\Eps$, then $\tilde q_\Eps$ satisfies the equation
\begin{align} \label{eq:tilde-q-Eps-2}
 \Eps v \del_x \tilde q_\Eps  + N \alpha_0 a(1 - a) \del_a \vpran{\vpran{- v \Eps^\mu G_\mu + k_R \vpran{1 - 2a}} \tilde q_\Eps}
   = Z(a) \CalL \tilde q_\Eps \,.
\end{align}
For any fixed $\Eps$ small enough, we have $0< a_1 < 1/2 < a_2 < 1$. Thus $\tilde q_\Eps$ is compactly supported on $[a_1, a_2] = [1/2 - \Eps^\mu g_\mu, 1/2 + \Eps^\mu g_\mu]$. 
Therefore, we rewrite equation~\eqref{eq:tilde-q-Eps-2} as
\begin{align} \label{eq:q-2-scaled-1}
   \Eps v \del_x \tilde q_\Eps
   + N \alpha_0 a(1 - a) \del_a \vpran{\vpran{- v \Eps^\mu G_\mu 
      + k_R \Eps^\mu \frac{1-2a}{\Eps^\mu}} \tilde q_\Eps}
   = Z(a) \CalL \tilde q_\Eps \,,
\end{align}
where the term $\Disp\frac{1-2a}{\Eps^\mu}$ is uniformly bounded in $\Eps$. 

Now we separate the two cases where $0 < \mu < 1$ and $\mu = 1$. 

\medskip

\Ni $\bullet$ First, if $0 < \mu < 1$, then by matching the terms at the leading order in~\eqref{eq:q-2-scaled-1}, we obtain the equation for $q_1$ as
\begin{align*}
      N \alpha_0 a(1 - a) \del_a \vpran{\vpran{- v G_\mu 
      + k_R \frac{1-2a}{\Eps^\mu}} q_0}
   = Z(a) \CalL q_1 \,.
\end{align*}
By \eqref{soln:q-0-case-3} this simplifies to 
\begin{align} \label{eq:q-1-case-3-1}
    - \frac{v G_\mu}{Z(a)} N \alpha_0 a(1 - a) \del_a q_0(t, x, a)
   = \CalL q_1 \,.
\end{align}
Since the desired term is the flux term $J_\Eps = \vint{\vint{vq_1}}_{v,a}$, we multiply $v$ to equation~\eqref{eq:q-1-case-3-1} and integrate in $v$. This gives
\begin{align} \label{eq:q-1-case-3}
    - \frac{v_0^2 G_\mu}{Z(a)} N \alpha_0 a(1 - a) \del_a q_0(t, x, a)
   = \vint{v q_1}_v \,.
\end{align}
Therefore the flux term is computed as 
\begin{align*}
   \int_0^1\vint{v q_1}_v \frac{1}{N \alpha_0 a(1-a)} \da
   = -\int_0^1 \frac{v_0^2 G_\mu}{Z(a)} \del_a 
        q_0 \da
   = \frac{N\alpha_0}{4}\rho_0(t, x) v_0^2 G_\mu \vpran{\frac{1}{Z(a)}}' \Big|_{a = 1/2} \,.
\end{align*}
Let
\begin{align} \label{def:kappa-3}
    \kappa_3 = \frac{N\alpha_0}{4}v_0^2 G_\mu \vpran{\frac{1}{Z(a)}}' \Big|_{a = 1/2} \,.
\end{align}
Then the moment closure has the form
\begin{align} \label{eq:macro-2}
    \del_t \rho_0 + \del_x (\kappa_3 \rho_0) = 0 \,.
\end{align}

\Ni $\bullet$ Now we consider the case where $\mu = 1$. In this case the only difference is equation~\eqref{eq:q-1-case-3} has an addition term from the advection and the new equation is 
\begin{align} \label{eq:q-1-case-4}
   \frac{v_0^2}{Z(a)} \del_x q_0 
    - \frac{v_0^2 G_1}{Z(a)} N \alpha_0 a(1 - a) \del_a q_0(t, x, a)
   = \vint{v q_1}_v \,,
\qquad
   G_1 = G/\Eps \,.
\end{align}
%
Integrating in $a$ gives the flux term as
\begin{align*}
& \quad \,
   \int_0^1\vint{v q_1}_v \frac{1}{N \alpha_0 a(1-a)} \da
\\
& = \frac{N\alpha_0}{4}(v_0^2 \del_x \rho_0) \int_0^1 \frac{1}{Z(a)} \delta_{1/2}(a) \frac{1}{N \alpha_0 a(1-a)} \da
   - v_0^2 G_1 \frac{N\alpha_0}{4}\int_0^1 \frac{1}{Z(a)} \del_a 
   q_0 \, \da
\\
& = \frac{v_0^2}{Z(1/2)} \del_x \rho_0
     + \frac{N\alpha_0}{4}\rho_0(t, x) v_0^2 G_1 \vpran{\frac{1}{Z(a)}}' \Big|_{a = 1/2} \,.
\end{align*}
Then the moment closure is the classical Keller-Segel equation:
\begin{align} \label{eq:KS}
   \del_t \rho_0
   + \del_{x} (D_0 \del_x\rho_0)
   +  \del_x (\kappa_3\rho_0) = 0 \,,
\end{align}
where the coefficient $D_0$ is
\begin{align*}
   D_0 = \frac{v_0^2}{Z(1/2)} = \frac{v_0^2}{z_0 + \tau_0^{-1}} \,,
\end{align*}
and $\kappa_3$ is the same transport speed defined in~\eqref{def:kappa-3}.
\begin{remark}
Since the distributions in $a$ for those forward and backward moving bacteria are explicitly known in \eqref{soln:q-0-plus-1}, \eqref{soln:q-0-minus-1},
when $g=\BigO(\Eps^\mu)$, $\mu\in(0,1)$,
we can not only get the leading order distribution in \eqref{soln:q-0-case-3}, but also the distribution up to $\BigO(\Eps^\mu)$. Therefore, the macroscopic equation up to $\BigO(\Eps^{1-\mu})$
can be obtained as well. An additional $\BigO(\Eps^{1-\mu})$ diffusion term will appear in the macroscopic equation which formally tends the diffusion in the Keller-Segel model 
when $\mu\to 1$. 
\end{remark}

\subsection{The leading order distribution}
The solution of \eqref{eq:q-0-1} plays an essential role in the derivation of the macroscopic equation, we prove Proposition
\ref{prop:leading-order} and show some properties of leading order distribution in this part. 
\begin{proof}[Proof of Proposition 2.1]
First note that since $q_0^\pm$ are finite measures, by~\eqref{eq:q-0-1} we have
\begin{align*}
     \text{$(a_1 - a) q_0^+$ and $(a_2 - a) q_0^-$ are both BV functions on any $(c, d) \subsetneq (0, 1)$} \,.
\end{align*}
Moreover, by the normalization condition~\eqref{cond:normalization}, 
\begin{align*}
    \liminf_{a \to 0} q_0^\pm(a) 
    = \liminf_{a \to 1} q_0^\pm(a)
    = 0 \,.
\end{align*}

\Ni (a) Since the velocity space $\VV$ is discrete, we have
\begin{align*}
    \vpran{\CalL q_0}^+(t, x, a) = \frac{1}{2} \vpran{q_0^- - q_0^+} \,,
\qquad
    \vpran{\CalL q_0}^-(t, x, a) = \frac{1}{2} \vpran{q_0^+ - q_0^-} \,.
\end{align*}
Together with the notation introduced in~\eqref{def:g-a-1-2}, equations~\eqref{soln:q-0-plus-1}-\eqref{soln:q-0-minus-1} become
\begin{align}
   a(1 - a) \del_a \vpran{(a_1 - a) q_0^+}
   = \frac{Z(a)}{4 N \alpha_0 k_R} \vpran{q_0^- - q_0^+} \,,
   \label{eq:q-0-plus}
\\
   a(1 - a) \del_a \vpran{(a_2 - a) q_0^-}
   = \frac{Z(a)}{4 N \alpha_0 k_R} \vpran{q_0^+ - q_0^-} \,.
   \label{eq:q-0-minus}
\end{align}
This is a system of two ODEs which we can solve explicitly. 
Since the variables $t, x$ do not appear explicitly  in equations~\eqref{eq:q-0-plus}-\eqref{eq:q-0-minus}, the solution $q_0$ will be in a separated form such that
\begin{align} \label{soln:q-0-separated}
    q_0(t, x, v, a) = \rho_0(t, x) \, Q_0(v, a) \,,
\qquad
    q_0^\pm(t, x, a) = \rho_0(t, x) \, Q_0^\pm (a) \,,
\end{align}
where $Q_0^\pm$ satisfy the normalization condition
\begin{align} \label{cond:normalization-1}
    \int_0^1 \frac{Q_0^+ + Q_0^-}{2N \alpha_0 a(1-a)} \da = 1  \,.
\end{align}
To solve~\eqref{eq:q-0-plus}-\eqref{eq:q-0-minus}, we add these two equations up and get
\begin{align*}
   a(1 - a) \del_a \vpran{(a_1 - a) Q_0^+ + (a_2 - a) Q_0^-}
   = 0 \,.
\end{align*}
Hence there exists a constant $c_1$ such that for $a \in (0, 1)$,
\begin{align} \label{eq:q-0-1-0-1}
    (a_1 - a) Q_0^+ + (a_2 - a) Q_0^-  = c_1 \,.
\end{align}
Now we show that $c_1 = 0$. Suppose instead $c_1 > 0$. Since $Q_0^\pm$ are both non-negative measures, we have 
\begin{align*}
    Q_0^-(a) \geq \frac{c_1}{a_2 - a} \geq \frac{c_1}{a_2} \geq 0 \,.
\end{align*}
This contradicts the finiteness of $Q_0^-$ in \eqref{cond:normalization-1}. Similarly, if $c_1 < 0$, then
\begin{align*}
    \bar q_0^+(a) \geq \frac{-c_1}{a - a_1} \geq \frac{-c_1}{1- a_1} \geq 0 \,,
\end{align*}
which also contradicts \eqref{cond:normalization-1}. Therefore $c_1 = 0$ and $Q_0^\pm$ satisfy
\begin{align} \label{eq:q-0-1-0-2}
    (a_1 - a) Q_0^+ + (a_2 - a) Q_0^-  = 0  \,.
\end{align}
This gives
\begin{align} \label{eq:q-0-1-1}
    Q_0^- = \frac{a - a_1}{a_2 - a} Q_0^+  \,.
\end{align}
Applying~\eqref{eq:q-0-1-1} in~\eqref{eq:q-0-plus}, we get for $a \in (0, 1)$, 
\begin{align*}
   \del_a \vpran{(a_1 - a) Q_0^+}
   = \frac{1}{4 N \alpha_0 k_R} 
      \frac{Z(a)}{a(1-a)} \frac{2a - 1}{(a_1 - a) (a_2 - a)} (a_1 - a) Q_0^+ \,.
\end{align*}
Solving this ODE for $(a_1 - a) \bar q_0^+$ gives
\begin{align*}
   (a_1 - a) Q_0^+
   = c_0 (a_1 - 1/2) \, 
      \exp\vpran{\frac{1}{4 N \alpha_0 k_R} 
           \int_{1/2}^a \frac{Z(\tau)}{\tau (1-\tau)} \frac{2\tau - 1}{(a_1 - \tau) (a_2 - \tau)} \dtau} \,.
\end{align*}
This combined with \eqref{eq:q-0-1-1} gives~\eqref{soln:q-0-plus-1} and~\eqref{soln:q-0-minus-1}. Note that we do not have concentration at $a = a_1, a_2$ since $a_1, a_2 \notin [0, 1]$.

\medskip

\Ni (b) The proof of (b) is similar to (a). Note that both $(a_1 - a) Q_0^+$ and $(a_2 - a) Q_0^-$ are again in $BV(c, d)$ for any $(c, d) \subsetneq (0, 1)$. Thus
equations~\eqref{eq:q-0-plus}-\eqref{eq:q-0-minus} and ~\eqref{eq:q-0-1-0-1} still holds on $(0, 1)$. Now we show $c_1 = 0$ when $0 < g <1$. In this case, $0 < a_1 < 1/2 < a_2 < 1$. Since $Q_0^\pm$ are non-negative measures, we have
\begin{align*} 
    (a_1 - a) Q_0^+ + (a_2 - a) Q_0^-  \geq 0 \,,
\qquad a \in (0, a_1) \,,
\\
    (a_1 - a) Q_0^+ + (a_2 - a) Q_0^-  \leq 0 \,,
\qquad a \in (a_2, 1) \,.
\end{align*}
Therefore $c_1 = 0$ and \eqref{eq:q-0-1-0-2} holds. This also implies
\begin{align*}
    Q_0^+(a) = Q_0^-(a) = 0 \,,
\qquad
   a \in (0, a_1) \cup (a_2, 1) \,.
\end{align*}
Thus $Q_0^\pm$ are compactly supported on $[a_1, a_2]$. Solving~\eqref{eq:q-0-1-0-2} gives
\begin{align} \label{eq:q-0-1-1-b}
    Q_0^- 
    = \frac{a - a_1}{a_2 - a} Q_0^+ 
       + c_2 \delta_{a_2}(a) \,,
\qquad 
    a \in [a_1, a_2] \,.
\end{align}
for some constant $c_2$. Solving~\eqref{eq:q-0-plus} on $(a_1, a_2)$ then gives~\eqref{soln:q-0-plus-1}-\eqref{soln:q-0-minus-1} on $(a_1, a_2)$ where $c_0 > 0$ may not satisfy the normalization condition since there can be concentration of $q_0^+$ at $a_1$ and $q_0^+$ at $a_2$. Now we show that there cannot be such concentrations. This is because both $(a_1 - a) q_0^+$ and $(a_2 - a) q_0^-$ are Lipschitz. Thus by~\eqref{eq:q-0-plus}-\eqref{eq:q-0-minus}, $q_0^- - q_0^+$ is in $L^\infty$. Hence they cannot have concentrations at $a_2, a_1$ respectively. Therefore, the solution to~\eqref{eq:q-0-1} for $0 < g < 1$ is as claimed in part (b).

\medskip

\Ni (c) If $g=0$, then $a_1 = a_2 = \frac{1}{2}$. In this case equation~\eqref{eq:q-0-1} becomes
\begin{align} \label{eq:q-0-1-c}
   N \alpha_0 k_R a(1 - a) \del_a \vpran{\vpran{1 - 2a} q_0}
   = Z(a) \CalL q_0 \,,
\end{align} 
or equations~\eqref{eq:q-0-plus}-\eqref{eq:q-0-minus} become
\begin{align} 
   a(1 - a) \del_a \vpran{(1 - 2a) q_0^+}
   = \frac{Z(a)}{4 N \alpha_0 k_R} \vpran{q_0^- - q_0^+} \,,
   \label{eq:q-0-plus-g-0}
\\
   a(1 - a) \del_a \vpran{(1 - 2a) q_0^-}
   = \frac{Z(a)}{4 N \alpha_0 k_R} \vpran{q_0^+ - q_0^-} \,.
   \label{eq:q-0-minus-g-0}
\end{align}
Adding up equations~\eqref{eq:q-0-plus-g-0}-\eqref{eq:q-0-minus-g-0} gives
\begin{align*} 
   N \alpha_0 k_R \, a(1 - a) \del_a \vpran{\vpran{1 - 2a} \vpran{q_0^+ + q_0^-}}
   = 0 \,.
\end{align*}
Therefore, 
\begin{align*}
   (1 - 2a) \vpran{q_0^+ + q_0^-} =  c_3 \,,
\qquad
   a \in (0, 1) \,,
\end{align*}
for some constant $c_3$. Since $q_0^+ + q_0^-$ is a non-negative measure, the only possible choice for $c_3$ is $c_3 = 0$. Hence
\begin{align*}
    \vint{q_0}_v = \frac{q_0^+ + q_0^-}{2} 
    = N \alpha_0 a (1 - a) \rho_0(t, x) \delta_{1/2}(a)
    = \frac{N \alpha_0}{4} \rho_0(t, x) \delta_{1/2}(a) \,,
\end{align*}
where $\rho_0$ is a probability measure. This implies that  
\begin{align*}
    (1- 2a)\CalL q_0 = - (1-2a) q_0 \,.
\end{align*}
Thus multiplying~\eqref{eq:q-0-1-c} by $(1 - 2a)$ gives
\begin{align}  \label{eq:q-0-2-1}
   N \alpha_0 k_R (1 - 2a) a(1 - a) \del_a \vpran{\vpran{1 - 2a} q_0}
   = - Z(a) (1 - 2a) q_0  \,,
\end{align}
By the finiteness of the measure $q_0$ as defined in~\eqref{cond:normalization}, the boundary conditions of $(1 - 2a) q_0$ are 
\begin{align} \label{cond:bdry-c}
   (1 - 2a) q_0 = 0 \,,
\qquad
   a = 0, 1/2, 1 \,.
\end{align}
Note that equation~\eqref{eq:q-0-2-1} shows 
\begin{align*}
  \del_a ((1-2a) q_0) 
  \begin{cases}
     \leq 0 \,, & a < 1/2 \,, \\[2pt]
     \geq 0 \,, & a > 1/2 \,.
  \end{cases}
\end{align*}
Combined with the boundary conditions in~\eqref{cond:bdry-c}, equation~\eqref{eq:q-0-2-1} has a unique solution (up to multiplication by $\rho_0(t, x)$) such that $(1 - 2a) q_0 = 0$. Thus the only solution to~\eqref{eq:q-0-1-c} is 
\begin{align*} 
    q_0 = \vint{q_0}_v 
    = N \alpha_0 a(1-a)\rho_0(t, x) \delta_{1/2}(a)
    = \frac{N \alpha_0}{4} \rho_0(t, x) \delta_{1/2}(a) \,.
\end{align*}

%
%
%
%
%
%
\end{proof}

We can also study in more details the behaviour of the solution $q_0$ in Proposition~\ref{prop:leading-order} near $0, 1$ when $g > 1$ and near $a_1, a_2$ when $0 < g < 1$.
\begin{lem}
Let $q_0$ be the solution to the ODE~\eqref{eq:q-0-1} in Proposition~\ref{prop:leading-order} and $q_0(t, x, v, a) = \rho_0(t, x) Q_0(v, a)$.

\Ni (a) If $g > 1$, then there exists $\theta_0, \theta_1 > 0$ such that 
\begin{align*}
    Q_0^\pm (a) = \BigO\vpran{a^{\theta_0}} \,\, \text{for $a$ near 0} \,,
\qquad
    Q_0^\pm (a) = \BigO\vpran{a^{\theta_1}} \,\, \text{for $a$ near 1} \,.
\end{align*}

\Ni (b) If $0 < g < 1$, then there exists $\theta_2, \theta_3 > 0$ such that
\begin{align*}
   Q_0^+(a) 
   = \begin{cases}
        \BigO\vpran{(a - a_1)^{\theta_2 - 1}} \,, & a \to a_1^+ \,, \\[2pt]
        \BigO\vpran{(a_2 - a)^{\theta_3}} \,, & a \to a_2^- \,,
      \end{cases}
\qquad \quad 
   Q_0^-(a)
   = \begin{cases}
        \BigO\vpran{(a - a_1)^{\theta_2}} \,, & a \to a_1^+ \,, \\[2pt]
        \BigO\vpran{(a_2 - a)^{\theta_3 - 1}} \,, & a \to a_2^- \,.
      \end{cases}
\end{align*}
\end{lem}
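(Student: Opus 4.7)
The plan is to work directly from the explicit representations~\eqref{soln:q-0-plus-1}--\eqref{soln:q-0-minus-1} and to extract the power-law behaviour by isolating the logarithmic singularities of the exponent. Set
\[
   \Phi(a) = \frac{1}{4 N \alpha_0 k_R}
           \int_{1/2}^a \frac{Z(\tau)}{\tau (1-\tau)} \frac{2\tau - 1}{(a_1 - \tau) (a_2 - \tau)} \dtau,
\]
so that $Q_0^+(a) = c_0 (\tfrac12 - a_1)(a - a_1)^{-1} e^{\Phi(a)}$ and $Q_0^-(a) = c_0(\tfrac12 - a_1)(a_2 - a)^{-1} e^{\Phi(a)}$. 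The algebraic prefactors are smooth and nonvanishing away from $\{a_1, a_2\}$, so the whole task reduces to computing the leading-order asymptotics of $\Phi$ at each relevant boundary point and absorbing the $\BigO(1)$ remainder into the constant $c_0$.

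For part (a), since $a_1 < 0 < 1 < a_2$, the only singularities of the integrand inside $[0, 1]$ are the simple poles $\tau = 0$ (from $1/\tau$) and $\tau = 1$ (from $1/(1 - \tau)$). Expanding the remaining smooth factors around $\tau = 0$ gives residue $-Z(0)/(a_1 a_2)$, and integrating yields $\Phi(a) = \theta_0 \ln a + \BigO(1)$ as $a \to 0^+$ with $\theta_0 = -Z(0)/(4 N \alpha_0 k_R\, a_1 a_2)$, which is positive because $a_1 a_2 < 0$. Exponentiating and noting that $(a - a_1)^{-1}$ and $(a_2 - a)^{-1}$ are bounded near $0$ gives $Q_0^\pm(a) = \BigO(a^{\theta_0})$. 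The analysis at $\tau = 1$ is the mirror image, producing $\theta_1 = -Z(1)/(4 N \alpha_0 k_R (1 - a_1)(a_2 - 1)) > 0$ and the analogous decay at the right endpoint.

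For part (b), the relevant poles of the integrand inside the support $[a_1, a_2]$ are now at $\tau = a_1$ and $\tau = a_2$, while the prefactors $(a - a_1)^{-1}$, $(a_2 - a)^{-1}$ contribute genuine blow-up. Near $\tau = a_1$, the residue of the integrand equals $\tfrac{Z(a_1)(2a_1 - 1)}{a_1(1 - a_1)(a_2 - a_1)}$, so $\Phi(a) = \theta_2 \ln(a - a_1) + \BigO(1)$ with $\theta_2 = -Z(a_1)(2a_1 - 1)/(4 N \alpha_0 k_R\, a_1(1 - a_1)(a_2 - a_1))$; positivity of $\theta_2$ follows from $2a_1 - 1 < 0$ together with $a_1(1 - a_1)(a_2 - a_1) > 0$. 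Combining with the explicit prefactors gives $Q_0^+(a) = \BigO((a - a_1)^{\theta_2 - 1})$, and the identity~\eqref{eq:q-0-1-1} (which also holds in case (b) since the concentration constant $c_2$ vanishes) provides the extra factor $(a - a_1)/(a_2 - a)$ for $Q_0^-$, raising its exponent back to $\theta_2$. The behaviour at $a_2^-$ follows by the symmetric computation and defines $\theta_3$.

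The main subtlety is purely bookkeeping: one must check that each coefficient $\theta_i$ is positive despite the many sign changes in the product $(2\tau - 1)(a_1 - \tau)(a_2 - \tau)$, exploiting $a_1 a_2 < 0$ in case (a) and $2a_1 - 1 < 0$, $2a_2 - 1 > 0$ in case (b). One also has to verify that the $\BigO(1)$ remainders in the expansion of $\Phi$ are genuinely bounded, which holds because after subtracting the singular leading term at each pole the integrand is continuous in a neighbourhood of that pole and hence integrable; the desired power laws then follow immediately from exponentiation.
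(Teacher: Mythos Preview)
Your approach is essentially the same as the paper's: both isolate the simple pole of the integrand in $\Phi$ at each boundary point, show that after subtracting the pole the remainder is continuous (the paper phrases this as $A_i \in C^1$ on the relevant interval, you phrase it as ``the integrand is continuous in a neighbourhood of that pole''), exponentiate to recover the power law, and then track the prefactors $(a-a_1)^{-1}$, $(a_2-a)^{-1}$ together with the relation $Q_0^- = \frac{a-a_1}{a_2-a} Q_0^+$.

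One bookkeeping slip to fix: your formula for $\theta_1$ has a stray minus sign. With $g>1$ one has $1-a_1>0$ and $a_2-1>0$, so $-Z(1)/\bigl(4N\alpha_0 k_R(1-a_1)(a_2-1)\bigr)$ is negative, contradicting your claim that it is positive. The correct value is $\theta_1 = Z(1)/\bigl(4N\alpha_0 k_R(1-a_1)(a_2-1)\bigr)$: near $\tau=1$ the singular part of the integrand is $\frac{1}{4N\alpha_0 k_R}\frac{Z(1)}{(a_1-1)(a_2-1)}\cdot\frac{1}{1-\tau}$, and $\int_{1/2}^a \frac{d\tau}{1-\tau} = -\ln(1-a)+\text{const}$, so the two minus signs cancel. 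Since you explicitly flag the sign checks as the ``main subtlety,'' it is worth getting this one right.
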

\begin{proof}
(a)  By the definition of $q_0^+$ in~\eqref{soln:q-0-plus-1},  the asymptotic limit of $Q_0^+$ satisfies
\begin{align*}
   \lim_{a \to 0} Q_0^+
 &  = c_0 \frac{\frac{1}{2} - a_1}{-a_1}
        \exp\vpran{-\int_0^{1/2} \frac{A_0(\tau) - A_0(0)}{\tau} \dtau}
      \lim_{a \to 0} 
      \exp\vpran{-\frac{z_0}{4 N \alpha_0 k_R}
       \frac{1}{a_1a_2} \int_{1/2}^a \frac{1}{\tau} \dtau }
\\
  & = c_0 \frac{\frac{1}{2} - a_1}{-a_1} 
        \exp\vpran{-\int_0^{1/2} \frac{A_0(\tau) - A_0(0)}{\tau} \dtau}
        2^{\theta_0} 
        \lim_{a \to 0} a^{\theta_0} \,,
\end{align*}
where
\begin{align*}
     A_0(\tau) 
     = \frac{1}{4 N \alpha_0 k_R} \frac{Z(\tau)(2\tau - 1)}{(1 - \tau) (a_1-\tau)(a_2-\tau)} \,,
\qquad
   \theta_0 = A_0(0) = -\frac{z_0}{4 N \alpha_0 k_R} \frac{1}{a_1a_2} \,.
\end{align*}
The integral involving $A_0(\tau)$ converges at $\tau =0$ since $A_0 \in C^1([0, 1/2])$. Moreover, $\theta_0 > 0$ since $a_1 < 0 < a_2$ and $z_0 > 0$. This shows $Q_0^+$, as well as $Q_0^-$, decays to zero algebraically at $a = 0$. Note that since $z_0$ is generally small, the rate of decay can be sublinear.

Similarly, near $a = 1$ the asymptotic limit of $Q_0^+$ is
\begin{align*}
& \quad \,
   \lim_{a \to 1} Q_0^+
\\
 &  = c_0 \frac{\frac{1}{2} - a_1}{1-a_1}
      \exp\vpran{\int^1_{1/2} \frac{A_1(\tau) - A_1(1)}{1-\tau} \dtau}
      \lim_{a \to 1} 
      \exp\vpran{-\frac{2^H}{4 N \alpha_0 k_R}
       \frac{1}{(1 - a_1) (a_2 - 1)} \int_{1/2}^a \frac{1}{1 - \tau} \dtau }
\\
  & = c_0 \frac{\frac{1}{2} - a_1}{1 - a_1} 
        \exp\vpran{\int^1_{1/2} \frac{A_1(\tau) - A_1(1)}{1-\tau} \dtau}
         2^{\theta_1}
        \lim_{a \to 1} (1 - a)^{\theta_1} \,,
\end{align*}
where 
\begin{align*}
   A_1(\tau) 
   =  \frac{1}{4 N \alpha_0 k_R} \frac{Z(\tau) (2\tau - 1)}{\tau (a_1 - \tau) (a_2 - \tau)} \,,
\qquad
   \theta_1 = A_1(1) = \frac{2^H}{4 N \alpha_0 k_R} \frac{1}{(1 - a_1) (a_2 - 1)} \,.
\end{align*}
Again since $A_1 \in C^1([1/2, 1])$, the integral involving $A_1$ converges at $\tau = 1$. We also have an algebraic decay to zero for $Q_0^\pm$ as $a \to 1$. In this case since $H$ is generally large (for example $H = 10$ in our numerical example), the decay near $a = 1$ is nearly exponential. 

\medskip
\Ni (b) Similar as in part (a), we have near $a_1$ the asymptotic limit of $Q_0^+$ is 
\begin{align*}
& \quad \,
   \lim_{a \to a_1^+} Q_0^+
\\
 &  = c_0 
        \exp\vpran{-\int^{1/2}_{a_1} \frac{A_2(\tau) - A_2(a_1)}{a_1 - \tau} \dtau} 
         \lim_{a \to a_1^+} 
         \frac{\frac{1}{2} - a_1}{a - a_1}
            \exp\vpran{-\frac{Z(a_1)}{4 N \alpha_0 k_R}
       \frac{1}{a_1(1-a_1)} \int_{1/2}^a \frac{1}{a_1 - \tau} \dtau }
\\
  & = c_0 \vpran{\frac{1}{2} - a_1} 
        \exp\vpran{-\int^{1/2}_{a_1} \frac{A_2(\tau) - A_2(a_1)}{a_1 - \tau} \dtau}
        \vpran{\frac{2 k_R}{v_0 G}}^{\theta_2} 
        \lim_{a \to a_1^+} (a-a_1)^{\theta_2 - 1} \,,
\end{align*}
where
\begin{align*}
   A_2(\tau) = \frac{1}{4 N \alpha_0 k_R} \frac{Z(\tau)(2\tau - 1)}{\tau (1-\tau)(a_2 - \tau)} \,,
\qquad
   \theta_2 = A_2(a_1) =  \frac{Z(a_1)}{4 N \alpha_0 k_R} \frac{1}{a_1(1-a_1)} \,.
\end{align*}
The integral involving $A_2$ converges at $a=a_1$ since $A_2 \in C^1([a_1, a_2])$.
Then by~\eqref{soln:q-0-minus-1}, 
\begin{align*}
   \lim_{a \to a_1^+} Q_0^-
   = c_0 \frac{\frac{1}{2} - a_1}{a_2 - a_1} 
       \exp\vpran{-\int^{1/2}_{a_1} \frac{A_2(\tau) - A_2(a_1)}{a_1 - \tau} \dtau}
        \vpran{\frac{2 k_R}{v_0 G}}^{\theta_2}
        \lim_{a \to a_1^+} (a-a_1)^{\theta_2}
    \,.
\end{align*}

Similarly, near $a_2$, we have
\begin{align*}
   \lim_{a \to a_2^-} Q_0^+
 &  = c_0 
         \exp\vpran{\int_{1/2}^{a_2} \frac{A_3(\tau) - A_3(a_2)}{a_2 - \tau} \dtau}
         \frac{\frac{1}{2} - a_1}{a_2 - a_1}
         \lim_{a \to a_2^-} 
            \exp\vpran{-\frac{Z(a_2)}{4 N \alpha_0 k_R}
       \frac{1}{a_2(1-a_2)} \int_{1/2}^a \frac{1}{a_2 - \tau} \dtau }
\\
  & = c_0
        \exp\vpran{\int_{1/2}^{a_2} \frac{A_3(\tau) - A_3(a_2)}{a_2 - \tau} \dtau} 
        \frac{\frac{1}{2} - a_1}{a_2 - a_1} 
        \vpran{\frac{2 k_R}{v_0 G}}^{\theta_3}
        \lim_{a \to a_2^-} (a_2 - a)^{\theta_3}
      \,,
\end{align*}
where
\begin{align*}
   A_3(\tau) = \frac{1}{4 N \alpha_0 k_R} \frac{Z(\tau)(2\tau - 1)}{\tau (1-\tau)(a_1 - \tau)} \,,
\qquad
   \theta_3 = A_3(a_2) = \frac{Z(a_2)}{4 N \alpha_0 k_R} \frac{1}{a_2(1-a_2)} \,.
\end{align*}
Again the integral involving $A_3$ converges at $a=a_2$ since $A_3 \in C^1([a_1, a_2])$. Using~\eqref{soln:q-0-minus-1} again we have
\begin{align*}
   \lim_{a \to a_2^-} Q_0^-
   = c_0 
       \exp\vpran{\int_{1/2}^{a_2} \frac{A_3(\tau) - A_3(a_2)}{a_2 - \tau} \dtau} 
       \vpran{\frac{1}{2} - a_1} 
       \vpran{\frac{2 k_R}{v_0 G}}^{\theta_3}
        \lim_{a \to a_2^-} (a_2-a)^{\theta_3 - 1}
     \,.
\end{align*}
\end{proof}

\begin{remark} Here the value of $\theta_k$ ($0 \leq k \leq 3$) determines the behaviour of $q_0^\pm$ ($q_0^-$) near $a= 0, 1$ or $a=a_1, a_2$. Since $\theta_k>0$ for all $0 \leq k \leq 3$, the integrability in the normalization condition
 \eqref{cond:normalization-1} is guaranteed. We note the following differences between $g > 1$ and $0 < g < 1$:
\begin{itemize}
\item If $g>1$, then $q_0^\pm \to 0$ algebraically as $a \to 0$ or $a \to 1$. The decay rate is given by $\theta_0$ near $a=0$ and $\theta_1$ near $a=1$.

\item If $0 < g < 1$, then $0 < a_1 < 1/2 < a_2 < 1$. In this case 
\begin{align*}
    Z(a_1) &= z_0 + \tau_0^{-1} (a_1/a_0)^{H} = z_0 + \tau_0^{-1} (2a_1)^{H} \,,
\\
     Z(a_2) &= z_0 + \tau_0^{-1} (a_2/a_0)^H =  z_0 + \tau_0^{-1} (2a_2)^{H} \,.
\end{align*}
Thus depending on the values of $z_0, \tau_0$ and $H$, the parameter $\theta_2$ can be less than 1 for some $a_1\in(0,a_0)$, in which case we have $\lim_{a \to a_1^+} q_0^+ = \infty$ with the growth rate $1 - \theta_2$. However, when $a_2$ is close to $0$, by its definition $\theta_2$ can increase to be larger than $1$. Then $\lim_{a \to a_1^+} q_0^+ = 0$. 
On the other hand, since $H$ is large, the parameter $\theta_3$ is more likely to be larger than $1$. 

Using the particular physical parameters for wild type E.coli in section \ref{sec:numerics}, we do have $\theta_2,\theta_3 >~1$ for all 
 $a_1\in(0,1/2)$ and $a_2\in (1/2,1)$. Hence in Section 3 we have algebraic decay of $Q_0^\pm$ near both $a_1$ and $a_2$.
 \end{itemize}

\end{remark}

\section{Comparison with Numerics}\label{sec:numerics}
In this section we specify various types of scalings and compare the numerical results using the agent-based model SPECS and the closures derived in Section~\ref{sec:asymptotics}. 
Recall that the intracellular dynamics and tumbling frequency are given by 
\begin{align*}
   f \big(m - M(S) \big) 
   = F_0(a)
   = k_R (1 - a/a_0) \,,
\qquad 
     \Lambda \big(m - M(S) \big)
     = Z(a)
     = z_0 + \tau_0^{-1} \left(\frac{a}{a_0} \right)^H, 
\end{align*}
where $a(m-M(S))$ is the receptor activity defined in~\eqref{def:a-m}.
The parameters are chosen as in \cite{SWOT} such that
\begin{align*}
    &v_0=\frac{16.5}{\sqrt{2}}\mu m/s,\quad k_R=0.01s^{-1}\sim 0.0005s^{-1},\quad a_0=0.5,\\
    &\alpha_0=1.7,\quad z_0=0.14s^{-1}, \quad\tau_0=0.8s,\quad H=10.
\end{align*}
The external signal is given by $S=S_0\exp(Gx)$, where $G$ takes the values  $0\sim 2*10^{-3}\mu m^{-1}$. Since $f_0(S)$ can be approximated by $\ln (S/K_I)$ when $18.2\mu M=K_I\ll S\ll K_A=3mM$,  we consider $S_0=4K_I$ and choose the space domain
such that $5K_I<S(x)\leq K_A/5$. Therefore,  the computational domain depends on $G$.

Let $T, L$ be the characteristic time and space scale for the movement on the population level. Let $T_M, L_M$ be the characteristic time and length for the outside signal. We nondimensionalize equation \eqref{eq:q} by letting 
\begin{align*}
     t = T \tilde{t} \,,
\qquad 
     x = L \tilde{x} \,,
\qquad 
     v_0 = V_0 \tilde{v} \,,
\qquad 
     k_R = \frac{1}{T_a} \tilde k_R \,,
\qquad
     Z(a) = \frac{1}{T_t} \tilde Z(a) \,,
\end{align*}
where $T_a$ and $T_t$ are the characteristic adaptation time and tumbling time
respectively. Let 
\begin{align*}
   \tilde{q}(\tilde{t},\tilde{x},v,a)=q(t,x,v,a) \,,
\qquad
  \tilde M (\tilde t, \tilde x) = M(t, x) \,.
\end{align*}
Then the equation for $\tilde{q}$ becomes 
\begin{align} \label{eq:scale}
   \frac{L}{TV_0}\del_{\tilde{t}} \tilde{q} 
   + \tilde{v} \del_{\tilde{x}}\tilde{q} 
   + N \alpha_0 a(1 - a) \del_a \vpran{\vpran{-\frac{L}{V_0 T_M}\partial_{\tilde t}\tilde M-\frac{L}{L_M}\tilde v \partial_{\tilde x} \tilde M + \frac{L}{V_0 T_a}\tilde{k}_R \vpran{1 - \frac{a}{a_0}}} \tilde{q}}\nonumber
  \\ =\frac{L}{V_0 T_t} \tilde{Z}(a) \int_\VV (\tilde{q}(\tilde{t}, \tilde{x}, v', a) - \tilde{q}(\tilde{t}, \tilde{x}, v, a)) \dv' \,.
\end{align}
In the exponential environment, let
\begin{align*}
     T_M = \infty \,,
\qquad 
     V_0 =10 \mu m/s \,.
\end{align*}
The scalings for Case I and II in the previous section correspond to
\begin{itemize}
\item In case I where $g = \BigO(1)$,
let
\begin{align*}
    \Eps = \frac{V_0 T_a}{L} = \frac{V_0 T_t}{L} = \frac{L_M}{L}  \,.
\end{align*}
Then 
\begin{align*}
    g = \BigO\vpran{\frac{V_0 T_a}{L_M}} = \BigO(1) \,.
\end{align*}
Hence equation\eqref{eq:scale} becomes equation~\eqref{eq:q-1-scaled}. 
\item In case II, let
\begin{align*}
   \Eps = \frac{V_0 T_a}{L} = \frac{V_0 T_t}{L} \,,
\qquad
   \Eps^\mu = \frac{L}{T V_0} \,,
\qquad
   \Eps^{1-\mu} = \frac{L_M}{L} \,,
\end{align*}
%
Then 
\begin{align*}
    g = \BigO\vpran{\frac{V_0 T_a}{L_M}} 
    = \BigO\vpran{\frac{V_0 T_a}{L}} \BigO\vpran{\frac{L}{L_M}}
    = \Eps^\mu \,.
\end{align*}
Thus equation~\eqref{eq:scale} becomes equation~\eqref{eq:q-3-scaled}.
\end{itemize}

In \cite{SWOT}, the authors developed a macroscopic pathway-based mean field theory (PBMFT) which successfully explained a counter-intuitive experimental observation: there exists a phase shift between the dynamics of ligand concentration and centre of mass of the cells in a spatial-temporal fast-varying environment,. However, PBMFT fails to give the right macroscopic drift velocity in the exponential environment with  large gradients. 

In the rest of this section we compare our results with SPECS and PBMFT. Exponential environment is considered and we use 
periodic boundary conditions in space, i.e. in SPECS, each bacterial that runs out of the right (left) boundary of computational domain will enter again from the left (right) with the same activity $a$.

\begin{itemize}
\item  {\em Comparison of the distribution in $a$.} We compute the distribution of the bacteria in $a$ in two ways: one is to run SPECS and count the number of bacteria with $a$ in a small interval; the other is to
 compute $\frac{q_0^+}{N\alpha_0 a(1-a)}$, $\frac{q_0^-}{N\alpha_0 a(1-a)}$ analytically according to \eqref{soln:q-0-plus-1}, \eqref{soln:q-0-minus-1}.
 We can see that the analytical distribution yields almost the same distribution as SPECS. Moreover,
the average drift velocity of our macroscopic model matches well with SPECS, while the part that the PBMFT is no longer valid is in Case I and Case II with $0 < g <1$ where the hyperbolic scaling applies.

The formal asymptotic analysis in section \ref{sec:asymptotics} shows that the classification of the various cases depends on the size of $g = \frac{vG}{\alpha_0 k_R}$. Then for a given $k_R$,
we can divide the value of $G$ into several intervals, where each interval corresponds to one case. Fix $k_R=0.005s^{-1}$. Then
\begin{align*}
   g = 1 \Leftrightarrow G =7.4*10^{-4}\mu m^{-1} \,,
\qquad
   g = 0.1 \Leftrightarrow G =7.4*10^{-5}\mu m^{-1} \,,
\\
   g = 0.01 \Leftrightarrow G =7.4*10^{-6}\mu m^{-1} \,,
\qquad
  g < 0.01 \Leftrightarrow G < 7.4*10^{-6}\mu m^{-1} \,.
\end{align*}
Thus we can divide the range of $G$ as
\begin{itemize}
\item $G > 7.4*10^{-4}\mu m^{-1}$: Case I with $g > 1$. In this case the leading-order distribution $q_0$ spreads over $a \in (0, 1)$. The macroscopic density $\rho_0$ satisfies a hyperbolic equation.

\item $G \in(7.4*10^{-5}\mu m^{-1},7.4*10^{-4}\mu m^{-1})$: Case I with $0 < g < 1$. In this case the leading-order distribution $q_0$ is compactly supported on $[a_1, a_2]$. The macroscopic density $\rho_0$ satisfies a hyperbolic equation.

\item $G\in (7.4*10^{-6}\mu m^{-1},7.4*10^{-5}\mu m^{-1})$: Case II with $0 < \mu < 1$. In this case the leading-order distribution $q_0$ is concentrated at $a = 1/2$. The macroscopic density $\rho_0$ satisfies a hyperbolic equation.

\item $G<7.4*10^{-6}\mu m^{-1}$: Case II with $\mu = 1$. In this case the leading-order distribution $q_0$ is concentrated at $a = 1/2$. The macroscopic density $\rho_0$ satisfies the Keller-Segel equation. 
\end{itemize}
%
Figure \ref{fig:case12} and \ref{fig:case34} shows the analytical distributions given by the asymptotic analysis in Section~\ref{sec:asymptotics}
yield almost the same distributions as SPECS. As $G$ increases, more and more bacteria become concentrated near $a=0$. This indicates that the tumbling frequency of the bacteria becomes low. The density distribution is concentrated near $a = 0.5$ for $G$ small and it spreads out when $G$ increase. The moment closure techniques in 
all previous paper \cite{ErbanOther04,XO,STY,SWOT} have used the assumption that the methylation level is not far away from its average, so that it is possible to use 
the Taylor expansion near the average to approximate the distribution in the internal state. This assumption fails in the large-gradient environment. 


\item {\em The distribution of $\frac{q_0^+}{N\alpha_0 a(1-a)}$ and $\frac{q_0^-}{N\alpha_0 a(1-a)}$ near $a=0$.} According to the analytical formulas  in \eqref{soln:q-0-plus-1}-\eqref{soln:q-0-minus-1}, if $\theta_0=-\frac{z_0}{4N\alpha_0k_R}\frac{1}{a_1a_2}>1$, then $\frac{q_0^\pm}{N\alpha a(1-a)}\to 0$ as $a\to 0$. If $0<\theta_0<1$, then $\frac{q_0^\pm}{N\alpha a(1-a)}\to +\infty$ as $a\to 0$. This can be considered as a phase transition of the density distribution at $a=0$, which can be seen from Figure \ref{fig:disa0}. 
The different distributions of $q_0^+$ near $a=0$ for different cases are harder to distinguish from the SPECS simulation.

\item  {\em Comparison of the average drift velocity for different $k_R$'s and different $G$'s.} From Figure \ref{fig:case12} and Figure \ref{fig:case34}, we can observe that the distribution in $a$ is almost
uniform in space, while the fluctuation in space increases with $G$. We compute the average drift velocity analytically both by \eqref{eq:kappa1} and by SPECS. In the SPECS simulation, the population-level drift velocity is obtained by counting the difference between the number of forward and backward moving bacteria and multiply it by $v_0$.  In Figure \ref{fig:vd} we compare the average drift velocity obtained by these two methods as well as by PBMFT in \cite{SWOT}. The authors pointed out in \cite{SWOT} that the average drift velocity will saturate when $G$ increases. This is due to the particular stopping criteria that is used to determined when the system has arrived at a steady state. If instead we run the SPECS code for a longer time until the mean and variance of the average drift velocities do not change much, then the average drift velocities do not saturate but decrease when $G$ is large enough. As has already been observed in \cite{SWOT}, PBMFT can not give the right prediction of the population level chemotaxis velocity when $G$ becomes large while our analytical results match well with SPECS.

\end{itemize}

\section{Rigorous Derivation}
In this section, we rigorously derive the macroscopic models in all the cases in Section~\ref{sec:asymptotics}. 

\subsection{Well-posedness} The well-posedness of the kinetic equation~\eqref{eq:kinetic-m} will be established in the space of probability measures. To this end, we introduce a few notations from mass transportation. The space we will consider is $\CalP_1(\X)$, the probability space on the metric space $\X$ with finite first moments. In this paper, the metric space $\X$ is $\X = \R \times \VV \times (0, 1)$ where $\R$ and $(0, 1)$ are equipped with the usual Euclidean metric and $\VV$ is a bounded space with a unit measure $\dv$.
We use the 1-Wasserstein distance on $\CalP_1(\X)$ defined by
\begin{align*}
    W_1(\mu, \nu) 
    = \sup \left\{\int_{\X} \phi(x) \vpran{\dmu - \dnu} \Big| 
                       \norm{\phi}_{Lip} \leq 1\right\} \,,
\qquad
   \mu, \nu \in \CalP_1 \,.
\end{align*}

Let $E$ be a vector field and $X$ be transported by $E$ as
\begin{align*}
   \frac{{\rm d} X}{\dt} = E(t, X) \,,
\qquad
   X(0, x_0) = x_0 \,.
\end{align*}
Denote the associated flow map as $\CalT$ such that $\CalT x_0 = X$.
The push-forward operator $\CalT^t_E \# f_0$ is defined as
\begin{align*}
   \int_{\X} \xi(x) \vpran{\CalT^t_E \# f_0} (t, x)\dx
 = \int_{\X} \xi(\CalT x_0) f_0(x_0) \dx_0 \,,
\end{align*}
for any $\xi \in C_b(\X)$ where $C_b(\X)$ is the space of continuous and bounded functions on $\X$. For regular enough $E$ and $f_0$, the push-forward operator gives the solution to the transport equation
\begin{align*}
    \del_t f + \nabla_x \cdot \vpran{E(t, x) f} = 0 \,,
\qquad
   f(0, x) = f_0(x) \,.
\end{align*}
Define
\begin{align*}
   \hat q = \frac{q}{N\alpha_0 a(1-a)} \,.
\end{align*}
Then the equation for $\hat q$ becomes
\begin{align} \label{eq:hat-q}
   \del_t \hat q + v \del_x \hat q 
   + \del_a \vpran{\vpran{- v G + k_R \vpran{1 - \frac{a}{a_0}}} N \alpha_0 a(1 - a) \hat q}
   = Z(a) \int_\VV (\hat q(t, x, v', a) - \hat q(t, x, v, a)) \dv' \,.
\end{align}
The characteristic equation associated with equation~\eqref{eq:hat-q} is 
\begin{align*}
   \frac{\dx}{\dt} &= v \,,
\\
   \frac{\da}{\dt} &= \vpran{- v G + k_R \vpran{1 - \frac{a}{a_0}}} N \alpha_0 a(1 - a) \,,
\\
   \frac{\dv}{\dt} &= 0 \,.
\end{align*}
Thus the vector field $E$ is 
\begin{align} \label{def:E}
   E(t, x, a, v) 
   = \vpran{v, \,\, \vpran{- v G + k_R \vpran{1 - \frac{a}{a_0}}} N \alpha_0 a(1 - a), \,\, 0} \,,
\end{align}
which is globally Lipschitz for each given $G$. 

\begin{definition}
The measure-value solution to equation~\eqref{eq:q-1} is defined as the measure 
\begin{align*}
   q = N\alpha_0 a(1-a) \hat q \,,
\end{align*} 
where $\hat q \in C([0, T); \CalP_1\vpran{\R \times \VV \times (0,1))}$ satisfies
\begin{align} \label{def:soln-hat-q}
   \hat q(t,x, a, v) 
   = \CalT^t_E \# \hat q^{in} 
      + \int_0^t  \CalT^{t-s}_E \# \CalL \hat q (s, x, a, v) \ds \,.
\end{align}
Here $E$ is given in~\eqref{def:E} and
\begin{align*}
    \hat q^{in} = \frac{q^{in}}{N\alpha_0 a(1-a)} \in \CalP_1 \,,
\qquad
   \CalL \hat q = \vint{\hat q}_v - \hat q \,.
\end{align*}
\end{definition}

We recall one lemma from \cite{CCR2011}:
\begin{lem}[Lemma 3.18 in \cite{CCR2011}]\label{lem:CCR}
Let $\CalT: \X \to \X$ be a globally Lipschitz map and $f, g \in \CalP_1(\X)$. Then
\begin{align*}
   W_1(\CalT \# f, \CalT \# g)
\leq 
   Lip(\CalT) W_1(f, g) \,,
\end{align*}
where $Lip(\CalT)$ is the Lipschitz constant of $\CalT$.
\end{lem}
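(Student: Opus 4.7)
My plan is to prove this via the Kantorovich (coupling) formulation of the 1-Wasserstein distance, which is the most direct route. Recall that
\begin{align*}
W_1(\mu, \nu) = \inf_{\pi \in \Pi(\mu, \nu)} \int_{\X \times \X} |x - y| \, d\pi(x, y) \,,
\end{align*}
where $\Pi(\mu, \nu)$ denotes the set of couplings of $\mu$ and $\nu$ (probability measures on $\X \times \X$ with marginals $\mu$ and $\nu$). For $f, g \in \CalP_1(\X)$, standard compactness/tightness arguments (see Villani) guarantee that an optimal coupling $\pi^\star \in \Pi(f, g)$ exists and realizes $W_1(f, g)$.

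The key step is to build a coupling of $\CalT \# f$ and $\CalT \# g$ from $\pi^\star$. Consider the product map $\CalT \times \CalT : \X \times \X \to \X \times \X$ and the pushforward $(\CalT \times \CalT) \# \pi^\star$. For any bounded continuous test function $\phi$ on $\X$, the first marginal satisfies
\begin{align*}
\int \phi(x') \, d\!\left((\CalT \times \CalT) \# \pi^\star\right)(x', y') = \int \phi(\CalT(x)) \, d\pi^\star(x, y) = \int \phi(\CalT(x)) \, df(x) = \int \phi \, d(\CalT \# f) \,,
\end{align*}
and analogously the second marginal is $\CalT \# g$. Hence $(\CalT \times \CalT) \# \pi^\star \in \Pi(\CalT \# f, \CalT \# g)$, and is in general not optimal.

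Since any coupling gives an upper bound for $W_1$, I can estimate
\begin{align*}
W_1(\CalT \# f, \CalT \# g) \leq \int |x' - y'| \, d\!\left((\CalT \times \CalT) \# \pi^\star\right)(x', y') = \int |\CalT(x) - \CalT(y)| \, d\pi^\star(x, y) \,.
\end{align*}
Applying the global Lipschitz bound pointwise inside the integral and using optimality of $\pi^\star$ finishes the proof:
\begin{align*}
\int |\CalT(x) - \CalT(y)| \, d\pi^\star(x, y) \leq Lip(\CalT) \int |x - y| \, d\pi^\star(x, y) = Lip(\CalT) \, W_1(f, g) \,.
\end{align*}
There is no real obstacle beyond invoking the existence of an optimal coupling for measures with finite first moment; the core argument is the two-line observation that pushing a coupling through $\CalT \times \CalT$ preserves the marginal structure. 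Equivalently, one could use the Kantorovich--Rubinstein dual formulation stated in the excerpt: a test function $\phi$ with $\|\phi\|_{Lip} \leq 1$ in the supremum for $W_1(\CalT \# f, \CalT \# g)$ becomes $\phi \circ \CalT$ against $f - g$, and $\|\phi \circ \CalT\|_{Lip} \leq Lip(\CalT)$ yields the same factor after rescaling; I prefer the coupling version since it avoids any measurability subtleties for $\phi \circ \CalT$.
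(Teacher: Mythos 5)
Your argument is correct, but note that there is no internal proof to compare against: the paper imports this statement verbatim as Lemma 3.18 of \cite{CCR2011} and does not prove it. Your coupling proof is the standard one: push a (near-)optimal plan $\pi^\star$ through $\CalT\times\CalT$, verify the marginals, and apply the pointwise Lipschitz bound; this is sound, and existence of an exactly optimal $\pi^\star$ is not even needed, since an $\varepsilon$-optimal coupling suffices and one lets $\varepsilon\to 0$. Two small points. First, since this paper \emph{defines} $W_1$ through the Kantorovich--Rubinstein dual formula, your route silently invokes the duality theorem identifying the supremum over $1$-Lipschitz test functions with the infimum over couplings; the dual argument you sketch at the end is actually the more economical one in this setting: for $\norm{\phi}_{Lip}\leq 1$ one has $\int_\X \phi \,{\rm d}(\CalT\# f-\CalT\# g)=\int_\X (\phi\circ\CalT) \,{\rm d}(f-g)\leq Lip(\CalT)\, W_1(f,g)$, obtained by rescaling the Lipschitz function $\phi\circ\CalT$ by $Lip(\CalT)$ (the degenerate case $Lip(\CalT)=0$, a constant map, being trivial), and your stated worry about measurability of $\phi\circ\CalT$ is unfounded since it is itself Lipschitz, hence continuous. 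Second, for completeness you should record the one-line fact that $\CalT\# f\in\CalP_1(\X)$, i.e.\ a globally Lipschitz map preserves finiteness of the first moment (using $|\CalT(x)|\leq |\CalT(x_0)|+Lip(\CalT)|x-x_0|$), so that $W_1(\CalT\# f,\CalT\# g)$ is well defined.
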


Applying Lemma~\ref{lem:CCR} to $\CalT = \CalT^t_E$ gives
\begin{lem} \label{lem:bound-W-1}
Let $\hat q^{in}_1, \hat q^{in}_2 \in \CalP_1(\X)$ and $\CalT^t_E$ be the flow map with vector field $E$ given in~\eqref{def:E}. Then
\begin{align*}
   W_1 \vpran{\CalT^t_E \# \hat q^{in}_1, \CalT^t_E \# \hat q^{in}_2}
\leq
   e^{t Lip(E)} W_1\vpran{\hat q^{in}_1, \hat q^{in}_1} \,.
\end{align*}
\end{lem}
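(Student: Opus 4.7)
The plan is to apply the cited Lemma~\ref{lem:CCR} with $\CalT := \CalT^t_E$, so the whole task reduces to estimating $\mathrm{Lip}(\CalT^t_E)$ by $e^{t \,\mathrm{Lip}(E)}$. Since the vector field $E$ in \eqref{def:E} is globally Lipschitz (as already noted in the paragraph after \eqref{def:E}), the Picard--Lindel\"of theorem provides a well-defined flow $\CalT^t_E$ on $\X = \R \times \VV \times (0,1)$, so the push-forward $\CalT^t_E \# \hat q^{in}_i$ makes sense.

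To control the Lipschitz constant of the flow, I would take two initial points $y_1, y_2 \in \X$ and let $Y_i(t) = \CalT^t_E y_i$ be the two trajectories. Writing $Y_i(t) = y_i + \int_0^t E(s, Y_i(s))\ds$ and subtracting gives
\begin{align*}
   \abs{Y_1(t) - Y_2(t)}
   \leq \abs{y_1 - y_2} + \int_0^t \mathrm{Lip}(E)\, \abs{Y_1(s) - Y_2(s)} \ds.
\end{align*}
A standard application of Gr\"onwall's inequality then yields $\abs{Y_1(t) - Y_2(t)} \leq e^{t \, \mathrm{Lip}(E)} \abs{y_1 - y_2}$, which is precisely the statement that $\mathrm{Lip}(\CalT^t_E) \leq e^{t \, \mathrm{Lip}(E)}$.

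With this Lipschitz bound in hand, Lemma~\ref{lem:CCR} immediately gives
\begin{align*}
    W_1\vpran{\CalT^t_E \# \hat q^{in}_1,\, \CalT^t_E \# \hat q^{in}_2}
    \leq \mathrm{Lip}(\CalT^t_E)\, W_1\vpran{\hat q^{in}_1, \hat q^{in}_2}
    \leq e^{t\, \mathrm{Lip}(E)}\, W_1\vpran{\hat q^{in}_1, \hat q^{in}_2},
\end{align*}
which is the desired estimate (correcting the evident typo $\hat q^{in}_1$ on the right-hand side to $\hat q^{in}_2$). There is essentially no obstacle here: the content of the lemma is purely the Gr\"onwall estimate for the characteristic ODE combined with the black-box push-forward inequality of~\cite{CCR2011}, and the only point that requires minor care is verifying that $E$ in \eqref{def:E} is indeed globally Lipschitz on $\X$. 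The polynomial coefficient $N\alpha_0 a(1-a)$ is bounded on $(0,1)$ and has bounded derivative there, and $v$ takes values in the bounded set $\{-v_0, v_0\}$, so each component of $E$ is a product/sum of Lipschitz functions with bounded factors, giving a finite $\mathrm{Lip}(E)$ depending on $G, k_R, N, \alpha_0, v_0, a_0$.
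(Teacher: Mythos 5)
Your proposal is correct and follows essentially the same route as the paper: both reduce the statement via Lemma~\ref{lem:CCR} to bounding $\mathrm{Lip}(\CalT^t_E)$, and both obtain $\mathrm{Lip}(\CalT^t_E) \leq e^{t\,\mathrm{Lip}(E)}$ by a Gr\"onwall estimate along the characteristic ODE (you use the integral form of the ODE, the paper differentiates the distance between two trajectories, which is the same standard argument). Your added remarks on the global Lipschitz property of $E$ and on the typo $W_1(\hat q^{in}_1,\hat q^{in}_1)$ in the stated bound are accurate but not a substantive departure.
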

\begin{proof}
By Lemma~\ref{lem:CCR}, we only need to estimate the Lipschitz bound of $\CalT^t_E$. Let $X= (x, a, v)$. Then
\begin{align*}
   \frac{{\rm d}X}{\dt} = E(X) \,,
\qquad
  X \big|_{t=0} = x \,.
\end{align*}
Therefore, for any given initial states $x, y$, we have
\begin{align*}
    \frac{\rm d}{\dt} \abs{T^t_E(x) - T^t_E(y)}
   = \frac{\rm d}{\dt} \abs{X - Y}
\leq
    \abs{E(X) - E(Y)}
\leq 
    Lip(E) \abs{X - Y} \,.
\end{align*}
By Gronwall's inequality, we have
\begin{align*}
    \abs{T^t_E(x) - T^t_E(y)}
\leq 
   e^{t Lip(E)} \abs{x-y} \,.
\end{align*}

\end{proof}

The main well-posedness result states
\begin{thm}
 \label{thm:well-posedness}
Suppose the intracellular dynamics and tumbling frequency are defined by~\eqref{assump:f-Lambda} with $G$ given. Suppose the initial data $q^{in}$ satisfies
\begin{align*}
    \frac{q^{in}}{N\alpha_0 a(1-a)} \in \CalP_1 \vpran{\R \times \VV \times (0, 1)} \,.
\end{align*}
 Then 
\begin{itemize}
\item[(a)]
for any $T > 0$, equation~\eqref{eq:hat-q} has a unique solution $\hat q \in C([0, T); \CalP_1(\R \times \VV \times \R^+))$ in the sense of~\eqref{def:soln-hat-q}. 

\item[(b)]
Let $S(t)$ be the solution operator to~equation~\eqref{eq:hat-q}. Then the equation is stable in the sense that
\begin{align} \label{bound:stability}
   W_1(S(t)\hat q^{in}_1, S(t) \hat q^{in}_2 )
\leq  e^{(Lip(E) + 2) t} W_1(\hat q^{in}_1, \hat q^{in}_2) \,,
\end{align}
where $Lip(E)$ is the global Lipschitz constant of $E$.


\end{itemize}
\end{thm}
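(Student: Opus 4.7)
The plan is to establish part (a) by a Picard iteration in the complete metric space $(C([0,T];\CalP_1(\X)), W_1)$ and then to read off part (b) from the same contraction estimate applied to two solutions. First I would rewrite the mild formulation in a manifestly mass-preserving way by absorbing the loss part of $\CalL$ into an exponential damping along characteristics; this converts~\eqref{def:soln-hat-q} into
\begin{align*}
\hat q(t) = e^{-t}\,\CalT^t_E\,\#\,\hat q^{in} + \int_0^t e^{-(t-s)}\, \CalT^{t-s}_E\,\#\,\vint{\hat q(s)}_v\,\ds,
\end{align*}
which sends nonnegative measures to nonnegative measures and, upon testing against the constant $1$ (using that push-forward and $v$-averaging preserve total mass), keeps the total mass equal to $1$. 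The finite first moment in $x$ is propagated because $E$ is bounded in its $x$-component, so the iteration stays inside $\CalP_1(\X)$.

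Next, I would define the map $\Phi$ by the right-hand side above and estimate the $W_1$ distance between two iterates using two basic ingredients: the flow contraction from Lemma~\ref{lem:bound-W-1}, namely $W_1(\CalT^t_E \# \mu, \CalT^t_E \# \nu)\leq e^{t\, Lip(E)}W_1(\mu,\nu)$, and the fact that $v$-averaging is $1$-Lipschitz with respect to $W_1$. The latter follows because for any test function $\phi$ on $\X$ with $\norm{\phi}_{Lip}\leq 1$, the averaged function $\bar\phi(x,a)=\vint{\phi(x,a,v)}_v$ is again $1$-Lipschitz in $(x,a)$, since $\dv$ is a normalized measure on $\VV$. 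Combining these bounds with Gronwall's inequality gives a short-time contraction of $\Phi$; concatenation over time then extends this to arbitrary $T>0$, yielding existence, uniqueness, and continuity in $t$ with respect to $W_1$.

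For part (b), applying the same chain of estimates to two genuine solutions $\hat q_i(t)=S(t)\hat q^{in}_i$ gives the closed integral inequality
\begin{align*}
W_1(\hat q_1(t),\hat q_2(t)) \leq e^{t\, Lip(E)}W_1(\hat q^{in}_1,\hat q^{in}_2) + 2\int_0^t e^{(t-s)\, Lip(E)}\,W_1(\hat q_1(s),\hat q_2(s))\,\ds,
\end{align*}
where the prefactor $2$ accounts separately for the $\vint{\hat q}_v$ and $\hat q$ contributions of $\CalL$. A direct Gronwall argument then yields $W_1(\hat q_1(t),\hat q_2(t))\leq e^{(Lip(E)+2)t}W_1(\hat q^{in}_1,\hat q^{in}_2)$, which is exactly~\eqref{bound:stability}.

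The main obstacle I anticipate is handling the signed nature of $\CalL\hat q$ coherently within a probability-measure framework: a naive Picard iteration on~\eqref{def:soln-hat-q} preserves neither positivity nor unit mass, so the reformulation with the $e^{-t}$ factor (equivalently, a Jordan decomposition of $\CalL\hat q$ into gain and loss parts) is essential in order to keep iterates in $\CalP_1(\X)$. A secondary technical point is verifying that $\vint{\hat q}_v$, viewed as a measure on $\X$ that is constant in $v$, is $W_1$-controlled by $\hat q$; this reduces to the Lipschitz behavior of test functions after averaging in $v$ and relies only on $\dv$ being a finite normalized measure on the bounded set $\VV$.
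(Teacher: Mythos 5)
Your proposal is correct, and on the key technical point it takes a genuinely different (and arguably cleaner) route than the paper. The paper runs the fixed-point argument directly on the map $\Gamma \hat q = \CalT^t_E \# \hat q^{in} + \int_0^t \CalT^{t-s}_E \# \CalL \hat q \, \ds$, and copes with the signed nature of $\CalL\hat q$ by introducing the ad hoc space $C([0,T);\CalP_1^+(\X))$ of measures all of whose iterates $\Gamma^k\mu$ stay nonnegative; this forces an initial restriction to regular data and a subsequent density-plus-stability argument to cover general $\hat q^{in}\in\CalP_1(\X)$. Your Duhamel reformulation with the $e^{-t}$ damping (i.e.\ splitting $\CalL$ into gain and loss and integrating the loss along characteristics) makes nonnegativity and conservation of unit mass automatic for every iterate, so the contraction can be run on all of $C([0,T];\CalP_1(\X))$ with no density argument; both approaches otherwise rest on the same two ingredients, the flow estimate of Lemma~\ref{lem:bound-W-1} and the fact that $v$-averaging is $1$-Lipschitz for $W_1$ (which you make explicit and the paper uses implicitly), and your part (b) is essentially identical to the paper's Gronwall argument, yielding exactly~\eqref{bound:stability}. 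Two small points you should spell out: first, since the theorem asserts uniqueness ``in the sense of~\eqref{def:soln-hat-q}'', you need the short variation-of-constants computation showing that fixed points of the damped formulation and solutions of~\eqref{def:soln-hat-q} coincide (routine, because the loss coefficient in~\eqref{def:soln-hat-q} is the constant $1$ and hence commutes with the push-forward); second, note that this constancy is exactly what makes the factor $e^{-(t-s)}$ legitimate --- if the collision frequency $Z(a)$ of~\eqref{eq:hat-q} were kept in the Duhamel term, the damping would have to be $\exp\vpran{-\int_s^t Z(a(\tau))\dtau}$ along characteristics, which is still workable but no longer a scalar prefactor.
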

\begin{proof}
For the ease of notation, in this proof we always denote
\begin{align*}
    \X = \R \times \VV \times (0, 1) \,.
\end{align*}
The well-posedness of~\eqref{def:soln-hat-q} will be shown by a fixed-point argument. We comment that if the initial data is smooth enough, such as $\hat q^{in} \in L^\infty(\X)$ with a bounded second moment, then the well-posedness has been established in the literature \cite{PTV}. In this case one has the maximum principle such that if $\hat q^{in} \geq 0$, then $\hat q \geq 0$ for all $t \in [0, T)$.

For the measure-valued case, denote the operator $\Gamma$ as 
\begin{align*}
   \Gamma \hat q
 = \CalT^t_E \# \hat q^{in} 
      + \int_0^t  \CalT^{t-s}_E \# \CalL \hat q (s, x, a, v) \ds \,.
\end{align*}
Define the space $C([0, T); \CalP_1^+(\X))$ as
\begin{align*}
& \quad \,
    C([0, T); \CalP_1^+(\X)) 
\\
& = \left\{\mu \in C([0, T); \CalP_1(\X)) \Big| \Gamma^k \mu \,\, \text{is a nonnegative measure for any $t \in [0, T)$ and any $k \geq 1$} \right\} \,.
\end{align*}
Note that $C([0, T); \CalP_1^+(\X))$ is non-empty since it contains all the regular solutions with $\hat q^{in} \geq 0$. The metric on $C([0, X); \CalP_1^+(\X))$ is 
\begin{align*}
    d\vpran{\hat q_1, \hat q_2}
 = \sup_{[0, T)} W_1 \vpran{\hat q_1(t, \cdot), \hat q_2(t, \cdot)} \,.
\end{align*}

We will show that $\Gamma$ is a contraction mapping on a convex subset of $C([0, X); \CalP_1^+(\X))$ for $T$ small enough. First, given $\hat q \in C([0, T); \CalP_1^+(\X))$, we verify that $\Gamma \hat q \in C([0, T); \CalP_1^+(\X))$. Indeed, for each fixed $t$, $\Gamma \hat q$ is a nonnegative measure with its two parts satisfying  
\begin{align*}
   \int_\R \int_\VV \int_0^1 \CalT^t_E \# \hat q^{in} (x, a, v) \da \dv \dx = 1\,,
\end{align*}
and
\begin{align*}
   \int_\R \int_\VV \int_0^1 \int_0^t \CalT^{t-s}_E \# \vint{\hat q}_v (x, a) \da \dv \dx 
   = \int_\R \int_\VV \int_0^1 \int_0^t \CalT^{t-s}_E \# \hat q (x, a) \da \dv \dx\,.
\end{align*}
This shows $\Gamma \hat q(t, \cdot)$ is a probability measure for each $t \in [0, T)$.
Furthermore, the first moment of $\Gamma \hat q$ satisfies
\begin{align*}
& \quad \,
   \int_\R \int_\VV \int_0^1 |x| \Gamma \hat q(t, x, a, v) \da \dv \dx
\\
&\leq 
   \int_\R \int_\VV \int_0^1 |x| \CalT^t_E \# \hat q^{in} (x, a, v) \da \dv \dx
   + \int_0^t \int_\R \int_\VV \int_0^1 |x| \CalT^t_E \# \vint{\hat q}_v (x, v, a) \da \dv \dx
\\
& \leq
   \int_\R \int_\VV \int_0^1 (v_0 T + |x|) \hat q^{in} (x, a, v) \da \dv \dx
    + \int_0^T \int_\R \int_\VV \int_0^1 (v_0 T + |x|) \hat q (x, a, v) \da \dv \dx
\\
&< \infty \,,
\end{align*}
where $\Disp v_0 = \max_{\VV}{|v|}$. Therefore $\Gamma \hat q(t, \cdot) \in C([0, X); \CalP_1^+(\X))$. 

Next, let $\hat q_1, \hat q_2 \in C([0, X); \CalP_1^+(\X))$ with $\hat q_1^{in} = \hat q_2^{in}$. Then
\begin{align*}
   \sup_{[0, T)} W_1\vpran{\Gamma \hat q_1, \Gamma \hat q_1}
&\leq 
  2 T \sup_{[0, T)} W_1 \vpran{\CalT^{t}_E \# \hat q_1, \,,
     \CalT^{t}_E \# \hat q_2 (s, x, a, v)}
\\
& \leq 
   2 T e^{T Lip(E)} \sup_{[0, T)}W_1\vpran{\hat q_1, \hat q_2} \,,
\end{align*}
where $Lip(E)$ is the global Lipschitz constant of $E$. Hence if $T$ is small enough,  then $\Gamma$ is a contraction mapping when restricted to the convex subset of $C([0, X); \CalP_1^+(\X))$ with all the elements having the same initial data. Repeating the proof of the contraction mapping theorem, one can find a fixed point $\hat q$ in $C([0, X); \CalP_1^+(\X))$, which satisfies that $\hat q(0, \cdot) = \hat q^{in}$. Therefore for any initial data in $C([0, X); \CalP_1^+(\X))$, equation~\eqref{eq:hat-q} has a unique solution. Moreover, this solution can be extended to and $T > \infty$ since the bound of $T$ is independent of the solution. 

The restriction of the initial data will be removed after we prove the stability of the equation. The stability stated in~\eqref{bound:stability} is shown as follows. Let $\hat q_1, \hat q_2$ be two solutions in $C([0, X); \CalP_1^+(\X))$. Then for each $t \in [0, T)$, we have
\begin{align*}
   W_1(\hat q_1, \hat q_2)
&\leq
   W_1 \vpran{\CalT^t_E \# \hat q^{in}_1, \CalT^t_E \# \hat q^{in}_2}
   + 2\int_0^T W_1 \vpran{\CalT^{t-s}_E \# q_1, \CalT^{t-s}_E \# q_1}
\\
& \leq 
    e^{t Lip(E)} W_1 \vpran{\hat q^{in}_1,\hat q^{in}_2}
    + 2 \int_0^t e^{(t-s) Lip(E)}W_1 \vpran{q_1, q_1}(s) \ds \,.
\end{align*}
Hence, 
\begin{align*}
   e^{-t Lip(E)}W_1(\hat q_1, \hat q_2)
& \leq 
    W_1 \vpran{\hat q^{in}_1,\hat q^{in}_2}
    + 2 \int_0^t e^{-s Lip(E)}W_1 \vpran{q_1, q_1}(s) \ds \,.
\end{align*}
By Gronwall's inequality, we obtain that
\begin{align*}
   W_1(\hat q_1, \hat q_2)
\leq 
  e^{(Lip(E) + 2) t} W_1\vpran{\hat q^{in}_1,\hat q^{in}_2} \,.
\end{align*}
Since the initial data of measures in $C([0, T); \CalP_1^+(\X))$ is dense in $\CalP_1(\X)$, by a density argument and the stability result, equation~\eqref{eq:hat-q} has a unique solution in $C([0, T); \CalP_1^+(\X))$ for any initial data in $\CalP_1(\X)$. 
\end{proof}

\subsection{Asymptotics}  The main idea in proving the convergence is to show that $\vint{q_\Eps}_x$ has no accumulation at the boundary $a=0, 1$, that is, to show that the family of the probability measures $\vint{\hat q_\Eps}_x$ is tight. We will impose extra conditions (in addition to those for the well-posedness) to the initial data for each of the cases. 
%
%
%
%
\subsubsection{\bf Case I: $g = \BigO(1)$}  Recall the scaled equation
\begin{align*} 
   \Eps \del_t q_\Eps + \Eps v \del_x q_\Eps 
   + N \alpha_0 a(1 - a) \del_a \vpran{(-vG + k_R (1 - 2a)) q_\Eps}
   = Z(a) \CalL q_\Eps \,.
\end{align*}
More specifically, the discrete model has the form
\begin{align} 
   \Eps \del_t q_\Eps^+ + \Eps v_0 \del_x q_\Eps^+ 
   - 2 k_R N \alpha_0 a(1 - a) \del_a \vpran{(a - a_1) q_\Eps^+}
   = \frac{Z(a)}{2} (q_\Eps^- - q_\Eps^+)  \,,
   \label{eq:q-1-scaled-plus}
\\
   \Eps \del_t q_\Eps^- - \Eps v_0 \del_x q_\Eps^- 
   + 2 k_R N \alpha_0 a(1 - a) \del_a \vpran{(a_2 - a) q_\Eps^-}
   = \frac{Z(a)}{2} (q_\Eps^+ - q_\Eps^-)  \,,
   \label{eq:q-1-scaled-minus}
\end{align}
where $a_1, a_2$ are defined in~\eqref{def:g-a-1-2}.

We will separate the two cases for $g > 1$ and $0 < g < 1$. First, if $g > 1$, then $a_1 < 0 < a < 1 < a_2$.
\begin{prop} \label{prop:case-1}
Let $Q_0(v, a)$ (or $Q_0^\pm(a)$) be defined in~\eqref{soln:q-0-plus-1} and~\eqref{soln:q-0-minus-1}. Suppose in addition to the assumptions in Theorem~\ref{thm:well-posedness} that the initial condition satisfies
\begin{align} \label{cond:bound-initial-1}
     \vint{q_\Eps(0, \cdot, v, a)}_x \leq \beta_0 \, Q_0(v, a)
\end{align}
for some $\beta_0 > 1$. Then 
\begin{itemize}
\item[(a)] $\vint{q_\Eps(t, \cdot, v, a)}_x \leq \beta_0 \, Q_0(v, a)$ for all $t \geq 0$.  \medskip
\item[(b)] $\vint{q_\Eps}_{x} \to Q_0(v, a)$ as measures. 
\end{itemize}
\end{prop}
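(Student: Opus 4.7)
The plan is to reduce the proposition to a statement about the marginal $\bar q_\Eps^\pm(t, a) := \vint{q_\Eps^\pm(t, \cdot, a)}_x$ by integrating \eqref{eq:q-1-scaled-plus}--\eqref{eq:q-1-scaled-minus} in $x$. The $x$-flux integrates to zero for finite-mass data, leaving the autonomous reaction--transport system
\begin{align*}
  \Eps \del_t \bar q_\Eps^+ - 2 k_R N \alpha_0 a(1-a) \del_a \vpran{(a - a_1) \bar q_\Eps^+} &= \tfrac{Z(a)}{2} \vpran{\bar q_\Eps^- - \bar q_\Eps^+},\\
  \Eps \del_t \bar q_\Eps^- + 2 k_R N \alpha_0 a(1-a) \del_a \vpran{(a_2 - a) \bar q_\Eps^-} &= \tfrac{Z(a)}{2} \vpran{\bar q_\Eps^+ - \bar q_\Eps^-}.
\end{align*}
By Proposition~\ref{prop:leading-order}, $\beta_0 Q_0^\pm(a)$ is a stationary solution of this reduced system. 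A direct analogue of Theorem~\ref{thm:well-posedness} provides well-posedness in the space of non-negative finite measures on $\VV \times (0, 1)$.

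For part (a), I would set $w_\Eps^\pm = \beta_0 Q_0^\pm - \bar q_\Eps^\pm$. By linearity, $(w_\Eps^+, w_\Eps^-)$ solves the same system with initial data $w_\Eps^\pm(0, \cdot) \geq 0$ by \eqref{cond:bound-initial-1}. I would then establish a maximum principle exploiting the cooperative structure: the off-diagonal reaction coefficient $Z(a)/2$ is non-negative, so after multiplying by $e^{\lambda t/\Eps}$ with $\lambda$ large enough, the entire reaction matrix becomes pointwise non-negative. The transport operators $-2 k_R N \alpha_0 a(1-a) \del_a[(a - a_1)\,\cdot\,]$ and $2 k_R N \alpha_0 a(1-a) \del_a[(a_2 - a)\,\cdot\,]$ generate positivity-preserving push-forward semigroups: since $g > 1$ gives $a_1 < 0 < 1 < a_2$, the corresponding velocity fields have fixed signs on $(0, 1)$, vanish at $a = 0, 1$, and their characteristics never leave $(0, 1)$ in finite time. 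Picard iteration of the Duhamel representation therefore preserves non-negativity at every step, yielding $w_\Eps^\pm \geq 0$ in the limit.

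For part (b), the bound from (a) gives $\vint{\hat q_\Eps}_x \leq \beta_0 \hat Q_0$ where $\hat Q_0 = Q_0/(N\alpha_0 a(1-a))$. Since $\hat Q_0$ is an integrable probability density (the algebraic decay exponents $\theta_0, \theta_1$ near $a = 0, 1$ from the lemma after Proposition~\ref{prop:leading-order} are strictly positive), the family $\{\vint{\hat q_\Eps}_x\}_\Eps$ is tight, hence weakly precompact in $\CalP_1(\VV \times [0, 1])$. Along a convergent subsequence $\vint{\hat q_{\Eps_k}}_x \rightharpoonup \hat \mu$, I would pass to the limit in the weak formulation of the reduced system against test functions $\zeta(t) \psi(a) \in C_c^\infty((0, \infty) \times (0, 1))$: the $\Eps \del_t$ term contributes $O(\Eps_k)$ and drops out, while the remaining terms converge by weak convergence and the smoothness of the coefficients on $(0, 1)$. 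The limit $\hat \mu$ therefore satisfies the stationary ODE \eqref{eq:q-0-1}, so by Proposition~\ref{prop:leading-order}, $\hat \mu = c\, \hat Q_0$ for some $c \geq 0$. Mass conservation for $\hat q_\Eps$, combined with the tightness that rules out loss of mass to the boundary, forces $c = 1$. Since the limit is unique, the full family converges, giving $\vint{q_\Eps}_x \to Q_0$ as measures.

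The main obstacle is the degeneracy of the transport coefficients at $a = 0, 1$: without a pointwise upper bound on $\bar q_\Eps$, one cannot exclude concentration of mass at the endpoints as $\Eps \to 0$, and neither tightness nor the identification of the limit with $Q_0$ would survive. The barrier argument in part (a) therefore carries the entire weight of the analysis, and the subtle step inside (a) is the cooperative-system Duhamel iteration in the space of measures. Once $\bar q_\Eps \leq \beta_0 Q_0$ is established, part (b) is a standard weak-compactness plus limit-identification argument; the cases $0 < g < 1$ and $g = 0$ should then follow by an entirely parallel construction with $Q_0$ replaced by its corresponding form in parts (b)--(c) of Proposition~\ref{prop:leading-order}.
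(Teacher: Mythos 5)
Your proposal is correct, and its overall architecture (a barrier argument with the stationary profile $\beta_0 Q_0$ for part (a), then tightness plus identification of the limit for part (b)) coincides with the paper's; part (b) in particular is essentially identical, down to the use of linearity, the vanishing $\Eps\del_t$ term, and the uniqueness of the normalized solution of \eqref{eq:q-0-1} from Proposition~\ref{prop:leading-order} to upgrade subsequential to full convergence. Where you differ is the mechanism for the comparison principle in part (a): you set $w_\Eps^\pm=\beta_0 Q_0^\pm-\vint{q_\Eps^\pm}_x$ and prove positivity preservation of the linear cooperative system via the damped-transport-plus-nonnegative-coupling Duhamel (Dyson--Phillips) series, using that for $g>1$ the $a$-characteristics stay in $(0,1)$ and $Z$ is bounded; the paper instead runs a Kato-type estimate on the positive part $(\vint{q_\Eps}_x-\beta_0 Q_0)^+$, exploiting $\vint{(\mathrm{sgn}^+ g)\,\CalL g}_v\le 0$ and integrating \eqref{eq:q-1-scaled-plus}--\eqref{eq:q-1-scaled-minus} against the weight $1/(a(1-a))$, first for $L^1$ data and then for measures by the stability bound of Theorem~\ref{thm:well-posedness}. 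Both are legitimate; your semigroup-positivity route avoids the weighted sign-function computation but should, to be airtight, say a word about uniqueness in the class of finite signed measures so that the Duhamel series really is the solution $w_\Eps$ (the paper handles the analogous issue by its $L^1$-plus-density step). One small caution about your closing remark: the case $0<g<1$ is \emph{not} handled in the paper by the same comparison with $Q_0$ — since there $Q_0$ is supported on $[a_1,a_2]$, the hypothesis $\vint{q_\Eps(0,\cdot)}_x\le\beta_0 Q_0$ would be unreasonably restrictive — and the paper instead uses the convex Lyapunov weight $\phi$ of Proposition~\ref{prop:case-2} to obtain tightness; this does not affect the validity of your proof of the stated ($g>1$) proposition.
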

\begin{proof} 
(a) The measure $\vint{q_\Eps}_x$ satisfies the equation (in the sense of distributions)
\begin{align*} 
   \Eps \del_t \vint{q_\Eps}_x  
   + N \alpha_0 a(1 - a) \del_a \vpran{(-vG + k_R (1 - 2a)) \vint{q_\Eps}_x}
   = Z(a) \CalL \vint{q_\Eps}_x \,.
\end{align*}
Fix $\Eps > 0$. We first use the same argument for proving the maximum principle for transport equations for the initial data $q_\Eps \in L^1(\R \times \VV \times (0, 1))$. To this end, let
\begin{align*}
   (\vint{q_\Eps}_x - \beta_0 Q_0)^+
   = \begin{cases}
       \vint{q_\Eps}_x -\beta_0 Q_0 \,, & \text{if $\vint{q_\Eps}_x -\beta_0 Q_0 > 0$,} \\[2pt]
       0 \,, & \text{otherwise} \,.
       \end{cases}
\end{align*}
Then $(\vint{q_\Eps}_x - \beta_0 Q_0)^+$ satisfies (in the sense of distributions)
\begin{align} 
& \quad \,
   \Eps \del_t (\vint{q_\Eps}_x - \beta_0 Q_0)^+ 
   + N \alpha_0 a(1 - a) \del_a \vpran{(-vG + k_R (1 - 2a)) (\vint{q_\Eps}_x - \beta_0 Q_0)^+} \nn
\\
&   = Z(a) \vpran{\text{sgn}^+(\vint{q_\Eps}_x - \beta_0 Q_0)} \CalL \vpran{\vint{q_\Eps}_x - \beta_0 Q_0} \,, \label{eq:q-beta-p-0-plus}
\end{align}
where the positive sign function is
\begin{align*}
   \text{sgn}^+(\vint{q_\Eps}_x - \beta_0 Q_0)
   = \begin{cases}
       1 \,, & \text{if $\vint{q_\Eps}_x - \beta_0 Q_0 > 0$,} \\[2pt]
       0 \,, & \text{otherwise} \,.
       \end{cases}
\end{align*}
By the definition of $\CalL$, we have
\begin{align*}
   \vint{\vpran{\text{sgn}^+ g} \CalL g}_v
= \vpran{\text{sgn}^+ g} \int_\VV g(v) \dv - \int_\VV g^+ \dv
\leq 
   \int_\VV g^+ \dv - \int_\VV g^+ dv = 0 \,.
\end{align*}
Therefore, if we integrate~\eqref{eq:q-beta-p-0-plus} with respect to $x, v, a$ with weight $\Disp\frac{1}{a(1-a)}$, then
\begin{align*}
   \Eps \del_t \int_\R \int_\VV \int_0^1
       \frac{(\vint{q_\Eps}_x - \beta_0 Q_0)^+}{a(1-a)} \da \dv \dx
   \leq 0 \,.
\end{align*}
Since initially $(\vint{q_\Eps(0, \cdot, v, a)}_x - \beta_0 Q_0)^+ = 0$, we have $(\vint{q_\Eps}_x - \beta_0 Q_0)^+ = 0$ for all $t \geq 0$, which proves the upper bound in part (a) if the initial data is in $L^1(\R \times \VV \times (0, 1))$. For each fixed $\Eps$, the stability result~\eqref{bound:stability} applies (with $Lip(E)$ changed to $\frac{1}{\Eps}Lip(E)$). Thus we can extend to the general case by a density argument. 

\medskip\smallskip

\Ni (b) The bound in part (a) implies that the family of probability measures $\left\{\vint{\frac{q_\Eps}{N\alpha_0 a (1-a)}}_{x} \right\}$ is tight (in $v, a$) since $Q_0$ decays algebraically at $a = 0, 1$. Thus there exists a subsequence $\vint{q_{\Eps_k}}_{x}$ and a probability measure $\tilde Q_0(v, a)$ such that $\vint{\frac{q_{\Eps_k}}{N\alpha_0 a (1-a)}}_{x} \to \tilde Q_0$ as measures. This also gives
\begin{align*}
    \vint{q_{\Eps_k}}_v \to N \alpha_0 a(1-a) \tilde Q_0 
\qquad
   \text{as measures.}
\end{align*}
Since equations~\eqref{eq:q-1-scaled-plus}-\eqref{eq:q-1-scaled-minus} are linear, the limit $N \alpha_0 a(1-a)\tilde Q_0$ must satisfy the ODE~\eqref{eq:q-0-1}. By the condition that $\tilde Q_0$ is a probability measure and the uniqueness of solutions to \eqref{eq:q-0-1} with the normalization, we conclude that the full sequence $\vint{q_\Eps}_x \to Q_0$ as measures. 
\end{proof}
\begin{rmk}
Bound \eqref{cond:bound-initial-1} is the same as assuming $q_\Eps(0, \cdot, \cdot, \cdot) \in L^\infty(\R \times \VV \times (0, 1))$ and has at least the same algebraic decay rate as the leading order $Q_0$ at $a=0, 1$. 
\end{rmk}

Next we consider the case where $0 < g < 1$. In this case we have $0 < a_1 < 1/2 < a_2 < 1$. 
\begin{prop} \label{prop:case-2}
Let $Q_0(v, a)$ (or $Q_0^\pm(a)$) be defined in~\eqref{soln:q-0-plus-1}-\eqref{soln:q-0-minus-1} with the condition that $\frac{v_0}{k_R}G < 1$.
Let $\phi \in C^1(0, 1)$ be a convex function which satisfies that 
\begin{itemize}
\item
$\phi = 0$ on $[a_1, a_2]$,
\item
$\phi$ is decreasing on $(0, a_1]$ and $\phi(a) \to \infty$ as $a \to 0$,
\item
$\phi$ is increasing on $[a_2, 1)$ and $\phi(a) \to \infty$ as $a \to 1$.
\end{itemize}
Suppose in addition to the assumptions in Theorem~\ref{thm:well-posedness}, the initial data $q_\Eps(0, x, v, \cdot)$ satisfies the bound
\begin{align} \label{cond:initial}
   \vint{\phi(a) \frac{q_\Eps(0, x, v, \cdot)}{N\alpha_0 a (1-a)}}_{x, v, a} < \beta_1 < \infty \,,
\end{align}   
where the constant $\beta_1 > 0$ is independent of $\Eps$. Then 

\medskip
\Ni (a) for all $t \geq 0$, it holds that
\begin{align*}
    \vint{\phi(a) \frac{q_\Eps(t, x, v, \cdot)}{N\alpha_0 a (1-a)}}_{x, v, a} < \beta_1 \,.
\end{align*}

%

\medskip
\noindent (b) $\vint{q_\Eps}_{x} \to Q_0(v, a)$ as measures. 
\end{prop}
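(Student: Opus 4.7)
The plan is to follow the template of Proposition~\ref{prop:case-1}: in part (a) we exhibit $\int \phi(a)\, \hat q_\Eps$ as a Lyapunov functional for the scaled equation, which yields a uniform-in-$\Eps$ tail bound on the $x$-marginal of $\hat q_\Eps$ near $a = 0, 1$; in part (b) we use this tightness to extract a weak subsequential limit of $\vint{\hat q_\Eps}_x$ and identify it with $Q_0$ via the uniqueness half of Proposition~\ref{prop:leading-order}(b).

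For part (a), integrate~\eqref{eq:q-1-scaled-plus}-\eqref{eq:q-1-scaled-minus} in $x$; since $q_\Eps \in \CalP_1(\R \times \VV \times (0,1))$, the spatial flux terms drop out and the marginals $Q_\Eps^\pm \Denote \vint{q_\Eps^\pm}_x$ satisfy the reduced $a$-ODE system. Multiply each equation by $\phi(a)/(N\alpha_0 a(1-a))$, integrate in $a$, and add. The two tumbling contributions cancel because their multipliers coincide up to the $\pm$ sign flip, and integration by parts on the advection terms produces
\begin{align*}
   \Eps \del_t \int_0^1 \frac{\phi(a) \vpran{Q_\Eps^+ + Q_\Eps^-}}{N \alpha_0 a (1-a)}\, \da
   = -2 k_R \int_0^1 \phi'(a) \bigl[(a - a_1) Q_\Eps^+ - (a_2 - a) Q_\Eps^-\bigr]\, \da \,.
\end{align*}
The right-hand integrand is pointwise non-negative: on $[a_1, a_2]$ it vanishes since $\phi' \equiv 0$; on $[0, a_1]$ one has $\phi' \leq 0$, $(a - a_1) \leq 0$, $(a_2 - a) > 0$, so the bracket is $\leq 0$ and its product with $\phi'$ is $\geq 0$; on $[a_2, 1]$ all three signs flip consistently and the product is again $\geq 0$. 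Hence the functional is non-increasing in $t$, which gives the bound stated in (a).

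For part (b), combining the bound in (a) with $\phi(a) \to \infty$ as $a \to 0, 1$ yields the tightness estimate
\begin{align*}
   \vint{\hat q_\Eps}_x \bigl([0, \eta) \cup (1 - \eta, 1]\bigr)
   \leq \frac{\beta_1}{\inf_{a \notin (\eta, 1-\eta)} \phi(a)} \,,
\end{align*}
uniformly in $\Eps$. Since $\VV$ is finite, Prokhorov's theorem gives weak subsequential convergence $\vint{\hat q_{\Eps_k}}_x \to \tilde Q_0$ at each time $t$ to a probability measure on $\VV \times (0, 1)$. Linearity of~\eqref{eq:q-1-scaled-plus}-\eqref{eq:q-1-scaled-minus} together with $\Eps \del_t \vint{q_\Eps}_x \to 0$ distributionally imply that $N \alpha_0 a (1-a) \tilde Q_0$ is a probability-measure solution of the ODE~\eqref{eq:q-0-1}. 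Proposition~\ref{prop:leading-order}(b) then forces $\tilde Q_0 = Q_0$, and uniqueness of the limit point upgrades the subsequential convergence to convergence of the full family.

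The main technical obstacle is the integration by parts in part (a): because $\phi$ blows up at $a = 0, 1$, the boundary contributions $\phi(a)(a - a_1) Q_\Eps^+$ and $\phi(a)(a_2 - a) Q_\Eps^-$ at $a = 0, 1$ do not obviously vanish for merely measure-valued solutions. We resolve this by a density argument paralleling the end of the proof of Proposition~\ref{prop:case-1}(a): first verify the monotonicity identity for smooth initial data supported compactly in $(0, 1)$, for which compactness of $a$-support is preserved along the flow (the $a$-ODE has $a = 0$ and $a = 1$ as equilibria and the tumbling operator acts only in $v$), and then pass to general initial data using the stability bound~\eqref{bound:stability} together with lower semicontinuity of $\hat q \mapsto \int \phi\, \hat q$ under weak convergence of measures.
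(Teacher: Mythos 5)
Your proposal is correct and follows essentially the same route as the paper: part (a) is the same $\phi$-weighted estimate obtained by multiplying \eqref{eq:q-1-scaled-plus}--\eqref{eq:q-1-scaled-minus} by $\phi/(N\alpha_0 a(1-a))$, cancelling the tumbling terms, integrating by parts and using the sign of $\phi'(a)(a-a_1)$ and $\phi'(a)(a_2-a)$ outside $[a_1,a_2]$; part (b) is the same tightness--linearity--uniqueness argument borrowed from Proposition~\ref{prop:case-1}(b). Your explicit treatment of the boundary terms at $a=0,1$ (compactly supported data preserved by the characteristics, then density via \eqref{bound:stability} and lower semicontinuity of $\int\phi\,{\rm d}\mu$) is a slightly more careful version of the paper's terse density argument, but it is the same idea.
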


\begin{proof}
(a) Similar as in Proposition~\ref{prop:case-1}, we only need to consider the initial data in $L^1(\X \times \VV \times (0, 1))$ and then apply the density argument for each fixed $\Eps$. Multiply \eqref{eq:q-1-scaled-plus} and \eqref{eq:q-1-scaled-minus} by $\phi$ and integrate in $x, a$. Then
\begin{align*}
& \quad \,
  \Eps \del_t \int_\R \int_0^1 \phi(a) \vpran{q_\Eps^+ (t, x, a) + q_\Eps^-(t, x, a)} \frac{1}{a(1-a)} \dx\da
\\
&  = - 2k_R N \alpha_0 \int_\R \int_0^1 \phi'(a) (a - a_1) q_\Eps^+ (a) \dx\da
     + 2 k_R N \alpha_0 \int_\R \int_0^1 \phi'(a) (a_2 - a) q_\Eps^- (a) \dx\da
\leq 0 \,. 
\end{align*}
Therefore we have
\begin{align*}
& \quad \,
  \int_\R \int_0^1 \phi(a) \vpran{q_\Eps^+ (t, x, a) + q_\Eps^-(t, x, a)} \frac{1}{a(1-a)} \dx\da
\leq  \vint{\phi q_\Eps(0, x, v, \cdot)}_{v, a} < \beta_1 \,.
\end{align*}

\Ni (b) The bound in part (a) again shows that the family of probability measures $\left\{\vint{\frac{q_\Eps}{N \alpha_0 a (1-a)}}_x \right\}$ is tight. Thus by a similar argument as in part (b) for Proposition~\ref{prop:case-1}, we have $\vint{q_\Eps}_{x} \to Q_0(v,a)$ as measures. 
\end{proof}

\subsubsection{\bf Case II: $g = \BigO\vpran{\Eps^\mu}$ with $0 < \mu \leq 1$} The scaled equation in this case is
\begin{align} \label{eq:q-2-scaled-recall}
   \Eps^{1+\mu} \del_t q_\Eps + \Eps v \del_x q_\Eps 
   + N \alpha_0 a(1-a) \del_a \vpran{\vpran{-v \Eps^\mu G_\mu + k_R (1 - 2a)} q_\Eps}
   = Z(a) \CalL{q_\Eps} \,.
\end{align}
The main result for Case II is 
\begin{prop}
Suppose $\Eps$ is small enough and $q_\Eps$ is a measure-valued solution to~\eqref{eq:q-2-scaled-recall}. Suppose the initial data $q_\Eps(0, x, v, a)$ satisfies the bound~\eqref{cond:initial}.
\begin{itemize}
\item[(a)] If $0 < \mu < 1$, then we have $q_\Eps \to q_0$ as measures where $q_0 = \rho_0(t, x) \delta_{1/2}(a)$ and $\rho_0$ satisfies the transport equation~\eqref{eq:macro-2}.

\item[(b)] If $\mu = 1$, then we have $q_\Eps \to q_0$ as measures where $q_0 = \rho_0(t, x) \delta_{1/2}(a)$ and $\rho_0$ satisfies the Keller-Segel equation~\eqref{eq:KS}.

\end{itemize}
\end{prop}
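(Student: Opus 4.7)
The plan is to combine the tightness argument from Propositions~\ref{prop:case-1}--\ref{prop:case-2} with a flux identity obtained by testing the equation against a single auxiliary function, and then to pass to the limit in the continuity equation. I would carry out the following four steps.

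First I would propagate the $\phi$-bound~\eqref{cond:initial} uniformly in $\epsilon$. Multiplying~\eqref{eq:q-2-scaled-recall} by $\phi(a)/(N\alpha_0 a(1-a))$ and integrating in $x,v,a$, an integration by parts in $a$ produces a non-positive right-hand side because $\phi'$ and the drift $-v\epsilon^\mu G_\mu+k_R(1-2a)$ have opposite signs on $(0,a_1)\cup(a_2,1)$ once $\epsilon$ is small. Combined with mass conservation and spatial moments propagated at speed $v_0$, this yields tightness of $\{\hat q_\epsilon\}_\epsilon$ in $\CalP_1(\X)$. Extracting a weakly convergent subsequence $q_{\epsilon_k}\rightharpoonup q_0$ and passing to the limit in the equation kills the $\epsilon$-weighted derivatives and the $\epsilon^\mu v G_\mu$ term, so $q_0$ solves~\eqref{eq:q-0-1} with $g=0$. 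By Proposition~\ref{prop:leading-order}(c) and uniqueness of the normalized solution, $q_0 = \tfrac{N\alpha_0}{4}\rho_0(t,x)\delta_{1/2}(a)$, and the full family converges.

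To extract the macroscopic equation I would then test~\eqref{eq:q-2-scaled-recall} against the auxiliary function $\chi(v,a) = -v/(Z(a)N\alpha_0 a(1-a))$. This choice is dictated by the requirement that the collision integral on the right-hand side collapse to $J_\epsilon$: self-adjointness of $\CalL$ combined with the identity $\CalL(-v)=v$ makes this happen automatically. After one integration by parts in $a$, the left-hand side decomposes into (i) an $O(\epsilon^{1+\mu})$ time remainder, (ii) a spatial term $\epsilon\del_x\vint{\vint{v^2 q_\epsilon/Z(a)}}_{v,a}$ whose limit is $\epsilon D_0\del_x\rho_0$ with $D_0=v_0^2/Z(1/2)$, (iii) an $O(\epsilon^\mu)$ advection piece $\epsilon^\mu G_\mu\vint{\vint{v^2 Z'(a)/Z(a)^2\, q_\epsilon}}_{v,a}$ whose limit produces $\epsilon^\mu\kappa_3\rho_0$ via $(1/Z)'=-Z'/Z^2$ and $\vint{q_\epsilon}_v\to\tfrac{N\alpha_0}{4}\rho_0\delta_{1/2}$, and (iv) a cross term $\vint{\vint{v(1-2a)Z'(a)/Z(a)^2\, q_\epsilon}}_{v,a}$ which is $o(\epsilon^\mu)$ because $|1-2a|\le C\epsilon^\mu$ on the effective support of $q_\epsilon$. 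Substituting the resulting expansion of $J_\epsilon$ into the continuity equation $\epsilon^\mu\del_t\rho_\epsilon+\del_x J_\epsilon=0$ (which is~\eqref{eq:macro} with $\beta-1=\mu$) and dividing by $\epsilon^\mu$ when $0<\mu<1$ or by $\epsilon$ when $\mu=1$ yields the limit equation: case (a) keeps only the $\kappa_3$ contribution and produces~\eqref{eq:macro-2}, while case (b) retains both the $D_0\del_x\rho_0$ and $\kappa_3\rho_0$ terms and produces the Keller--Segel equation~\eqref{eq:KS}.

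The hardest step will be items (iii) and (iv) above: passing to the limit in integrals of smooth functions of $a$ against the measure $\vint{q_\epsilon}_v$, and quantifying the support concentration so that the $v(1-2a)$ cross term is genuinely higher order. The $\phi$-bound supplies uniform integrability at the endpoints $a=0,1$, but controlling the width of the effective support around $a=1/2$ and the smallness of $q_\epsilon^+-q_\epsilon^-$ requires a second moment identity obtained by testing with $v$ itself, which upgrades the formal ansatz $q_\epsilon = q_0+\epsilon^\mu q_1+\cdots$ to a quantitative weak-norm bound of size $O(\epsilon^\mu)$.
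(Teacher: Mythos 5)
Your overall architecture coincides with the paper's: tightness from the $\phi$-bound and identification of the limit through Proposition~\ref{prop:leading-order}(c), then a flux identity obtained by testing~\eqref{eq:q-2-scaled-recall} against $\phi_1(t,x)\, v/\bigl(Z(a)N\alpha_0 a(1-a)\bigr)$ (your $\chi$, up to sign and the localizer $\phi_1$), and finally closure of the continuity equation after dividing by $\Eps^\mu$; your items (i)--(iii) are exactly the terms of the paper's identity~\eqref{eq:first-moment-3} and their limits. The gap is in item (iv). You discard the cross term $k_R v_0\int \phi_1\,(1/Z)'(a)\,(1-2a)\,(q_\Eps^+-q_\Eps^-)\,\da\,\dx\,\dtau$ on the grounds that $|1-2a|\le C\Eps^\mu$ ``on the effective support of $q_\Eps$.'' For the measure-valued solutions considered here this support statement is not available: only the formal leading-order profile is supported in $[a_1,a_2]=[1/2-\Eps^\mu g_\mu,\,1/2+\Eps^\mu g_\mu]$, whereas $q_\Eps$ itself, with initial data merely satisfying~\eqref{cond:initial}, carries mass outside this interval, and weak convergence of $\vint{q_\Eps}_v$ to a point mass at $a=1/2$ gives no rate. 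Since the whole identity is divided by $\Eps^\mu$, you need the quantitative estimate $\int_0^T\!\!\int_\R\!\int_0^1 |1-2a|\,(q_\Eps^+ + q_\Eps^-)\,\da\,\dx\,\dt = o(\Eps^\mu)$, i.e.\ the paper's~\eqref{limit:Case-3}; without it neither the cross term nor the identification of the limit of the $\Eps^\mu G_\mu$-term is justified.

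Moreover, the tool you propose for this step --- ``a second moment identity obtained by testing with $v$ itself'' --- does not produce the needed bound: testing with $v$ (or $v/Z$) just reproduces the flux identity, because the tumbling operator then contributes at order one and no signed dissipation in $a$ appears. The estimate actually comes from testing with an even function of $a$ alone, summed over $v$, so that the collision term drops out entirely; the paper uses the regularization $\eta(a)=(1-2a)^2/\sqrt{\delta+(1-2a)^2}$ of $|1-2a|$. The adaptation drift then yields the coercive term $-k_R\int \eta'(a)(1-2a)(q_\Eps^++q_\Eps^-)\approx 2k_R\int|1-2a|(q_\Eps^++q_\Eps^-)$ with a definite sign, while the remaining contributions are of size $\Eps^{1+\mu}$ (time derivative) and $\Eps^\mu G_\mu$ times an integral of $q_\Eps^+-q_\Eps^-$ against a bounded function vanishing at $a=1/2$; letting $\delta\to 0$ and using $q_\Eps^+-q_\Eps^-\rightharpoonup 0$ upgrades $O(\Eps^\mu)$ to $o(\Eps^\mu)$, which is what the division by $\Eps^\mu$ requires. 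If you replace your support claim and the ``test with $v$'' suggestion by this $a$-only test-function argument (or an equivalent quantitative concentration estimate), the rest of your plan, including the $\mu=1$ case where the $\Eps\,\del_x\vint{\vint{v^2 q_\Eps/Z}}_{v,a}$ term survives and yields $D_0=v_0^2/Z(1/2)$, goes through as in the paper.
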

\begin{proof}
The convergence of $q_\Eps$ follows from a similar proof as for Case I with $0 < g < 1$ since we also have in Case II the condition that $0 < a_1 < 1/2 < a_2 < 1$. 

\medskip
\Ni (a) In order to show that $\rho_0$ satisfies equation~\eqref{eq:macro-2}, we first show some uniform-in-$\Eps$ estimate for $q_\Eps$.  Let $\delta >0$ be arbitrary. Let 
\begin{align*}
   \eta(a) = \frac{(1-2a)^2}{\sqrt{\delta + (1-2a)^2}} \,.
\end{align*}
Multiply $\eta(a)$ to equation~\eqref{eq:q-2-scaled-recall}, integrate in $x, a$, and add the two equations. This gives
\begin{align*}
& \quad \, 
   \Eps^{1+\mu} \del_t \int_\R \int_0^1
       \frac{\eta'(a) \vpran{q^+_\Eps + q^-_\Eps} }{N \alpha_0 a (1-a)} \da \dx
  - k_R \int_\R \int_0^1
        \eta'(a) (1-2a) \vpran{q^+_\Eps + q^-_\Eps}   \da\dx 
\\
&  = - \Eps^\mu v_0 G_\mu \int_\R 
         \int_0^1 \eta'(a) \vpran{q^+_\Eps - q^-_\Eps} \da \dx\,.
\end{align*}
where 
\begin{align*}
   \eta'(a) = -2 \frac{(1-2a)^3 + 2(1-2a) \delta}{\vpran{\sqrt{\delta + (1-2a)^2}}^3} \,.
\end{align*}
Therefore, by integrating in time we have
\begin{align} \label{bound:a-priori}
& \quad \,
   - k_R \int_0^T\int_\R \int_0^1
        \frac{\eta'(a) (1-2a)}{\Eps^\mu} \vpran{q^+_\Eps + q^-_\Eps}   \da\dx\dt \nn
\\
& = 
    2 k_R \int_0^T\int_\R \int_0^1
        \frac{(1-2a)^4 + 2 (1-2a)^2 \delta}{\vpran{\sqrt{\delta + (1-2a)^2}}^3}  \,\frac{q^+_\Eps + q^-_\Eps}{\Eps^\mu}  \da\dx\dt \nn
\\
& \leq  \Eps \int_\R \int_0^1
       \frac{|1- 2a| \vpran{q^+_\Eps(0, x, a) + q^-_\Eps(0, x, a)} }{N \alpha_0 a (1-a)} \da \dx 
\\
& \hspace{0.3cm} + 2v_0 G_\mu \int_0^T \int_\R 
         \int_0^1 \frac{(1-2a)^3 + 2(1-2a) \delta}{\vpran{\sqrt{\delta + (1-2a)^2}}^3} \vpran{q^+_\Eps - q^-_\Eps} \da \dx \dt \,, \nn
\end{align}
for any $\delta>0$. Note that for each $\delta>0$, we have
\begin{align*}
   \int_0^1 \frac{(1-2a)^3 + 2(1-2a) \delta}{\vpran{\sqrt{\delta + (1-2a)^2}}^3} \dsigma
= \int_{(0, 1/2)} \frac{(1-2a)^3 + 2(1-2a) \delta}{\vpran{\sqrt{\delta + (1-2a)^2}}^3} \dsigma
   + \int_{(1/2,1)} \frac{(1-2a)^3 + 2(1-2a) \delta}{\vpran{\sqrt{\delta + (1-2a)^2}}^3} \dsigma \,,
\end{align*}
for any probability measure $\sigma$. Let $\delta \to 0$. Then we have
\begin{align*}
& \quad \,
    2 k_R \int_0^T\int_\R \int_0^1
        \frac{|1-2a|}{\Eps^\mu}  \,\vpran{q^+_\Eps + q^-_\Eps}  \da\dx\dt \nn
\\
& \leq  \Eps \int_\R \int_0^1
       \frac{|1- 2a| \vpran{q^+_\Eps(0, x, a) + q^-_\Eps(0, x, a)} }{N \alpha_0 a (1-a)} \da \dx 
   + 2v_0 G_\mu \int_0^T \int_\R \int_{(0, 1/2)} (-1)  \vpran{q^+_\Eps - q^-_\Eps} \da \dx \dt
\\
& \hspace{0.3cm}
   + 2v_0 G_\mu \int_0^T \int_\R \int_{(1/2, 1)}  \vpran{q^+_\Eps - q^-_\Eps} \da \dx \dt \,.
\end{align*}
As a consequence, if we let $\Eps \to 0$, then
\begin{align} \label{limit:Case-3}
    2 k_R \int_0^T\int_\R \int_0^1
        \frac{|1-2a|}{\Eps^\mu}  \,\vpran{q^+_\Eps + q^-_\Eps}  \da\dx\dt
    \to 0
\quad
   \text{as $\Eps \to 0$.}   
\end{align}
Hence we have the limit
\begin{align*}
    q^+_\Eps + q^-_\Eps \to \rho_0(t, x) \delta_{1/2}(a) 
\qquad
   \text{as $\Eps \to 0$,}
\end{align*}
where $\rho_0$ is a probability measure. 
Let $\phi_1(t,x) \in C^\infty_c((0, T) \times \R)$. 
Multiply $\phi_1 v/Z(a)$ to equation~\eqref{eq:q-2-scaled-recall} and integrate in $(t, x, v, a)$. This gives
\begin{align} \label{eq:first-moment-3}
&   \Eps^{1+\mu} \vpran{\int_\R \int_0^1 
   \frac{\phi_1(t, x) v_0 \vpran{q^+_\Eps - q^-_\Eps}}
   {Z(a) N \alpha_0 a (1-a)} \da \dx \dt
   - 
   \int_\R \int_0^1 
   \frac{\phi_1(0, x) v_0 \vpran{q^+_\Eps(0, x, a) - q^-_\Eps(0, x, a)}}
   {Z(a) N \alpha_0 a (1-a)} \da \dx} \nn
\\
&  - \Eps^{1+\mu} \int_0^t \int_\R \int_0^1 
   \frac{\del_t\phi_1(\tau, x) v_0 \vpran{q^+_\Eps - q^-_\Eps}}
   {Z(a) N \alpha_0 a (1-a)} \da \dx \dtau
  - \Eps \int_0^t \int_\R \int_0^1
      \frac{\del_x\phi_1(\tau, x) v_0^2 \vpran{q^+_\Eps + q^-_\Eps}}
   {Z(a) N \alpha_0 a (1-a)} \da \dx \dtau \nn
\\ 
&  + \Eps^\mu v_0^2 G_\mu \int_0^t \int_\R \int_0^1 
       \phi_1(\tau,x) \vpran{\frac{1}{Z(a)}}' 
       \vpran{q^+_\Eps + q^-_\Eps} \da\dx\dtau 
\\
&  - k_R v_0 \int_0^t \int_\R \int_0^1
      \phi_1(\tau,x) \vpran{\frac{1}{Z(a)}}' (1 - 2a) \vpran{q^+_\Eps - q^-_\Eps} \da\dx\dtau \nn
\\
& = \int_0^t \int_\R \int_0^1 
       \frac{\phi_1(\tau, x) v_0 \vpran{q^+_\Eps - q^-_\Eps}}
       {N \alpha_0 a (1-a)}  \da\dx\dtau \,. \nn
\end{align}
Divide equation~\eqref{eq:first-moment-3} by $\Eps^\mu$ and pass $\Eps$ to zero. By the assumption that $0 < \mu < 1$, the first 4 terms on the left-hand side of~\eqref{eq:first-moment-3} vanish. The fifth term on the left satisfies the limit
\begin{align} \label{limit:1}
& \quad \,
   v_0^2 G_\mu \int_0^t \int_\R \int_0^1 
       \phi_1(\tau,x) \vpran{\frac{1}{Z(a)}}' 
       \vpran{q^+_\Eps + q^-_\Eps} \da\dx\dtau \nn
\\
&\to
   v_0^2 G_\mu \int_0^t \int_\R \int_0^1 
       \phi_1(\tau,x) \vpran{\frac{1}{Z(a)}}' 
       \rho_0(t, x) \delta_{1/2}(a) \da\dx\dtau
\\
& = \frac{N\alpha_0}{4}v_0^2 G_\mu \vpran{\frac{1}{Z(a)}}' \Big|_{a=1/2}  \int_0^t \int_\R  
        \phi_1(t, x) \rho_0(t, x) \dx\dt \,, \nn
\end{align}
By~\eqref{limit:Case-3}, the sixth term on the left-hand side of~\eqref{eq:first-moment-3} also vanishes. 
Summarizing all the limits, we get 
\begin{align} \label{limit:2}
   \frac{1}{\Eps^\mu} 
     \int_0^t \!\! \int_\R \int_0^1 
       \frac{\phi_1(t, x) v_0 \vpran{q^+_\Eps - q^-_\Eps}}
       {N \alpha_0 a (1-a)}  \da\dx\dt 
\to 
  \frac{N\alpha_0}{4}v_0^2 G_\mu \vpran{\frac{1}{Z(a)}}' \Big|_{a=1/2}  \int_0^t \int_\R  
        \phi_1(t, x) \rho_0(t, x) \dx\dt \,.
\end{align}
The weak formulation for $q_\Eps$ is
\begin{align} \label{eq:zero-moment-3}
&  \int_\R \int_0^1 
   \frac{\phi_3(t, x) \vpran{q^+_\Eps + q^-_\Eps}}
   {N \alpha_0 a (1-a)} \da \dx \dt
   - \int_\R \int_0^1 
   \frac{\phi_3(0, x) \vpran{q^+_\Eps(0, x, a) + q^-_\Eps(0, x, a)}}
   {N \alpha_0 a (1-a)} \da \dx \dt \nn
\\
&  - \int_0^t \int_\R \int_0^1 
   \frac{\del_t\phi_3(t, x) \vpran{q^+_\Eps + q^-_\Eps}}
   {N \alpha_0 a (1-a)} \da \dx \dt
  - \frac{1}{\Eps^\mu} \int_0^t \int_\R \int_0^1
      \frac{\del_x\phi_3(t, x) v_0 \vpran{q^+_\Eps - q^-_\Eps}}
   {N \alpha_0 a (1-a)} \da \dx \dt  
\\
&   = 0 \,. \nn
\end{align}
for $\phi_3 \in C^\infty_c(\R^+ \times \R)$. Take $\phi_1 = \del_x \phi_3$ in~\eqref{limit:2} pass to the limit in~\eqref{eq:zero-moment-3} then gives rise to the weak formulation of~\eqref{eq:macro-2} which reads
\begin{align*}
&  \int_\R \int_0^1 
   \frac{\phi_3(t, x) \vpran{q^+_\Eps + q^-_\Eps}}
   {N \alpha_0 a (1-a)} \da \dx 
   - \int_\R \int_0^1 
   \frac{\phi_3(0, x) \vpran{q^+_\Eps(0, x, a) + q^-_\Eps(0, x, a)}}
   {N \alpha_0 a (1-a)} \da \dx \nn
\\
&  - \int_0^t \int_\R \int_0^1 
   \frac{\del_t\phi_3(\tau, x) \vpran{q^+_\Eps + q^-_\Eps}}
   {N \alpha_0 a (1-a)} \da \dx \dtau
  -  \frac{N\alpha_0}{4}v_0^2 G_\mu \vpran{\frac{1}{Z(a)}}' \Big|_{a=1/2}  \int_0^t \int_\R  
        \del_x \phi_3(\tau, x) \rho_0(t, x) \dx\dtau
\\
&   = 0 \,.    
\end{align*}

\medskip
\Ni (b) The only difference in Case IV is that when $\mu = 1$, the fourth term on the left of~\eqref{eq:first-moment-3} survives and satisfies the limit
\begin{align} \label{limit:3}
  - \int_0^t \int_\R \int_0^1
      \frac{\del_x\phi_1(t, x) v_0^2 \vpran{q^+_\Eps + q^-_\Eps}}
   {Z(a) N \alpha_0 a (1-a)} \da \dx
\to 
  - \frac{v_0^2}{Z(1/2)} \int_0^t \int_\R
      \del_x\phi_1(t, x) \rho_0(t, x) \dx \dt
\end{align}
Therefore, in Case IV we have 
\begin{align} \label{limit:4}
& \quad \,
   \frac{1}{\Eps^\mu} 
     \int_0^t \int_\R \int_0^1 
       \frac{\phi_1(t, x) v_0^2 \vpran{q^+_\Eps - q^-_\Eps}}
       {N \alpha_0 a(1-a)}  \da\dx\dt  \nn
\\
&\to 
  - \frac{v_0^2}{Z(1/2)} \int_0^t \int_\R
      \del_x\phi_1(t, x) \rho_0(t, x) \dx \dt
  + \frac{N\alpha_0}{4}v_0^2 G_\mu \vpran{\frac{1}{Z(a)}}' \Big|_{a=1/2}  \int_0^t \int_\R  
        \phi_1(t, x) \rho_0(t, x) \dx\dt \,.
\end{align}
This gives~\eqref{eq:KS} in its weak formulation which reads
\begin{align*}
&  \int_\R \int_0^1 
   \frac{\phi_3(t, x) \vpran{q^+_\Eps + q^-_\Eps}}
   {N \alpha_0 a (1-a)} \da \dx 
   - \int_\R \int_0^1 
   \frac{\phi_3(0, x) \vpran{q^+_\Eps(0, x, a) + q^-_\Eps(0, x, a)}}
   {N \alpha_0 a (1-a)} \da \dx \nn
\\
&  - \int_0^t \int_\R \int_0^1 
   \frac{\del_t\phi_3(\tau, x) \vpran{q^+_\Eps + q^-_\Eps}}
   {N \alpha_0 a (1-a)} \da \dx \dtau
  -  \frac{N\alpha_0}{4}v_0^2 G_\mu \vpran{\frac{1}{Z(a)}}' \Big|_{a=1/2}  \int_0^t \int_\R  
        \del_x \phi_3(\tau, x) \rho_0(t, x) \dx\dtau
\\
&  + \frac{v_0^2}{Z(1/2)} \int_0^t \int_\R
      \del_{xx} \phi_3(t, x) \rho_0(t, x) \dx \dt  = 0 \,.
\end{align*}
We thereby finish the proof of the limits. 
\end{proof}

\section{Conclusion}
We derive advection and advection-diffusion macroscopic models for E.coli chemotaxis that match quantitatively with the agent-based model in the exponential environment with large gradients. The derivation is based on the parabolic or hyperbolic scalings of the kinetic-transport equation that couples the internal signal pathway. The scaling that we have
considered indicates that the time scale of the population level movement is longer than the individual bacteria adaptation and movement.

When $G$ is small, the drift velocity in Case II is proportional to $G$, which gives the logarithm sensing.
However, the  diffusion in the limiting macroscopic model of 
Case I is one order less than the advection, while the drift velocity does not linearly depend on $G$. This shows when $G$ becomes large, the logarithm sensing is no longer valid. Yet, we can give the drift velocity analytically thanks to the simple form of the adaptation rate.

It is well known that, when the Keller-Segel equation is coupled with an elliptic or parabolic equation for the chemical signal $S$ 
blowup may happen in finite time.
 Our results provide a possible mechanism to prevent the blowup phenomenon in the Keller-Segel model. It has been shown \cite{BCM,BDP} that if $\phi(\nabla S)=\nabla S$ and the initial mass goes beyond a critical level, then the Keller-Segel model exhibits nonphysical blowups in high dimensions. Various strategies have been proposed mathematically and biologically to prevent this nonphysical blow up \cite{CC,CW, HJ}. Some efforts have also been devoted to study the dynamics of the solution after the blowup in the sense of measures \cite{JV1,FV}. One of the biologically relevant assumptions is the "volume filling" effect, which takes into account that the bacteria do not want to jump to the place where the population is too crowded \cite{WH,CC}. However, this assumption is still phenomenologically.  We observe numerically that if we further increase the chemical gradient, the average drift velocity decreases to a constant. This suggests that one physical way to prevent the blowup in the Keller-Segel model is to choose the dependence of the advection on the signal gradient  $\phi(\nabla S)$ such that $\phi(u)$ increases with $u$ to a maximum value and then decreases to a constant as $\abs{u}$ further increases.  
    
The numerical results in this paper show that our analytical results match quantitatively with the agent-based model. We thus provide an answer to the question of how to determine the population level drift velocity from the molecular mechanisms of chemotaxis for all range of chemical gradient, at least in the exponential environment. We focus on the exponential environment in the present paper. One interesting question is: what is the general type of chemical signalling environment where the parabolic or hyperbolic scaling can be valid. To address this question more tests have to be done. This is left for our future investigation.

\begin{figure} 
 \centering{
 \subfigure[]{\includegraphics[width=7cm]{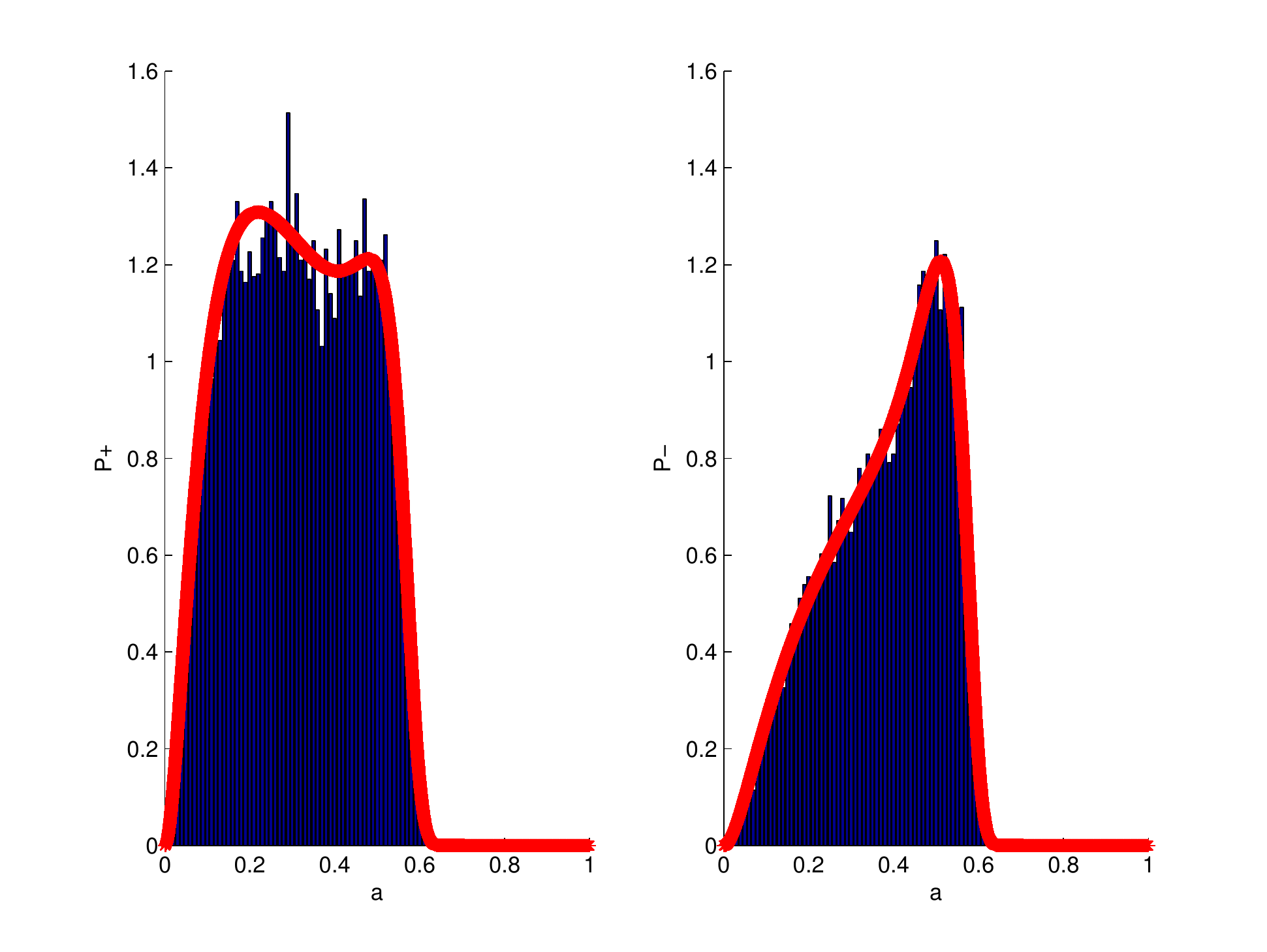}}
 \subfigure[] {\includegraphics[width=7cm]{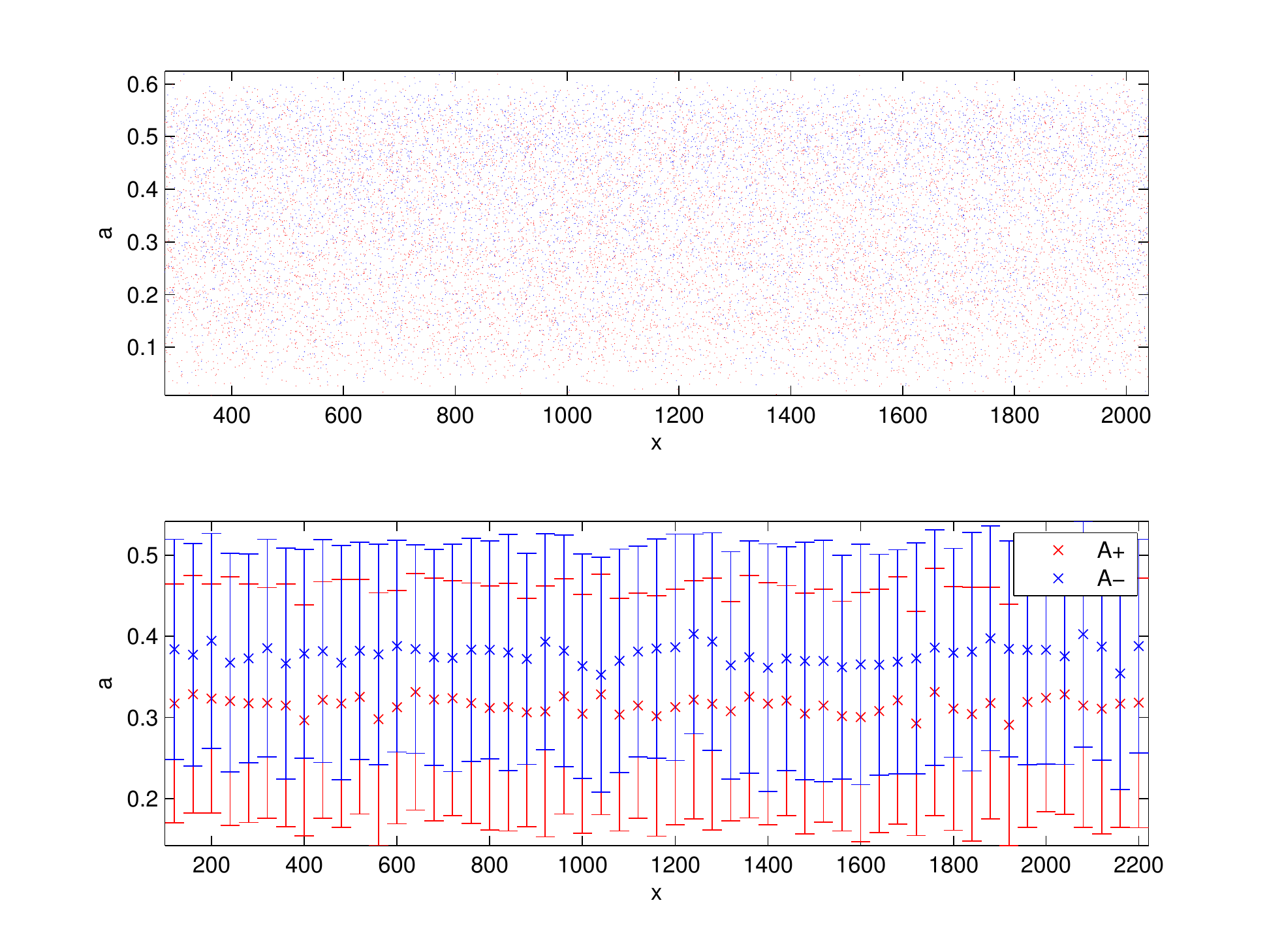}}
 \subfigure[] {\includegraphics[width=7cm]{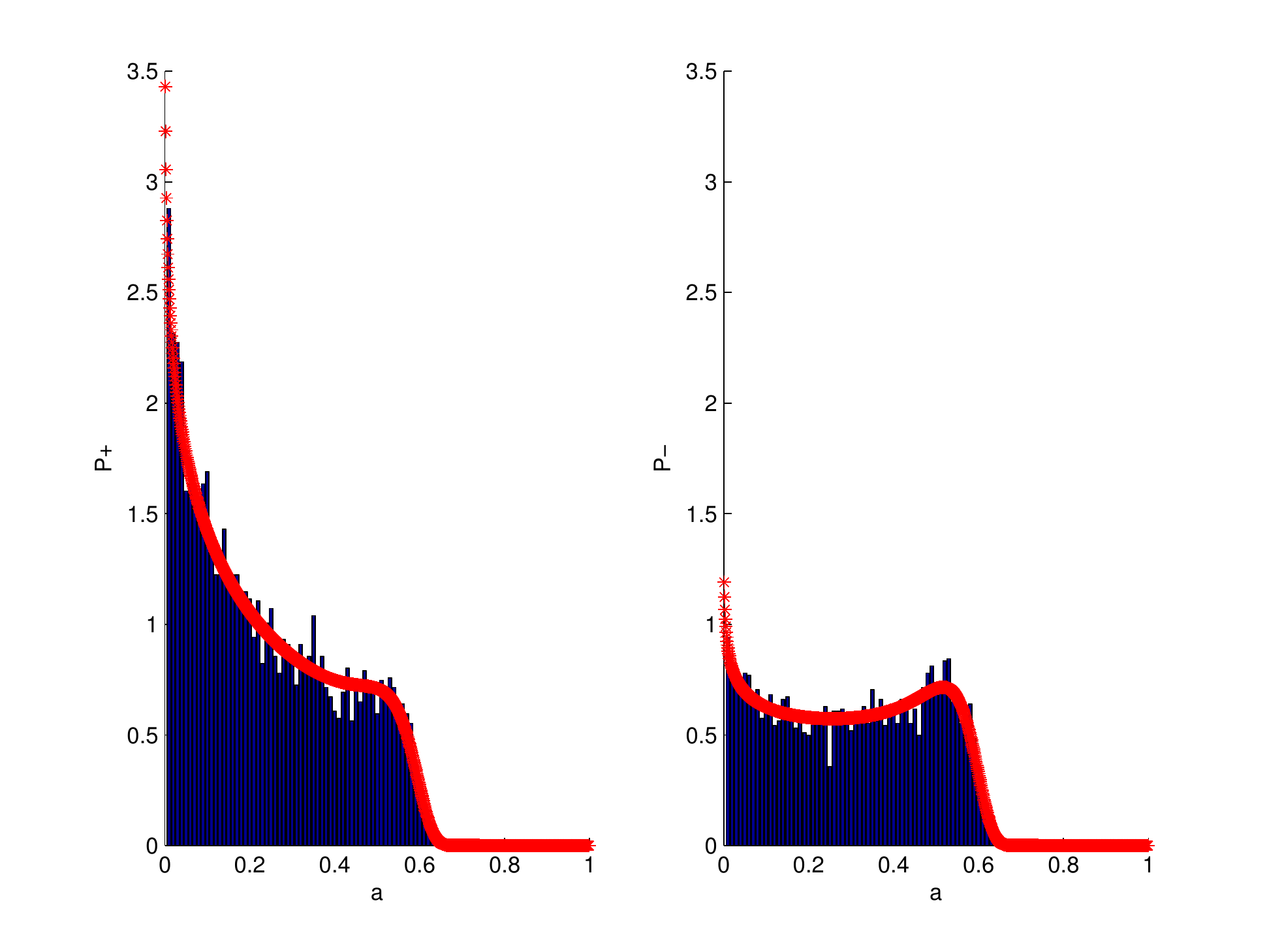}}
 \subfigure[] {\includegraphics[width=7cm]{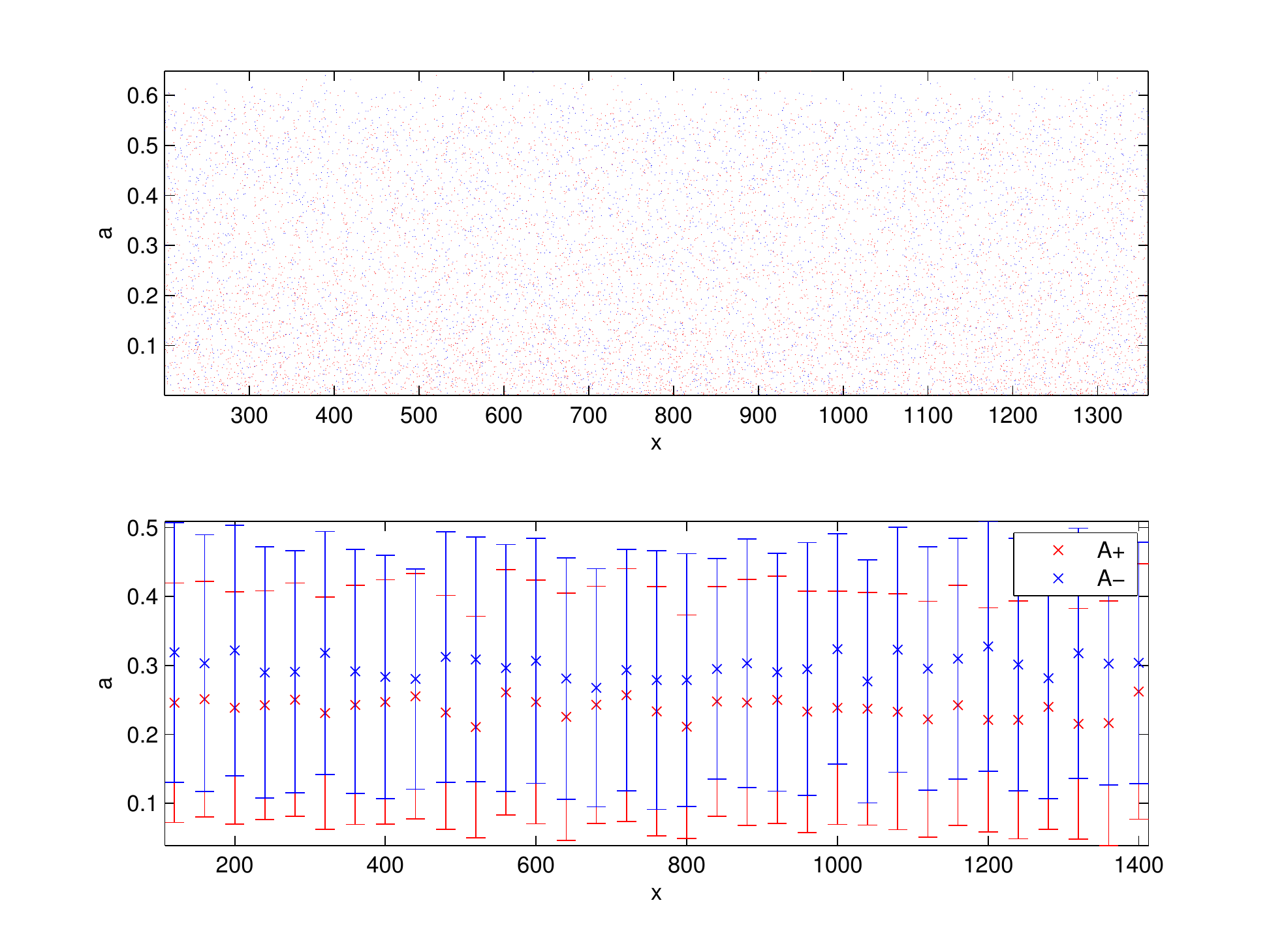}}
  \subfigure[] {  \includegraphics[width=7cm]{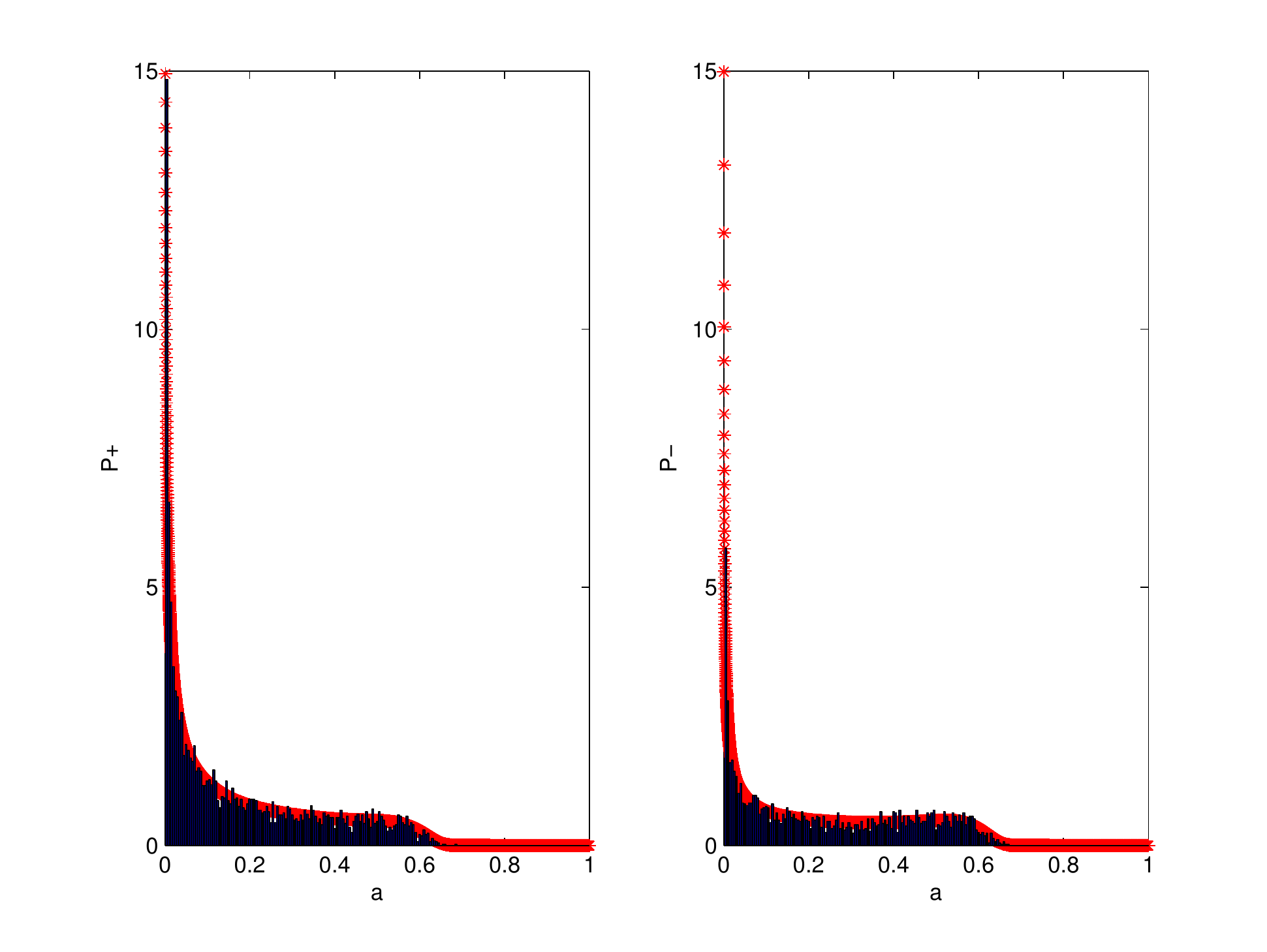}}
 \subfigure[]  {\includegraphics[width=7cm]{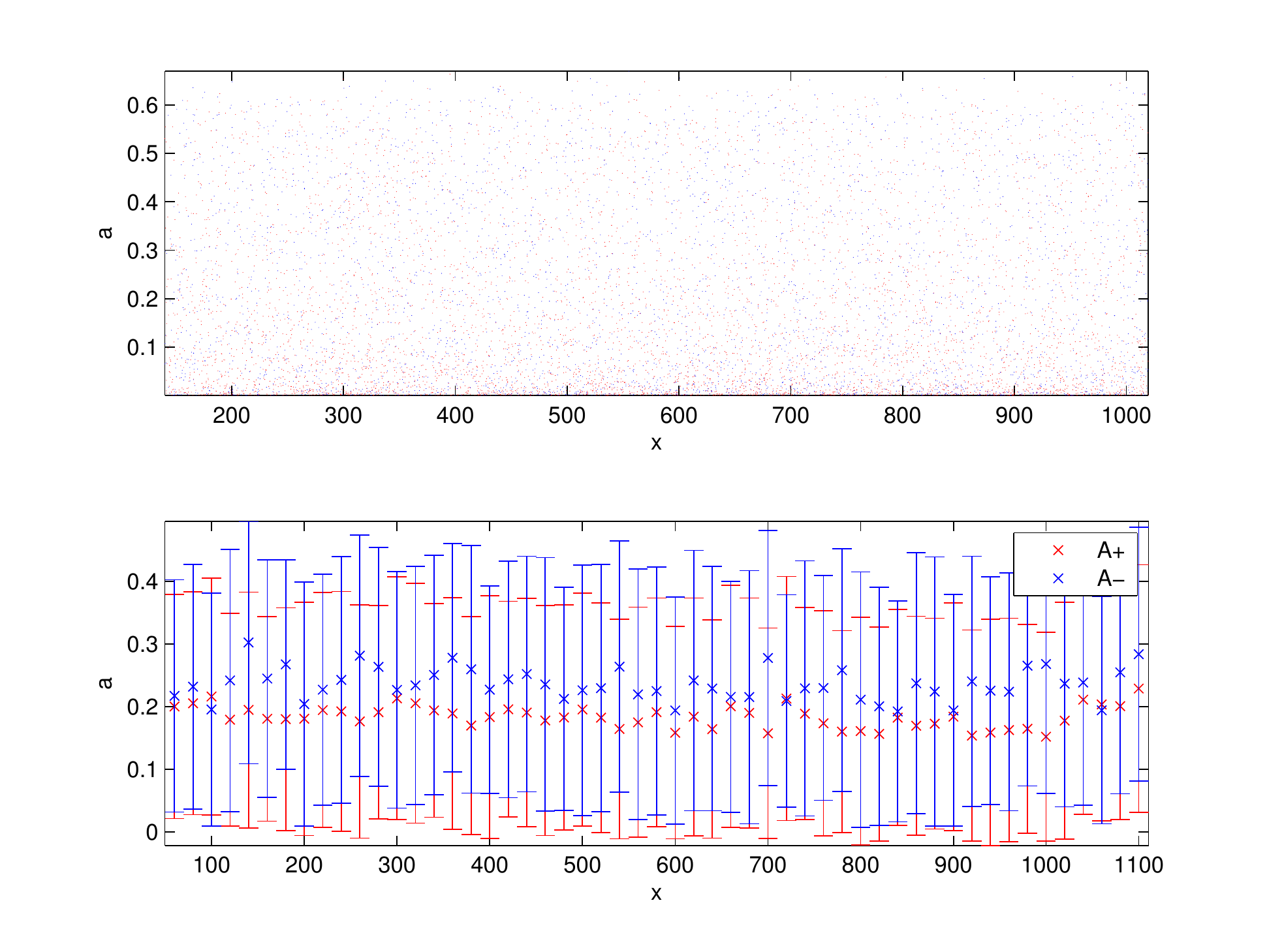}}
  } 
  \caption{The distribution of $\frac{q_0^+}{N \alpha_0 a (1-a)}$ (left) and $\frac{q_0^-}{N \alpha_0 a (1-a)}$ (right) in $a$ for different $G$. a) b): $G=1*10^{-3}$; c) d) : $G=1.5*10^{-3}$; e) f): $G=2*10^{-3}$ when $k_R = 0.005\mu m^{-1}$. These values all correspond to Case I with $g > 1$.
  a) c) e): The comparison of the distribution in $a$ for forward moving bacteria (left) and backward moving bacteria (right). The bars are the results from SPECS and the solid lines are from the analytical formula in \eqref{soln:q-0-plus-1}-\eqref{soln:q-0-minus-1}. b) d) f): The distribution of $a$ obtained by SPECS: red is for the forward moving bacteria and blue for the backward. The top subplots display the space distribution of bacteria with $a$ being the vertical axis. The bottom subplots give the mean and variance of $a$ at different positions. } \label{fig:case12}
\end{figure}

\begin{figure} 
 \centering{
 \subfigure[]{\includegraphics[width=7cm]{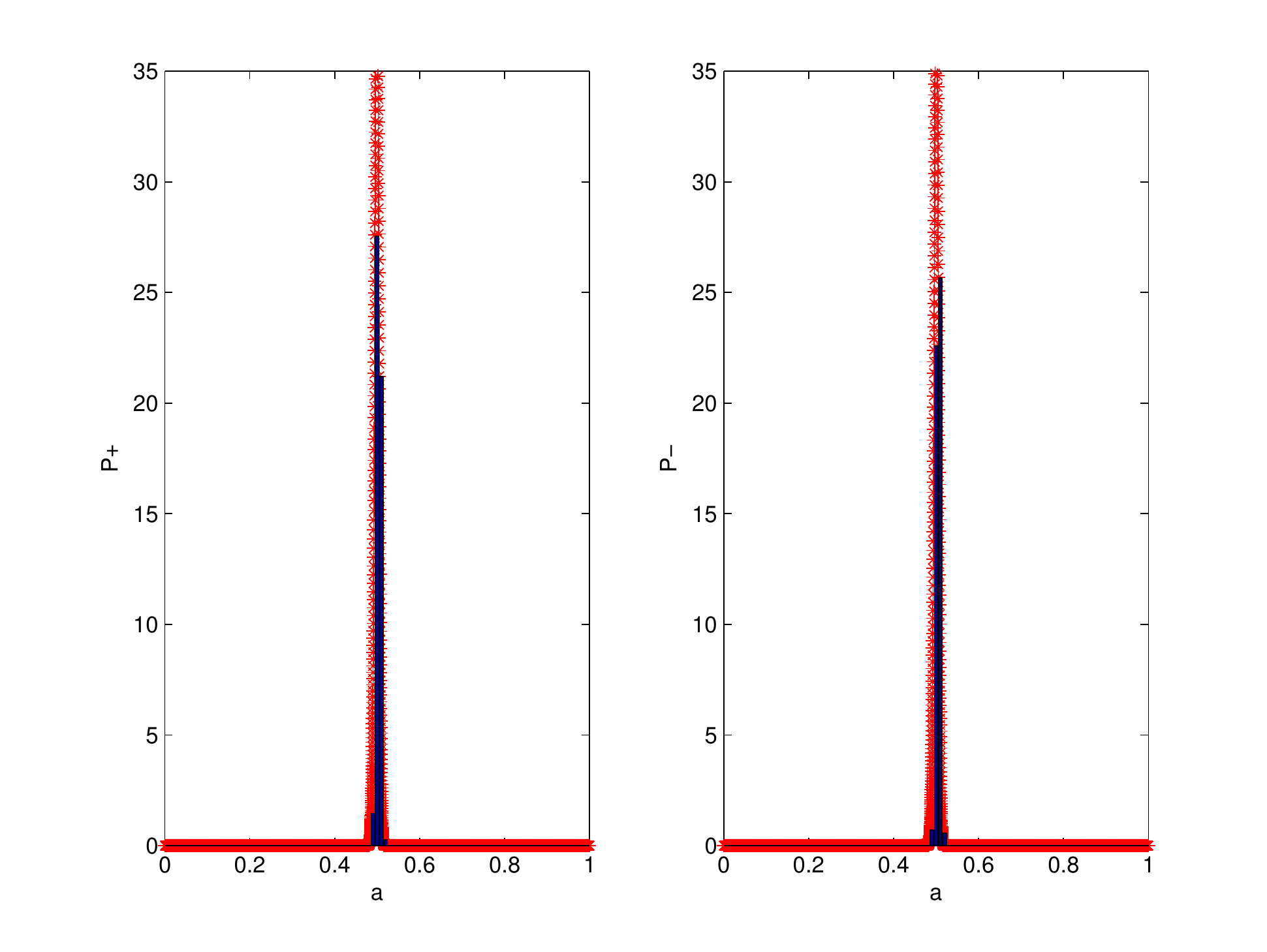}}
  \subfigure[]{ \includegraphics[width=7cm]{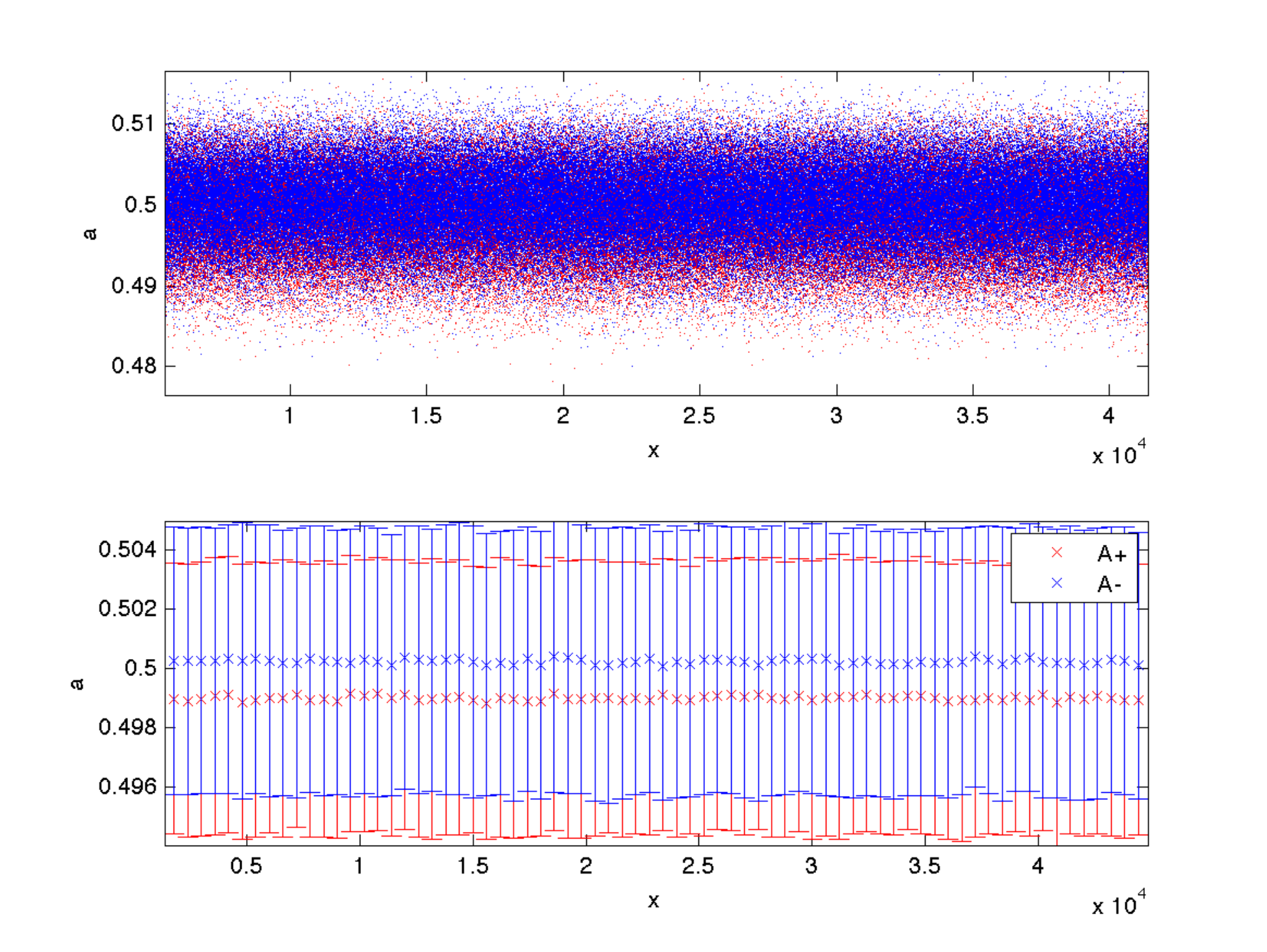}}
  \subfigure[]{ \includegraphics[width=7cm]{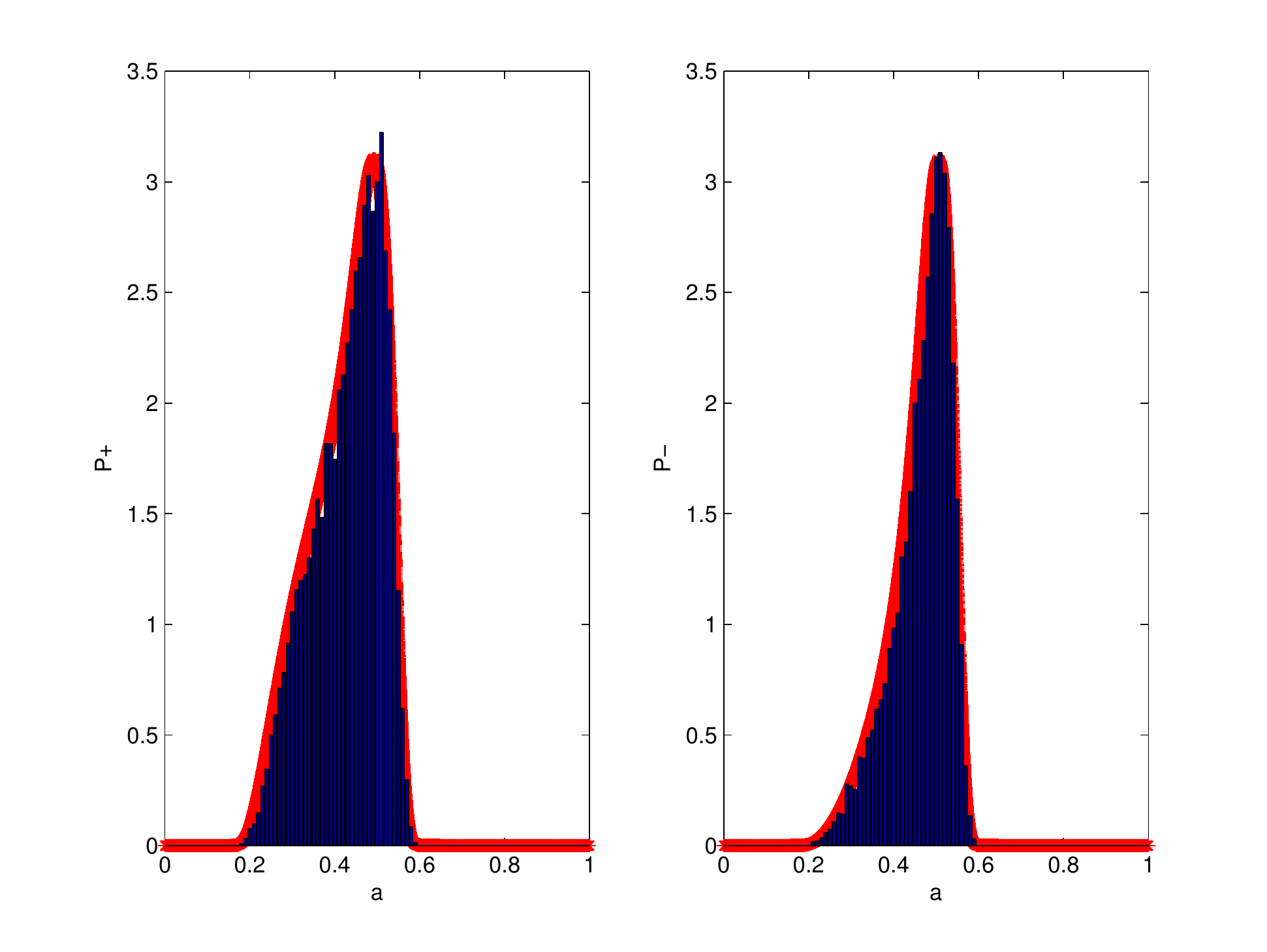}}
   \subfigure[]{ \includegraphics[width=7cm]{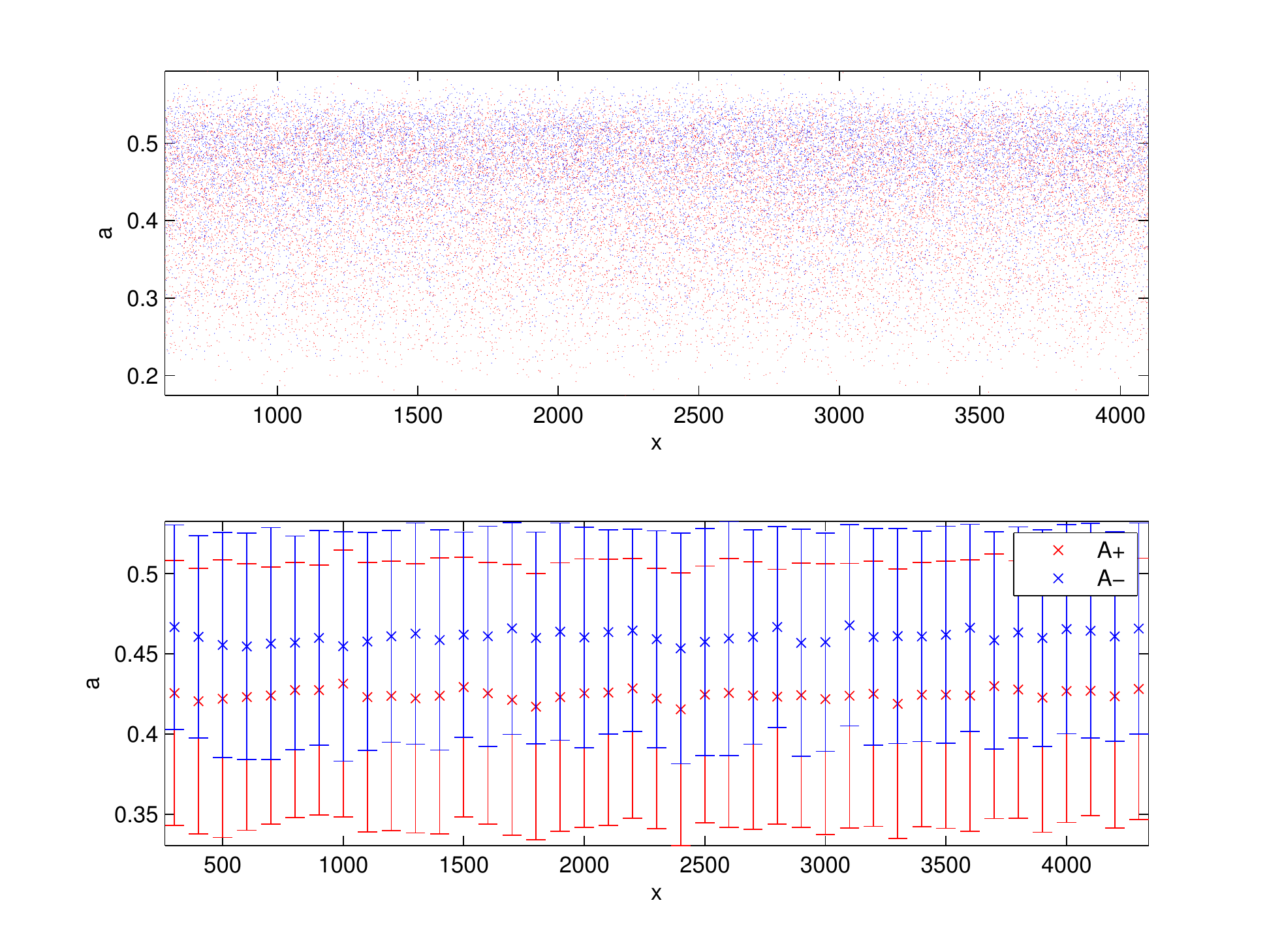}}
   }
\caption{The distribution of $\frac{q_0^+}{N \alpha_0 a (1-a)}$(left) and $\frac{q_0^-}{N \alpha_0 a (1-a)}$ (right) in $a$ for different $G$'s when  $k_R = 0.005\mu m^{-1}$. a) b): $G=5*10^{-5}$ (Case II with $0 < \mu <1$); c) d) : $G=5*10^{-4}$ (Case I with $0 < g < 1$).
  a) c): The comparison of the distribution in $a$ for forward moving bacteria (left) and backward moving bacteria (right). The bars are the results from SPECS and the solid lines are from the analytical formula in \eqref{soln:q-0-plus-1}-\eqref{soln:q-0-minus-1}. b) d): The distribution of $a$ obtained by SPECS: red is for the forward moving bacteria and blue for the backward. The top subplots display the space distribution of bacteria with $a$ being the vertical axis. The bottom subplots give the mean and variance of $a$ at different positions. } \label{fig:case34}
\end{figure}

\begin{figure}
 \centering{
  \subfigure[]{\includegraphics[width=5cm]{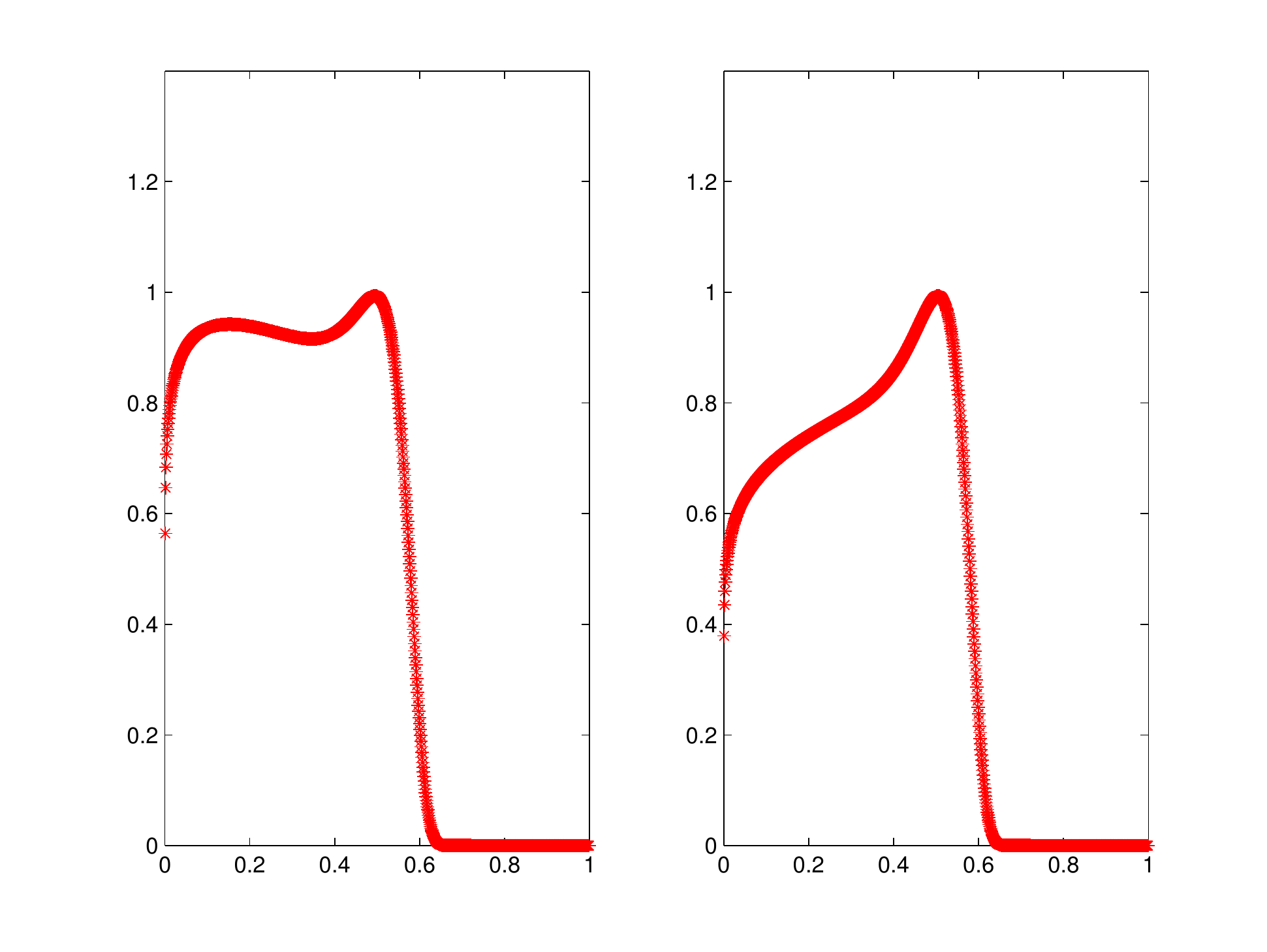}}
  \subfigure[]{ \includegraphics[width=5cm]{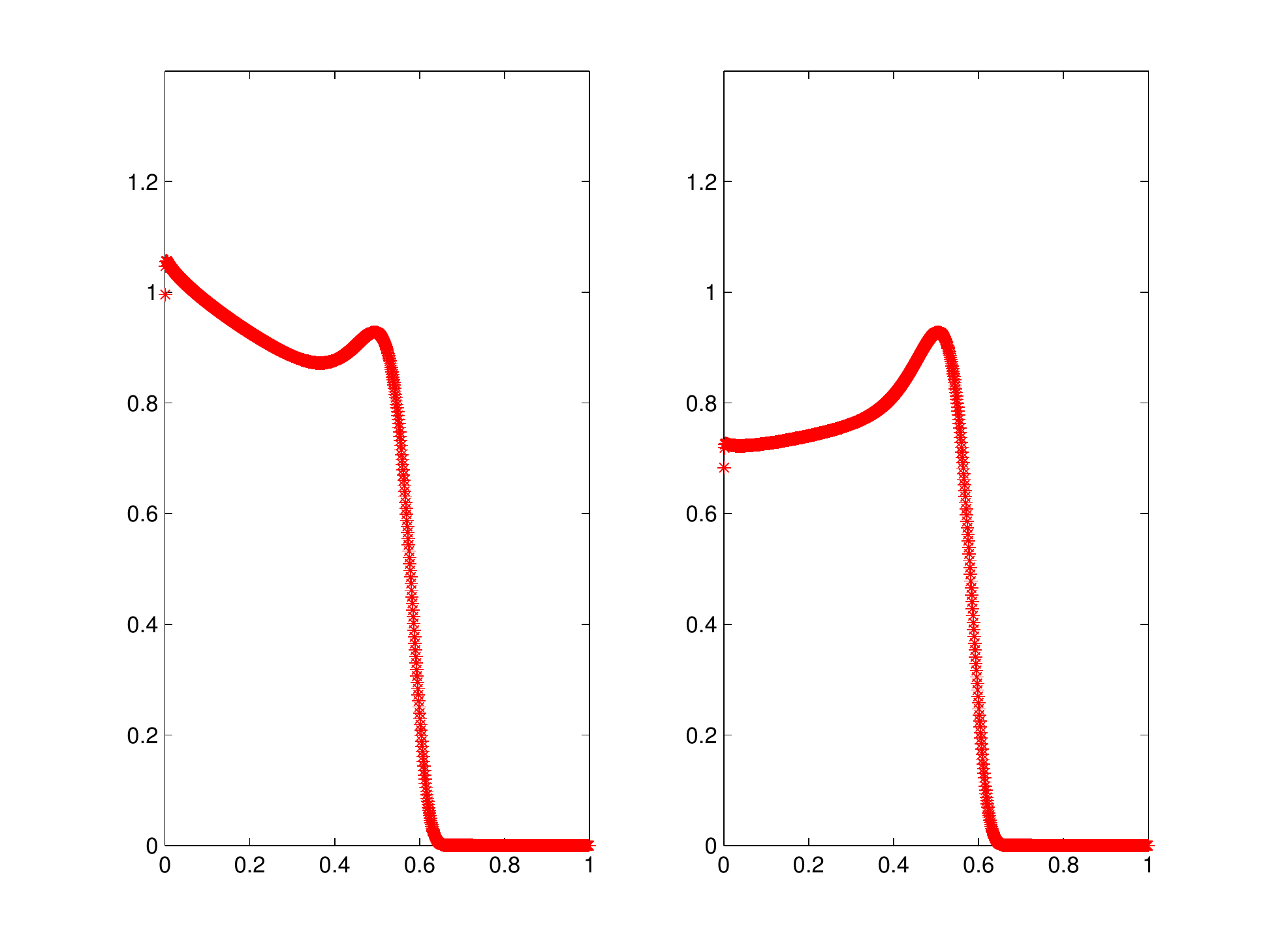}}
   \subfigure[]{\includegraphics[width=5cm]{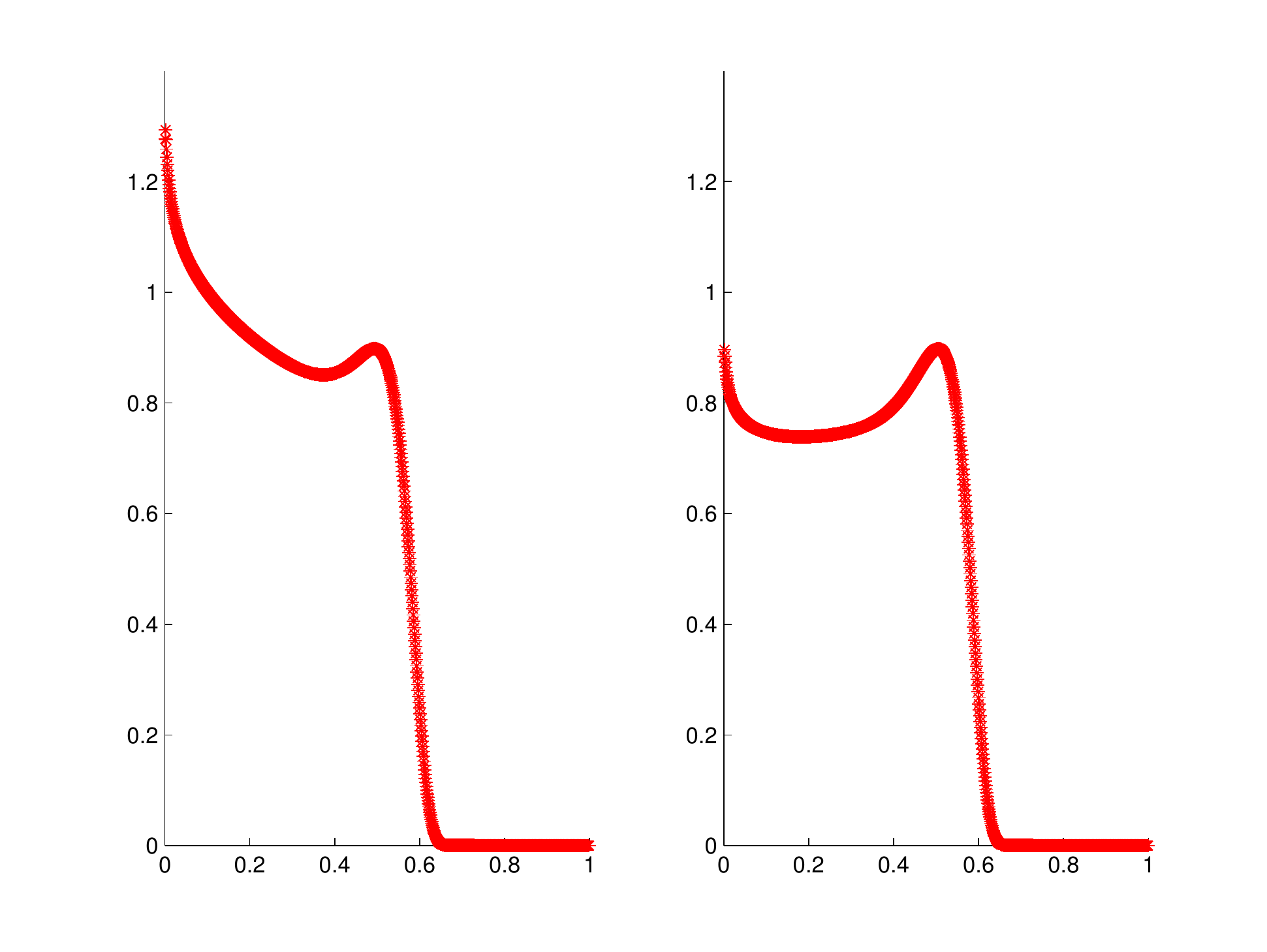}}
  } \caption{Each figure gives the distribution in $a$ for forward moving bacteria (left) and backward moving bacteria (right) when $k_R=0.0005s^{-1}$. Left: $G=3.7*10^{-4}$, $\theta_0=1.1072$; Middle: $G=3.9*10^{-4}$, $\theta_0=0.9925$; Right: $G=4.0*10^{-4}$, $\theta_0=0.9419$. All these values correspond to Case I with $g>1$.}\label{fig:disa0}
\end{figure}

\begin{figure}
 \centering{
  \subfigure[]{\includegraphics[width=7cm]{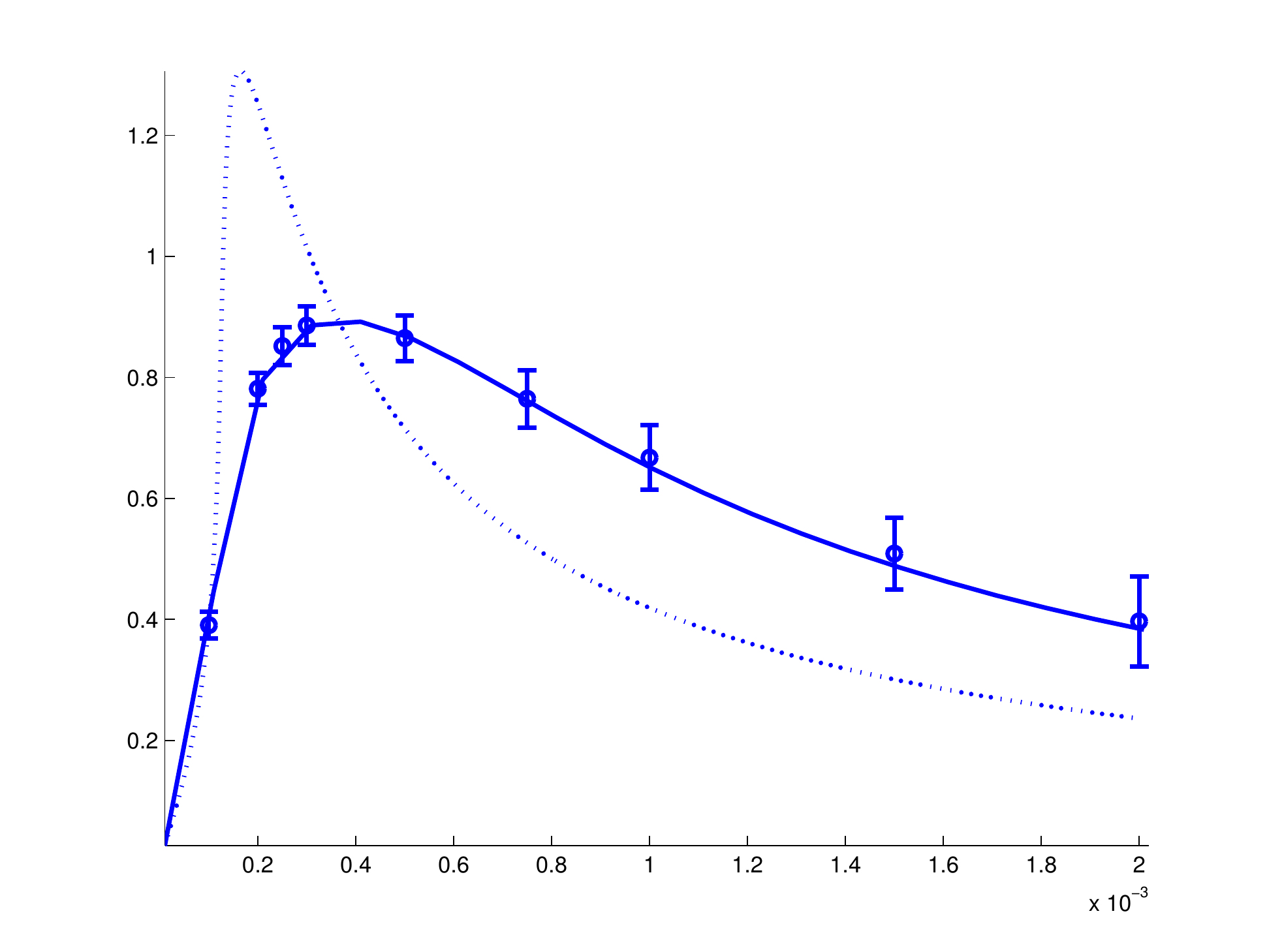}}
  \subfigure[]{ \includegraphics[width=7cm]{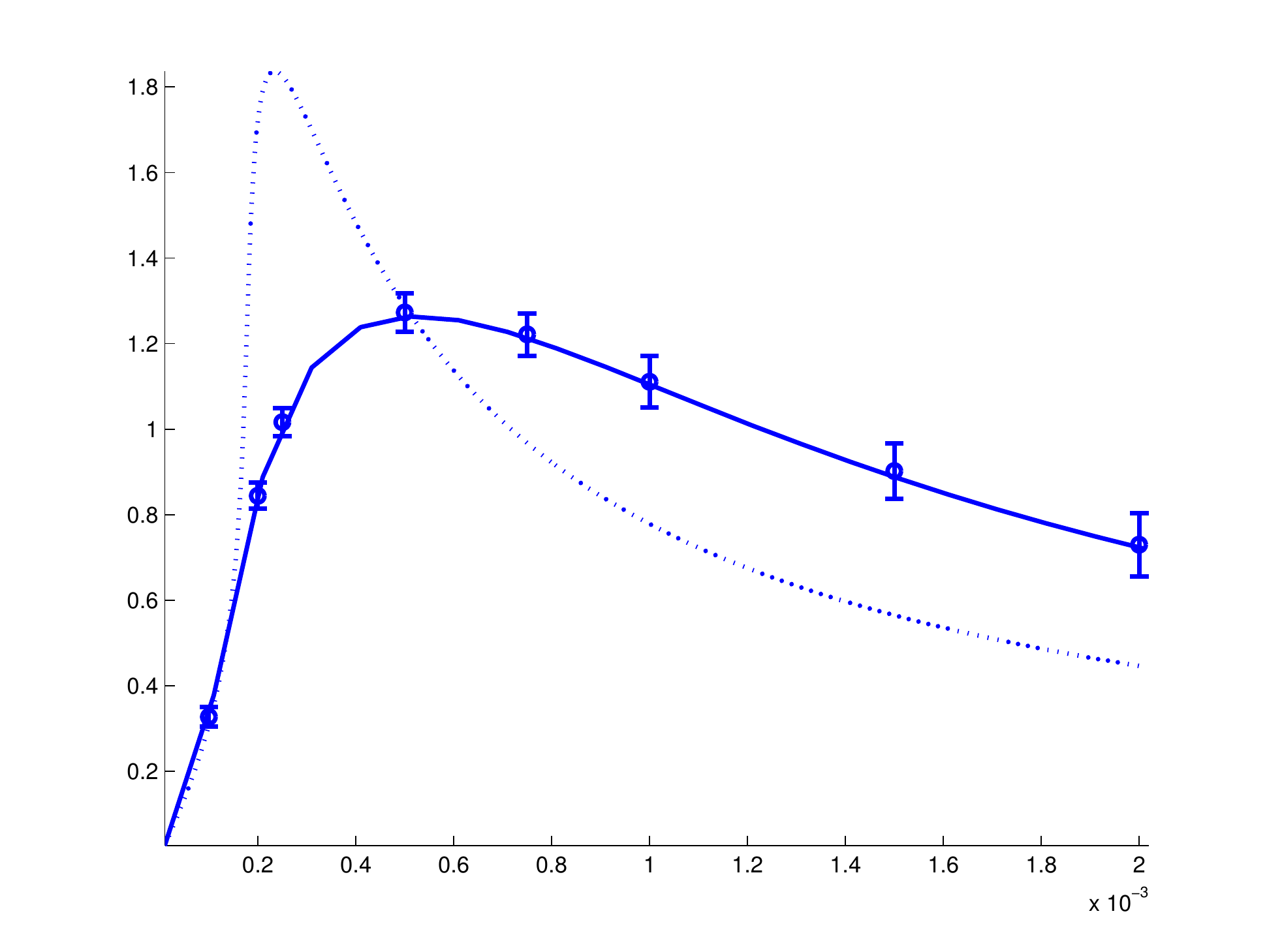}}
  \subfigure[]{\includegraphics[width=7cm]{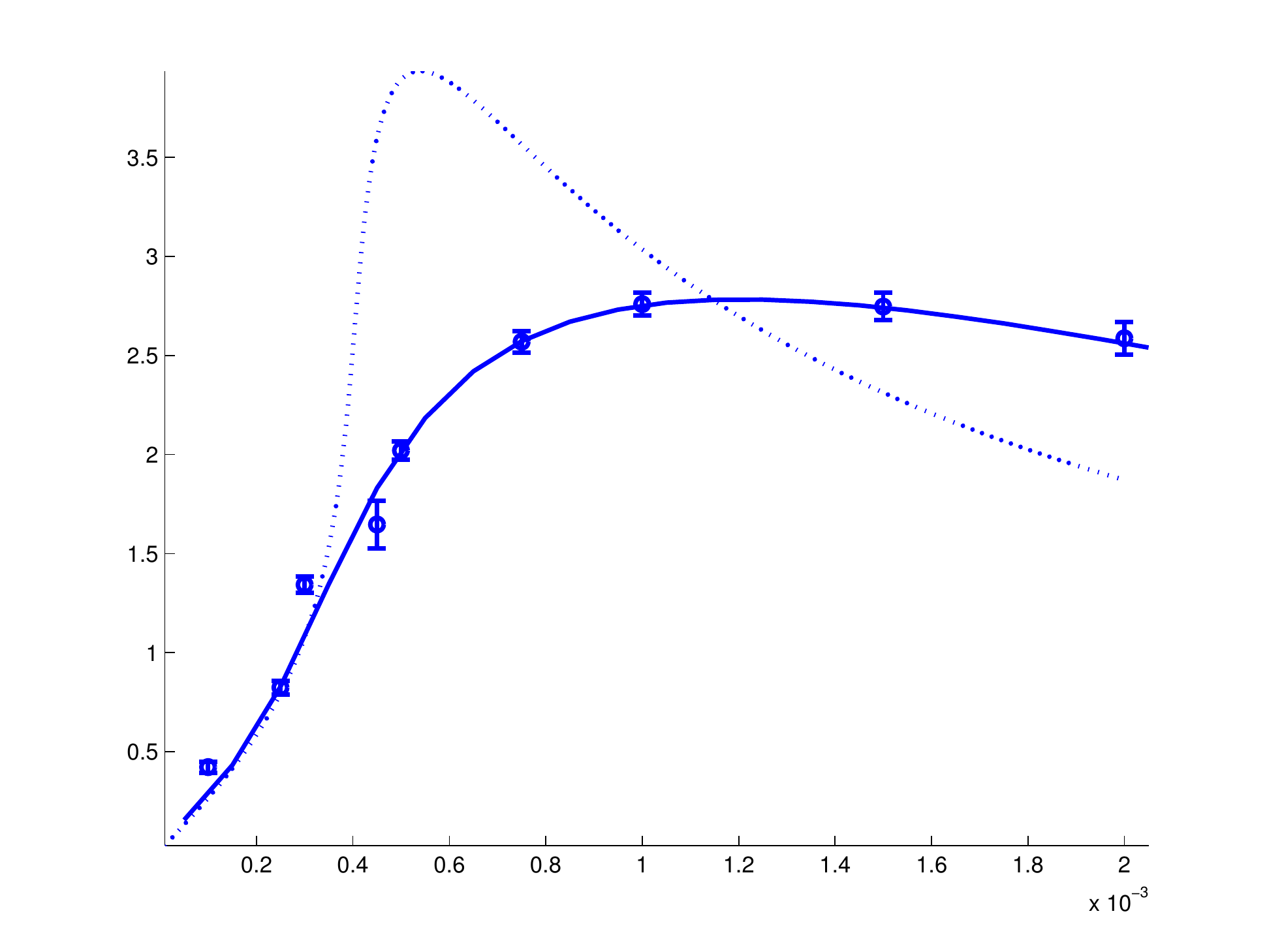}}
  \subfigure[]{\includegraphics[width=7cm]{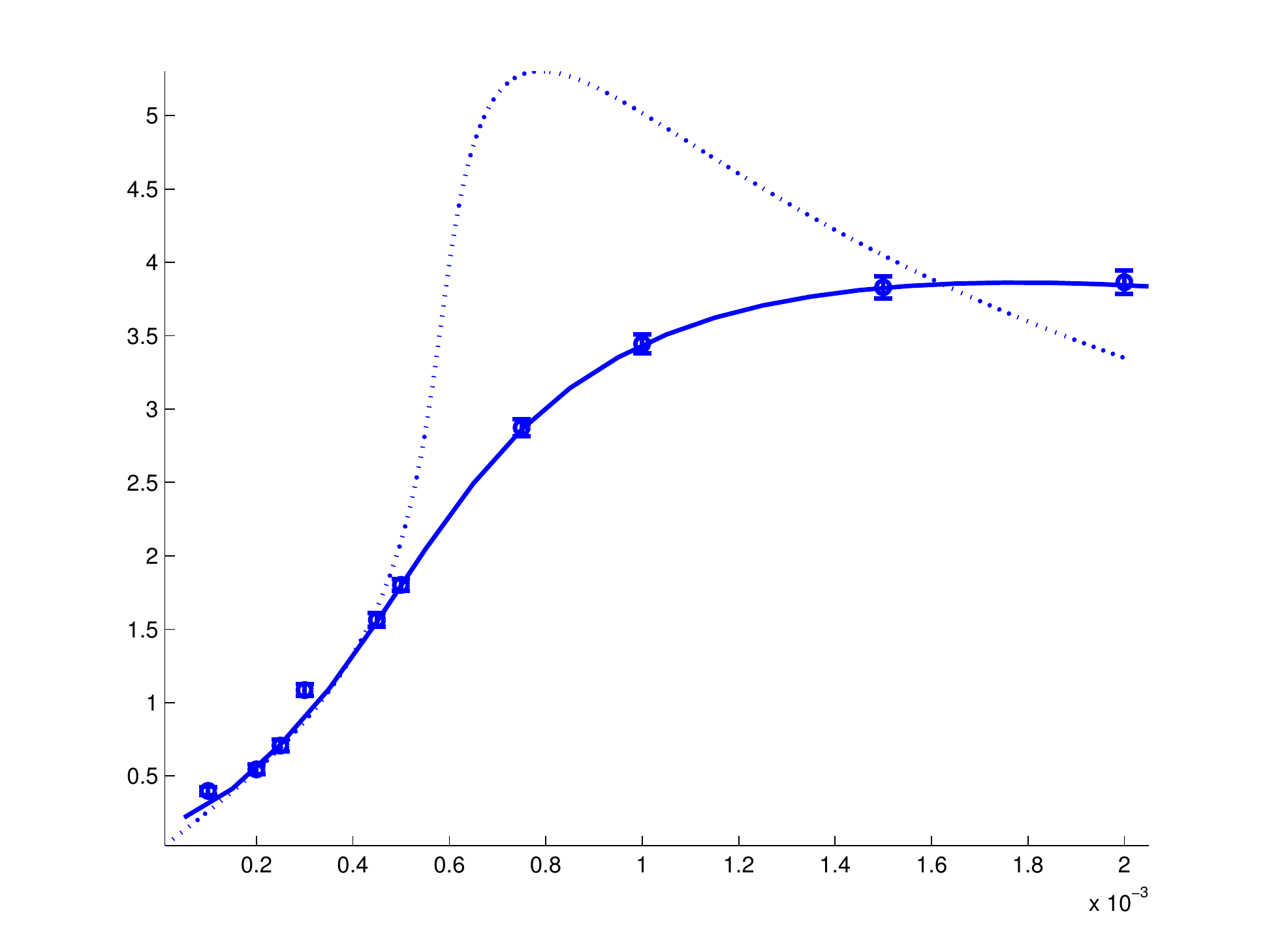}}
  } \caption{ Average chemotaxis velocity in exponential concentration gradient for different $k_R$. a): $k_R=0.0005s^{-1}$; b): $k_R=0.001s^{-1}$; c): $k_R=0.005s^{-1}$; d): $k_R=0.01s^{-1}$. Here the solid lines are calculated from the analytical formula \eqref{eq:kappa1}, the error bars with circles are the results by SPECS simulations and the dotted lines are the prediction by PBMFT.}\label{fig:vd}
\end{figure}



\end{document}